\newtheorem{theorem}{Theorem}[section]
\newtheorem{corollary}{Corollary}[section]
\newtheorem{definition}{Definition}[section]
\newtheorem{example}{Example}[section]
\newtheorem{lemma}{Lemma}[section]
\newtheorem{proposition}{Proposition}[section]
\newtheorem{remark}{Remark}[section]
\newenvironment{proof}[1][Proof]{\noindent\textbf{#1.} }{\ \rule{0.5em}{0.5em}}
\begin{document}

\title{Resolvent metrics and heat kernel estimates}
\author{Andr\'{a}s Telcs \\
{\small Department of Computer Science and Information Theory, }\\
{\small Budapest University of Technology and Economics}\\
{\small Magyar tud\'{o}sok k\H{o}r\'{u}tja 2}\\
{\small H-1117 Budapest,}\\
{\small HUNGARY}\\
{\small \ telcs.szit.bme@gmail.com}}
\maketitle

\begin{abstract}
Resolvent metrics are generalization of the resistance metric and provide
unified treatment of heat kernel estimates of sub-Gaussian type under
minimal conditions.\ 
\end{abstract}

\tableofcontents

\section{Introduction}

Heat propagation is not only interesting on its own, but reflects the very
intrinsic structure of the space where it does take place. We gained such a
new insight by Kigami's resistance metric\cite{K1}. \ Unfortunately the use
of resistance metric is applicable only on recurrent spaces. \ In this paper
we eliminate this limitation and extend his notion to transient spaces, in
particular for a class of transient graphs.

In the course of study of heat propagation the analogy between results on
continuous and discrete spaces is utilized (see e.g. \cite{BBK}) and
switching between them become a powerful tool of the studies. That is why we
think that it is useful if we tackle, in the present paper, the technically
less involved random walk case and return to the measure metric space
version in a forthcoming paper.

Kigami's work and several other papers inspire the following questions.

For any given measure (Dirichlet) space is there a "good" metric in which:

\begin{itemize}
\item the elliptic Harnack inequality holds

\item a parabolic Harnack inequality holds (in conjunction with heat kernel
estimates)?
\end{itemize}

The presented results contribute to the answer of these questions.

We introduce the resolvent metric which is direct generalization of the
resistance metric and we make the following observations.\ 

\begin{itemize}
\item Under the resolvent metric the volume doubling property implies the
elliptic Harnack inequality.

\item Under the resolvent metric volume doubling property turns to be
equivalent to the parabolic Harnack inequality and two-sided heat kernel
estimates in a fully local sense (c.f. \cite{ln}).
\end{itemize}

\ The main results are given in Theorem \ref{tDUE},\ref{tUE},\ref{tNDLE} and %
\ref{tPH}. \ The paper concludes with examples.

The main result of the paper can be summarized as follows. \ We consider a
weighted graph $\left( \Gamma ,\mu \right) $ and a random walk on it. \ We
assume that for all $\mu _{x,y}>0$ we have $P\left( x,y\right) \geq p_{0}>0$
uniformly. \ We construct the resolvent metric $\rho $ and consider $B_{\rho
}\left( x,r\right) $, balls in $\rho ,$ their volume $V_{\rho }\left(
x,r\right) $ and define the scaling function $F\left( x,r\right) =\left(
r^{2}V_{\rho }\left( x,r\right) \right) ^{1/m}$ for a well chosen $m$. \
Denote $f\left( x,.\right) =F^{-1}\left( x,.\right) $ and $\widetilde{p}%
_{n}\left( x,y\right) =p_{n}\left( x,y\right) +p_{n+1}\left( x,y\right) $
the sum of the transition kernel. \ 

\begin{definition}
We define a set $W_{0}$ of scaling functions$,$ $F:\Gamma \times \left[
0,\infty \right] \rightarrow \mathbb{R}^{+}:$there is a $C>0$ such that for
all $x\in \Gamma ,r>0$%
\[
\frac{F\left( x,2r\right) }{F\left( x,r\right) }\leq C. 
\]
\end{definition}

\begin{theorem}
Volume doubling holds ($V_{\rho }\in W_{0}$) for $\mu $ with respect to $%
\rho $ if and only if there are $C>c>0,\beta >1,\delta >0$ and an $F\in
W_{0} $ such that for all $x,y\in \Gamma $ and $n>0$%
\[
p_{n}\left( x,y\right) \leq \frac{C}{V_{\rho }\left( x,f\left( x,n\right)
\right) }\exp \left[ -\left( \frac{F\left( x,r\right) }{n}\right) ^{\frac{1}{%
\beta -1}}\right] 
\]%
and for $\rho \left( x,y\right) \leq \delta f\left( x,n\right) $%
\[
\widetilde{p}_{n}\left( x,y\right) \geq \frac{C}{V_{\rho }\left( x,f\left(
x,n\right) \right) } 
\]%
hold.
\end{theorem}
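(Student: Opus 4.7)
The statement is an equivalence, so I would prove each direction separately. Both directions are already prepared by the component theorems announced in the introduction (Theorems~\ref{tDUE}, \ref{tUE}, \ref{tNDLE}, and \ref{tPH}), so the role of the proof is mainly to assemble them into a single two-sided statement.

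For the forward implication, assume $V_{\rho}\in W_{0}$. First invoke Theorem~\ref{tDUE} to extract the on-diagonal upper estimate $p_{n}(x,x)\leq C/V_{\rho}(x,f(x,n))$ from volume doubling under the resolvent metric. Feed this into Theorem~\ref{tUE} to promote it to the full sub-Gaussian off-diagonal upper bound displayed in the statement; the passage from on-diagonal to off-diagonal is a Davies--Gaffney / Carne--Varopoulos style argument in the metric $\rho$, with the exponent $\beta$ entering through the scaling function $F$. The near-diagonal lower bound $\widetilde{p}_{n}(x,y)\geq c/V_{\rho}(x,f(x,n))$ for $\rho(x,y)\leq \delta f(x,n)$ is then delivered by Theorem~\ref{tNDLE}; the latter typically combines a diagonal lower bound (obtained from the diagonal upper bound together with conservativeness and Cauchy--Schwarz) with the elliptic Harnack inequality, which, as emphasized in the introduction, is precisely one of the consequences of $V_{\rho}\in W_{0}$.

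For the reverse implication, integrate the near-diagonal lower bound over the ball $B_{\rho}(x,\delta f(x,n))$ against $\mu$. Using the sub-stochasticity $\sum_{y}\mu_{y}\widetilde{p}_{n}(x,y)\leq 2$, one obtains
\[
V_{\rho}(x,\delta f(x,n))\geq c\,V_{\rho}(x,f(x,n)),
\]
a reverse-doubling inequality at the resolvent scales $f(x,n)$. Because $F\in W_{0}$, the consecutive scales $f(x,n)$ and $f(x,2n)$ are comparable, and a short interpolation using monotonicity of $r\mapsto V_{\rho}(x,r)$ promotes this into $V_{\rho}\in W_{0}$ at every scale $r$.

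The hardest component is of course inside the forward direction, namely the off-diagonal upper bound and the near-diagonal lower bound under only a resolvent-metric form of volume doubling, where the metric is defined via capacities rather than through a length structure; but those difficulties are absorbed into Theorems~\ref{tUE} and \ref{tNDLE}. Within the present assembly argument the most delicate bookkeeping is the fully local character of the estimates: both the upper and the lower bound involve $V_{\rho}(x,f(x,n))$ centred at the same point $x$, and interchanging centres at the various stages (e.g., when passing from $y$ back to $x$ in Gaussian tails) requires consistent use of $V_{\rho}\in W_{0}$ and of $F\in W_{0}$ at each invocation.
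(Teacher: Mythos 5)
Your forward direction is essentially the paper's: the theorem in the introduction is a summary statement and the forward implication is exactly the assembly of Theorems~\ref{tDUE}, \ref{tUE} and \ref{tNDLE}. (A small inaccuracy in your gloss: the paper's proof of Theorem~\ref{tNDLE} does not pass through the elliptic Harnack inequality; it works directly with the energy inequality $\left\vert f(x)-f(y)\right\vert^{2}\leq R_{m}(x,y)\,\mathcal{E}_{m}(f,f)$ applied to $f_{n}=p_{n}(x,\cdot)$ together with Lemmas~\ref{pdiff}, \ref{Ledif} and $(DUE)_{F}$. But since you invoke \ref{tNDLE} as a black box, this is harmless.)

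The reverse direction, however, contains a genuine error. Integrating the near-diagonal lower bound over $B_{\rho}(x,\delta f(x,n))$ and using $\sum_{y}\widetilde{p}_{n}(x,y)\mu(y)\leq 2$ gives
\[
\frac{c\,V_{\rho}\bigl(x,\delta f(x,n)\bigr)}{V_{\rho}\bigl(x,f(x,n)\bigr)}\leq \sum_{y\in B_{\rho}(x,\delta f(x,n))}\widetilde{p}_{n}(x,y)\mu(y)\leq 2,
\]
i.e.\ $V_{\rho}(x,\delta f(x,n))\leq \tfrac{2}{c}\,V_{\rho}(x,f(x,n))$ --- an \emph{upper} bound on the volume of the smaller ball, not the lower bound $V_{\rho}(x,\delta f(x,n))\geq c\,V_{\rho}(x,f(x,n))$ you wrote. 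Since $\delta<1$ and $2/c\geq 1$, this inequality is vacuous: it follows already from monotonicity of $r\mapsto V_{\rho}(x,r)$ and carries no doubling information. The subsequent interpolation step therefore has nothing to work with. In the paper the implication from the two-sided estimates back to $(VD)_{\rho}$ is delegated to Theorem~\ref{tPH} (equivalence of $(UE)_{F}\wedge(NDLE)_{F}$ with $(VD)_{\rho}\wedge F\in W_{1}$ and with $(PH)_{F}$), whose proof is cited from \cite{ln} Theorem~12.1 and is substantially more involved than a single conservativeness estimate; it makes essential use of the off-diagonal decay and the resulting parabolic Harnack inequality. You should also be aware of a mismatch the paper itself flags: the introductory theorem is stated with $F\in W_{0}$, whereas Theorem~\ref{tPH} requires $F\in W_{1}$ (the lower regularity exponent $\beta'>1$), and the Remark after Definition~\ref{dpH} admits it is not clear how $\beta'>1$ follows from the assumptions; a complete proof of the stated equivalence would have to address this gap rather than silently pass from $W_{0}$ to $W_{1}$.
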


\section{Basic definitions}

We consider $\left( \Gamma ,\mu \right) ,$ weighted graph, $\Gamma $ is a
countable infinite set of vertexes and $\mu _{x,y}=\mu _{y,x}\geq 0$ a
symmetric weight. \ Edges are formed \ by the pairs for which $\mu _{x,y}>0.$
We assume that the graph is connected.\ These weights define a measure on
vertices:%
\[
\mu \left( x\right) =\sum_{y\in \Gamma }\mu _{x,y} 
\]%
and on sets $A\subset \Gamma $%
\[
\mu \left( A\right) =\sum_{z\in A}\mu \left( z\right) . 
\]%
Due to the connectedness $\mu \left( x\right) >0$ for all $x$. \ \ It is
natural to define the random walk on weighted graphs, which is a reversible
Markov chain given by the one-step transition probabilities: 
\[
P\left( x,y\right) =\frac{\mu _{x,y}}{\mu \left( x\right) }. 
\]%
In what follows we always assume the condition $\left( p_{0}\right) $: there
is a constant $p_{0}>$ $0$ such that for all $x,y$ with $\mu _{x,y}>0$ 
\[
P\left( x,y\right) \geq p_{0} 
\]%
holds. \ One can define the transition operator $P$ on $c_{0}\left( \Gamma
\right) $ functions $Pf\left( x\right) =\sum P\left( x,y\right) f\left(
y\right) .$ \ The inner product for $c_{0}\left( \Gamma ,\mu \right) $ is
defined by $\left( f,g\right) =\left( f,g\right) _{c_{0}\left( \Gamma
\right) ,\mu }=\sum_{x}f\left( x\right) g\left( x\right) \mu \left( x\right)
.$

If $\rho $ is a metric, balls are defined with respect to it by 
\[
\widehat{B}_{\rho }\left( x,r\right) =\left\{ y:\rho \left( x,y\right)
<r\right\} 
\]%
Denote $B=B_{\rho }\left( x,r\right) $ the connected component of $\widehat{B%
}_{\rho }\left( x,r\right) $ containing $x.$ The volume of the connected
part $B$ is denote by $V_{\rho }\left( x,r\right) =\mu \left( B_{\rho
}\left( x,r\right) \right) $.

\begin{definition}
We say that the volume doubling property, $\left( VD\right) _{\rho }$ holds
if there is a $C_{\rho }>0$ constant such that for all $x\in \Gamma ,r>0$ 
\[
\frac{V_{\rho }\left( x,2r\right) }{V_{\rho }\left( x,r\right) }\leq C_{\rho
}. 
\]
\end{definition}

\section{The resolvent metric}

In case of recurrent spaces Kigami's observation is\ that the effective
resistance between two vertices $R\left( x,y\right) $ is metric. The
existence of the resistance metric has a particular consequence that, for
any $f$ in the domain of the Dirichlet form $\mathcal{E}$ 
\begin{equation}
\left\vert f\left( x\right) -f\left( y\right) \right\vert ^{2}\leq R\left(
x,y\right) \mathcal{E}\left( f,f\right) .  \label{key1}
\end{equation}%
If the volume of balls $V_{R}\left( x,r\right) $ with respect to the metric $%
R$ satisfies the doubling \ condition the following important estimate holds%
\[
R\left( x,B_{R}^{c}\left( x,r\right) \right) \asymp r. 
\]%
In particular 
\begin{equation}
R\left( x,B_{R}^{c}\left( x,r\right) \right) \geq cr  \label{rr>r}
\end{equation}%
while $R\left( x,B_{R}^{c}\left( x,r\right) \right) \leq r$ is evident. \
One may recognize that $\left( \ref{rr>r}\right) $ holds only for recurrent
weighted graphs. This is a nice particular situation which has been
successfully utilized in several papers to obtain heat kernel estimates and
stability results (\cite{BCK},\cite{K1}\cite{K2}). \ Almost the same proof
which leads to $\left( \ref{rr>r}\right) $ results the validity of the
Einstein relation in the form:%
\[
E_{x}\left( T_{B_{R}\left( x,r\right) }\right) \asymp rV_{R}\left(
x,r\right) 
\]%
and that the elliptic Harnack inequality follows from the bounded covering
condition. Having all that the heat kernel estimates and the parabolic
Harnack inequality follows.

The crucial observations fail in the transient case, first of all $\left( %
\ref{rr>r}\right) $ obviously does not hold, since $R\left(
x,B_{R}^{c}\left( x,r\right) \right) \rightarrow R_{0}>0$ and the rate of
convergence can be understand from the decay of $R\left( B_{R}\left(
x,r\right) ,B_{R}^{c}\left( x,2r\right) \right) $. \ 

\bigskip

In several previous works resolvents are used with success to analyze
transient walks and diffusions. \ The simplest resolvent is the following:%
\[
\sum_{n=0}^{\infty }n^{m}P_{n}\left( x,y\right) . 
\]%
It is clear that it is monotonically increasing in $m$ and may be infinite
if $P_{n}$ decays polynomially. \ In probabilistic terms one may consider
this sum as the average visit time of $y$ by the increasing family of
independent walkers which has $n^{m}$ members at time $n$. \ In independent
walkers we mean here that on a given site some new walkers "born" (according
to the expansion of the family tree) and start independent walk. \ In what
follows we need a modified version of the resolvent which provides nice
correspondence to the power of the Laplace operator while it has basically
the same propertyies. \ We fix an $m\in \mathbb{N}$ which will be specified
later and reserved as the parameter of the resolvent.

In \cite{GT2} we started the utilization of polyharmonic functions, Green
function as well as Green operators (or resolvents). \ Now we follow this
direction and find a new metric for non strongly recurrent graphs (weakly
recurrent and transient) which posses nice features.

Denote $P$ the transition operator on $l_{1}\left( \Gamma \right) $. \ $%
Pf\left( x\right) =\sum_{y\sim x}P\left( x,y\right) f\left( y\right) $.$\,$

\begin{definition}
The Laplace operator is defined as $\Delta =P-I.$ The Dirichlet form
corresponding to the Laplace operator is given by 
\begin{eqnarray*}
\mathcal{E}\left( f,g\right) &=&\mathcal{E}_{1}\left( f,g\right) =\left(
-\Delta f,g\right) =\left( \left( I-P\right) f,g\right) = \\
&=&\frac{1}{2}\sum_{x,y}\left( f\left( x\right) -f\left( y\right) \right)
\left( g\left( x\right) -g\left( y\right) \right) \mu _{x,y}
\end{eqnarray*}
\end{definition}

\begin{definition}
For $A,B\subset \Gamma ,A\cap B=\emptyset $ we define the \ resistance 
\[
R\left( A,B\right) =\inf_{f\in c_{0}}\left[ \mathcal{E}\left( f,f\right)
:f|_{A}=1,f|_{B}=0\right] ^{-1} 
\]
\end{definition}

Let $A\subset \Gamma ,\Delta ^{A}=P^{A}-I,\left( \Delta ^{A}\right)
^{m}=\left( -1\right) ^{m}\left( I-P^{A}\right) ^{m}$ the $m$-th iteration
of the Laplace operator for $m\geq 1$ integer. \ If $A=\Gamma $ we drop it
from the notation.

Let us recall, that $\lambda \left( A\right) =\inf_{f\neq 0}\frac{\mathcal{E}%
^{A}\left( f,f\right) }{\left\Vert f\right\Vert ^{2}}:\left\Vert
f\right\Vert ^{2}=\left\Vert f\right\Vert _{l^{2}\left( \Gamma ,\mu \right)
}^{2}.$

The domain of the Dirichlet form on $\Gamma $ is defined by $\mathcal{F}_{m}=%
\mathcal{F}^{A}\left( \mathcal{E}_{m}\right) =\left\{ f\in l_{2}\left(
\Gamma ,\mu \right) ,\mathcal{E}_{m}^{A}\left( f,f\right) >0\right\} $,
where the bilinear form $\mathcal{E}_{m}^{A}$ is defined as%
\[
\mathcal{E}_{m}^{A}\left( f,g\right) =\left( \left( I-P^{A}\right)
^{m}f,g\right) _{l_{2}\left( A,\mu \right) }. 
\]%
The quasi resolvent metric on $A$ is defined as%
\[
R_{m}^{A}\left( x,y\right) =\sup_{f}\left\{ \frac{\left\vert f\left(
x\right) -f\left( y\right) \right\vert ^{2}}{\mathcal{E}_{m}^{A}\left(
f,f\right) }:f\left( x\right) \neq f\left( y\right) ,f\in \mathcal{F}%
_{m}\right\} . 
\]%
Note that $R_{m}^{A}$ is decreasing in $A$ since $\mathcal{E}_{m}^{A}$ is
increasing by definition, consequently $R_{m}$ 
\begin{equation}
R_{m}\left( x,y\right) =\sup_{f}\left\{ \frac{\left\vert f\left( x\right)
-f\left( y\right) \right\vert ^{2}}{\mathcal{E}_{m}\left( f,f\right) }%
:f\left( x\right) \neq f\left( y\right) ,f\in \mathcal{F}_{m}\right\} .
\label{rm}
\end{equation}%
is existing.

That has the equivalent forms%
\[
R_{m}\left( x,y\right) =\sup_{g}\left\{ \left\vert g\left( x\right) -g\left(
y\right) \right\vert ^{2}:0<\mathcal{E}_{m}\left( g,g\right) \leq 1\right\} 
\]

and

\[
R_{m}^{-1}\left( A,B\right) =\inf \left\{ \mathcal{E}_{m}\left( f,f\right)
:f\in \mathcal{F}_{m},\mathcal{E}_{m}\left( f,f\right) >0,f|_{A}\left(
x\right) =1,f|_{B}=0\right\} . 
\]
The former one can be seen using $g=\frac{f}{\sqrt{\mathcal{E}_{m}\left(
f,f\right) }}.$

\begin{lemma}
1. For any $f\in \mathcal{F}\left( \mathcal{E}_{m}\right) $%
\begin{equation}
\left\vert f\left( x\right) -f\left( y\right) \right\vert ^{2}\leq
R_{m}\left( x,y\right) \mathcal{E}_{m}\left( f,f\right) .  \label{key2}
\end{equation}%
2. If $\Gamma $ is connected and $A\cap B=\emptyset $ then 
\[
0<R_{m}^{-1}\left( A,B\right) <\infty . 
\]
\end{lemma}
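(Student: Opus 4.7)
The plan is to dispatch the two claims in order. Part~1 is a direct consequence of the supremum definition (\ref{rm}). If $f(x) = f(y)$, both sides of (\ref{key2}) are non-negative and the left side is zero, so the inequality holds trivially. Otherwise $f \in \mathcal{F}_m$ with $f(x) \neq f(y)$ is an admissible competitor in the supremum, giving $R_m(x,y) \geq |f(x) - f(y)|^2 / \mathcal{E}_m(f,f)$, which rearranges to (\ref{key2}).

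For the first half of Part~2, namely $R_m^{-1}(A,B) < \infty$, I would exhibit one admissible test function. Fix $a \in A$; using the connectedness of $\Gamma$ and the reversibility/boundedness of $P$ on $l_2(\Gamma,\mu)$, take a bounded, essentially finitely-supported modification of $\delta_a$ that is equal to $1$ on $A$ and $0$ on $B$. Since $P$ is a bounded self-adjoint operator on $l_2$, the iterated difference $(I-P)^m f$ lies in $l_2$, so $\mathcal{E}_m(f,f) = ((I-P)^m f, f)$ is finite, and $f$ is admissible in the infimum defining $R_m^{-1}(A,B)$.

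The nontrivial half is $R_m^{-1}(A,B) > 0$. I would argue by contradiction: if the infimum were zero, then there would exist a sequence $f_n$ with $f_n|_A = 1$, $f_n|_B = 0$, and $\mathcal{E}_m(f_n,f_n) \to 0$. Fix any $a \in A$ and $b \in B$; applying Part~1 with $x = a$, $y = b$ yields
\[
1 = |f_n(a) - f_n(b)|^2 \leq R_m(a,b)\,\mathcal{E}_m(f_n,f_n) \to 0,
\]
a contradiction, \emph{provided} $R_m(a,b) < \infty$.

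The main obstacle is therefore the auxiliary pointwise bound $R_m(a,b) < \infty$, i.e.\ a Poincar\'e-type inequality $|f(a) - f(b)|^2 \leq C_{a,b}\,\mathcal{E}_m(f,f)$ for all $f \in \mathcal{F}_m$. For $m = 1$ this is the classical path estimate: choose a finite edge-path $a = x_0 \sim x_1 \sim \cdots \sim x_n = b$ (which exists by connectedness of $\Gamma$), apply Cauchy--Schwarz to the telescoping sum $f(a) - f(b) = \sum_i (f(x_i) - f(x_{i+1}))$, and convert the resulting expression into $\mathcal{E}_1(f,f)$ using the uniform lower bound $(p_0)$ on the transition probabilities. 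For $m \geq 2$, my plan is to reduce to the $m=1$ case via the spectral decomposition of the self-adjoint positive operator $I - P$ on $l_2(\Gamma,\mu)$, comparing $\mathcal{E}_m$ and $\mathcal{E}_1$ spectral-piece-by-piece. The delicate step will be the high-frequency part of the spectrum of $I-P$, where the comparison between $(I-P)^m$ and $(I-P)$ goes in the opposite direction to the low-frequency part and must be handled separately, presumably using the $(p_0)$ bound to localize the relevant spectral mass.
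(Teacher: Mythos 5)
Your treatment of Part 1 and of the upper bound $R_m^{-1}(A,B)<\infty$ matches what the paper intends (the paper's own proof is the one-liner ``follows from the definition''), although your description of the competitor for the upper bound is needlessly vague: for finite $A$ you may simply take $f=1_A\in l_2$, which is nonconstant since $A\neq\Gamma$, so $\mathcal{E}_1(f,f)>0$ and hence $\mathcal{E}_m(f,f)>0$ because the kernel of $(I-P)^m$ equals that of $I-P$ for a nonnegative self-adjoint operator. Your reduction of $R_m^{-1}(A,B)>0$ to the pointwise finiteness $R_m(a,b)<\infty$ via Part 1 and a minimizing sequence is also correct and is implicitly what the paper relies on. The genuine gap is your plan for proving $R_m(a,b)<\infty$ when $m\geq 2$. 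To transfer the $m=1$ path estimate $|f(a)-f(b)|^2\leq C\,\mathcal{E}_1(f,f)$ you would need $\mathcal{E}_1(f,f)\leq C'\,\mathcal{E}_m(f,f)$, i.e.\ $\lambda\leq C'\lambda^m$ on the spectrum of $I-P$, and for $m\geq 2$ this fails as $\lambda\to 0^+$. The obstruction therefore sits in the \emph{low}-frequency part of the spectrum, not the high-frequency part you single out, and the condition $(p_0)$ gives no control whatsoever near the bottom of the spectrum. No such comparison holds; $R_m$ really is different from (and larger than) $R_1$, which is the whole point of introducing it.

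The finiteness of $R_m$ is in fact already established by the paper immediately \emph{before} this lemma, by a route that never touches $\mathcal{E}_1$: one notes that $\mathcal{E}_m^A$ is increasing in $A$, so $R_m^A$ is decreasing in $A$, hence $R_m(a,b)\leq R_m^A(a,b)$ for any finite $A$ containing $a$ and $b$. For such finite $A$ with $A\neq\Gamma$ and $\Gamma$ connected, $I-P^A$ is a strictly positive operator on the finite-dimensional space $l^2(A,\mu)$, so $\mathcal{E}_m^A$ is a positive-definite quadratic form and $R_m^A(a,b)$ is finite by elementary linear algebra. You should replace the $m\geq 2$ spectral plan with this monotonicity and finite-set exhaustion argument, or simply invoke the observation the paper makes directly after defining $R_m^A$.
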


\begin{proof}
The statements follow from the definition.
\end{proof}

\begin{lemma}
If $A\subset B\subset D\subset \Gamma \ $then%
\begin{equation}
R_{m}\left( A,B^{c}\right) \leq R_{m}\left( A,D^{c}\right)  \label{monot}
\end{equation}
\end{lemma}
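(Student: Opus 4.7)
The plan is to unfold the definition of $R_m^{-1}(A, \cdot)$ as an infimum of $\mathcal{E}_m(f,f)$ over an admissible class of test functions, and show that the class shrinks as we replace $B^c$ with $D^c$, so the infimum grows and consequently $R_m$ decreases.

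Concretely, first I would rewrite the claim in its inverse form: since $R_m(A,B^c) \leq R_m(A,D^c)$ is equivalent to $R_m^{-1}(A,B^c) \geq R_m^{-1}(A,D^c)$, it suffices to establish the latter. Next, I would invoke the characterization recalled just before the lemma,
\[
R_m^{-1}(A, S) = \inf\bigl\{\mathcal{E}_m(f,f) : f \in \mathcal{F}_m,\ \mathcal{E}_m(f,f) > 0,\ f|_A = 1,\ f|_S = 0\bigr\},
\]
applied to $S = B^c$ and $S = D^c$.

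The key set-theoretic observation is that $B \subset D$ implies $D^c \subset B^c$. Therefore any admissible $f$ for $R_m^{-1}(A, B^c)$, which satisfies $f|_{B^c} = 0$, automatically satisfies $f|_{D^c} = 0$ as well, and hence is admissible for $R_m^{-1}(A, D^c)$. (The other constraints $f|_A = 1$ and $\mathcal{E}_m(f,f) > 0$ are unchanged.) Thus the admissible class for $R_m^{-1}(A, B^c)$ is a subset of that for $R_m^{-1}(A, D^c)$, so the infimum over the former dominates the infimum over the latter, giving $R_m^{-1}(A, B^c) \geq R_m^{-1}(A, D^c)$, which rearranges to \eqref{monot}.

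There is no real obstacle here: this is a purely definitional monotonicity statement. The only minor point to check is that the admissible class for $R_m^{-1}(A, B^c)$ is nonempty whenever the one for $R_m^{-1}(A, D^c)$ produces a finite infimum, but both finiteness and positivity are guaranteed by part 2 of the previous lemma (applied to the disjoint sets $A$ and $B^c$, respectively $A$ and $D^c$), so the comparison of infima is well-posed.
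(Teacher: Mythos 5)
Your proposal is correct and matches the paper's proof exactly: both pass to the inverse $R_m^{-1}$, note that $D^c\subset B^c$ shrinks the admissible class from $\{f|_{B^c}=0\}$ to a subset of $\{f|_{D^c}=0\}$, and conclude by monotonicity of the infimum. Your added remark on non-emptiness and finiteness via the previous lemma is a harmless extra check not present in the paper's terse three-line argument.
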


\begin{proof}
By definition for $D^{c}\subset B^{c}$ 
\begin{eqnarray*}
R_{m}^{-1}\left( A,B^{c}\right) &=&\inf \left\{ \mathcal{E}_{m}\left(
f,f\right) :f|_{A}\left( x\right) =1,f|_{B^{c}}=0\right\} \\
&\geq &\inf \left\{ \mathcal{E}_{m}\left( f,f\right) :f|_{A}\left( x\right)
=1,f|_{D^{c}}=0\right\} \\
&=&R_{m}^{-1}\left( A,D^{c}\right) .
\end{eqnarray*}
\end{proof}

\begin{lemma}
$R_{m}\left( x,y\right) $ is a quasi metric:\newline
for any $x,y\in \Gamma ,$%
\begin{eqnarray}
R_{m}\left( x,y\right)  &=&R_{m}\left( y,x\right) ,  \label{rsim} \\
R_{m}\left( x,y\right)  &=&0\text{ if and only if }x=y  \label{rnull} \\
R_{m}\left( x,y\right)  &\leq &2\left( R_{m}\left( x,z\right) +R_{m}\left(
z,y\right) \right) .  \label{rtriang}
\end{eqnarray}
\end{lemma}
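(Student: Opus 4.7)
The plan is to verify the three properties separately, each reducing to elementary manipulations of the supremum definition together with the key inequality (\ref{key2}) and Lemma 3.3.

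For symmetry (\ref{rsim}), I would simply observe that $|f(x)-f(y)|^{2}=|f(y)-f(x)|^{2}$, so the two suprema in the definition of $R_{m}(x,y)$ and $R_{m}(y,x)$ are taken over the same set of ratios.

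For the non-degeneracy statement (\ref{rnull}), the easy direction is $x=y$: the constraint $f(x)\neq f(y)$ in the definition then becomes vacuous, and the supremum over the empty set is $0$ by convention. For the converse I would take $x\neq y$ and appeal to Lemma 3.3(2) with $A=\{x\}$, $B=\{y\}$, which (using connectedness) gives $R_{m}^{-1}(\{x\},\{y\})<\infty$. Any admissible test function $g$ with $g(x)=1$, $g(y)=0$ is feasible in the supremum defining $R_{m}(x,y)$, yielding
\[
R_{m}(x,y)\;\geq\;\frac{|g(x)-g(y)|^{2}}{\mathcal{E}_{m}(g,g)}\;=\;\frac{1}{\mathcal{E}_{m}(g,g)},
\]
and taking an infimizing sequence of such $g$ produces $R_{m}(x,y)\geq R_{m}(\{x\},\{y\})>0$.

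The triangle inequality (\ref{rtriang}) is the main content; the constant $2$ comes directly from the elementary bound $(a+b)^{2}\leq 2(a^{2}+b^{2})$. Given any $f\in\mathcal{F}_{m}$ with $f(x)\neq f(y)$, I would write
\[
|f(x)-f(y)|^{2}\;\leq\;2\bigl(|f(x)-f(z)|^{2}+|f(z)-f(y)|^{2}\bigr),
\]
then apply the key inequality (\ref{key2}) from Lemma 3.3(1) to each term on the right to obtain
\[
|f(x)-f(y)|^{2}\;\leq\;2\bigl(R_{m}(x,z)+R_{m}(z,y)\bigr)\,\mathcal{E}_{m}(f,f).
\]
Dividing by $\mathcal{E}_{m}(f,f)$ and taking the supremum over all admissible $f$ yields (\ref{rtriang}).

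The only step that requires a bit of care is the strict positivity in (\ref{rnull}); everything else is a formal consequence of the supremum definition. The main conceptual obstacle would be if $\mathcal{E}_{m}$ failed to vanish on constants or failed to be finite on compactly supported functions, but both hold because $(I-P)c=0$ for constant $c$ (so $(I-P)^{m}c=0$) and $\mathcal{F}_{m}$ contains all finitely supported functions, so the construction of an admissible test function separating $x$ from $y$ is unobstructed.
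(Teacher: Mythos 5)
Your proposal is correct and follows essentially the same route as the paper: symmetry is immediate from the definition, and the weak triangle inequality rests on $(a+b)^{2}\leq 2(a^{2}+b^{2})$. The paper phrases the triangle step using the normalized variational form $R_{m}(x,y)=\sup_{g}\{|g(x)-g(y)|^{2}:0<\mathcal{E}_{m}(g,g)\leq 1\}$ and splits the supremum as $\sup(A+B)\leq\sup A+\sup B$, whereas you instead apply the key inequality (\ref{key2}) termwise and then divide by $\mathcal{E}_{m}(f,f)$; these are two essentially equivalent ways of writing the same estimate. For the non-degeneracy (\ref{rnull}) the paper simply refers to the end of Kumagai's proof of Proposition~3.1 in \cite{Ku1}, while you give the argument in full by invoking the earlier lemma $0<R_{m}^{-1}(A,B)<\infty$ with $A=\{x\}$, $B=\{y\}$ and observing that any normalized test function separating $x$ from $y$ is admissible, which is a correct and self-contained substitute. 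The small convention you use, that the supremum over the empty set (when $x=y$) is $0$, is the standard one for a set of nonnegative quantities, and your closing remark that $\mathcal{E}_{m}$ annihilates constants and that finitely supported non-constant functions lie in $\mathcal{F}_{m}$ correctly identifies the only points that could have caused trouble.
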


\begin{proof}
The first statement ensured by the definition. For the second see the end of
the proof of \cite{Ku1} Proposition 3.1. \ The weak triangular inequality
can be see as follows: 
\begin{eqnarray*}
R_{m}\left( x,y\right) &=&\sup_{g}\left\{ \left\vert g\left( x\right)
-g\left( y\right) \right\vert ^{2}:0<\mathcal{E}_{m}\left( g,g\right) \leq
1\right\} \\
&\leq &\sup_{g}\left\{ 2\left\vert g\left( x\right) -g\left( z\right)
\right\vert ^{2}+2\left\vert g\left( z\right) -g\left( y\right) \right\vert
^{2}:0<\mathcal{E}_{m}\left( g,g\right) \leq 1\right\} \\
&\leq &\sup_{g}\left\{ 2\left\vert g\left( x\right) -g\left( z\right)
\right\vert ^{2}:0<\mathcal{E}_{m}\left( g,g\right) \leq 1\right\} \\
&&+\sup_{g}\left\{ 2\left\vert g\left( z\right) -g\left( y\right)
\right\vert ^{2}:0<\mathcal{E}_{m}\left( g,g\right) \leq 1\right\} \\
&=&2\left( R_{m}\left( x,z\right) +R_{m}\left( z,y\right) \right)
\end{eqnarray*}
\end{proof}

The next result of Mac%
\'{}%
\i as and Segovia is essential in our work.

\begin{theorem}
(\cite{MS})\label{Lrr}If $X$ is a non-empty set and $d$ is a quasisymmetric
with constant $K$ :%
\[
d\left( x,y\right) \leq K\left( d\left( x,z\right) +d\left( z,y\right)
\right) 
\]%
then, there is a metric $\rho $, such that%
\[
d^{p}\left( x,y\right) \asymp \rho \left( x,y\right) 
\]%
with $p=\frac{1}{1+\log _{2}K}$.
\end{theorem}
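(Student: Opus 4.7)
The plan follows Frink's metrization scheme, adapted to an arbitrary quasi-metric constant. There are two key ideas: raising the quasi-metric to a suitable power dampens the quasi-triangle defect, and a chaining infimum then produces a genuine metric.

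First, I would choose the exponent so that $(2K)^p = 2$, i.e.\ $p = 1/(1+\log_2 K)$. Since $p \leq 1$, the subadditivity of $t \mapsto t^p$ on $[0,\infty)$ gives, for every $x,y,z$,
\[
d^p(x,y) \leq K^p(d(x,z) + d(z,y))^p \leq K^p(d^p(x,z) + d^p(z,y)),
\]
and $K^p = 2^{1-p} \leq 2$, so $d^p$ is itself a quasi-metric with constant at most $2$. This puts us at Frink's threshold.

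Next, define
\[
\rho(x,y) = \inf\left\{ \sum_{i=0}^{n-1} d^p(x_i,x_{i+1}) : n \geq 1,\; x_0 = x,\; x_n = y \right\}.
\]
Symmetry of $\rho$ (by reversing the chain) and the triangle inequality (by concatenating chains) are immediate from the definition, and the one-edge chain $(x,y)$ gives $\rho(x,y) \leq d^p(x,y)$.

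The crux is the reverse bound $\rho(x,y) \geq c\, d^p(x,y)$, which simultaneously confirms that $\rho$ is non-degenerate (hence a genuine metric) and yields the equivalence $\rho \asymp d^p$. This reduces to a chain inequality of the form $d^p(x_0,x_n) \leq C \sum_i d^p(x_i,x_{i+1})$ for every finite chain, which I would prove by induction on $n$. The inductive step splits the chain at its \emph{weighted} midpoint: writing $s = \sum_i d^p(x_i,x_{i+1})$, choose $m$ maximal with $\sum_{i<m} d^p(x_i,x_{i+1}) \leq s/2$; then also $\sum_{i>m} d^p(x_i,x_{i+1}) < s/2$. Applying the quasi-triangle inequality across $x_m$ and $x_{m+1}$, raising to the $p$-th power, and using subadditivity of $t \mapsto t^p$ and the identity $(2K)^p = 2$ one concludes the inductive bound.

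The main obstacle is closing this induction with a constant $C$ independent of $n$: a careless arrangement produces a constant that grows geometrically with chain length. The identity $(2K)^p = 2$ is precisely what balances the quasi-triangle factor against the halving of the weighted sum at each recursive split and keeps $C$ bounded, which is the technical core of the Macías--Segovia argument.
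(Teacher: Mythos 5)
The paper does not prove this theorem: it is quoted from Mac\'{\i}as--Segovia \cite{MS} and used as a black box, so there is no in-paper proof to compare against. Your proposal follows the standard Frink-type chaining strategy, which is the right framework, but as written it contains two genuine gaps.

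First, the assertion that because $d^p$ satisfies the additive inequality $d^p(x,y)\le K^p\bigl(d^p(x,z)+d^p(z,y)\bigr)$ with $K^p\le 2$ one is ``at Frink's threshold'' is incorrect. Having additive quasi-triangle constant $2$ is not sufficient for metrizability: on $\mathbb{R}$ the function $q(x,y)=|x-y|^2$ satisfies $q(x,y)\le 2\bigl(q(x,z)+q(z,y)\bigr)$, yet the chain infimum $\inf\sum q(x_i,x_{i+1})$ is identically zero (subdivide into $n$ equal steps and let $n\to\infty$), so $q$ is equivalent to no metric. Frink's threshold is the \emph{max-form} inequality $q(x,y)\le 2\max\bigl(q(x,z),q(z,y)\bigr)$, and this is exactly what the choice $(2K)^p=2$ delivers: from $d(x,y)\le K\bigl(d(x,z)+d(z,y)\bigr)\le 2K\max\bigl(d(x,z),d(z,y)\bigr)$ and raising to the $p$th power, $d^p(x,y)\le 2\max\bigl(d^p(x,z),d^p(z,y)\bigr)$. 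This is the inequality your chain argument actually needs, and it is also why the exponent must be $p=1/\log_2(2K)$ rather than anything larger.

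Second, and more seriously, the inductive step does not close as described. With the weighted-midpoint split (choose $m$ maximal with $\sum_{i<m}d^p(x_i,x_{i+1})\le s/2$, so that $\sum_{i>m}<s/2$), you must pass through \emph{both} $x_m$ and $x_{m+1}$, i.e.\ apply the quasi-triangle inequality twice. Each application contributes a factor $2K$, so at the level of $d^p$ you pick up $(2K)^{2p}=4$, while the two subchain sums are only halved: if the inductive bound is $d^p(x_0,x_m)\le C\cdot s/2$ and $d^p(x_{m+1},x_n)\le C\cdot s/2$ and $d^p(x_m,x_{m+1})\le s$, then the nested estimate gives only $d^p(x_0,x_n)\le 2\max\bigl(Cs/2,\,2\max(s,Cs/2)\bigr)$, which is $2Cs$ for $C\ge2$ and $4s$ for $C<2$ --- in either case the constant roughly doubles per recursion level, exactly the geometric blow-up you flagged as ``the main obstacle.'' The claim that ``the identity $(2K)^p=2$ balances the quasi-triangle factor against the halving'' is precisely the part that is false for this split; $(2K)^p=2$ balances \emph{one} quasi-triangle application against one halving, but the argument as set up needs two. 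Closing the induction requires an extra idea --- for instance a case distinction depending on whether the middle edge $d^p(x_m,x_{m+1})$ is large (so both subchain sums are small and only one quasi-triangle application across $x_m$ or $x_{m+1}$ suffices) or small, or a strengthened induction hypothesis with different weights on the endpoint edges. That additional bookkeeping is the actual technical content of the Mac\'{\i}as--Segovia (or Paluszy\'nski--Stempak) proof and cannot be waved away.

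So: right strategy, correct choice of $p$, correct definition of $\rho$, correct easy inequalities $\rho\le d^p$ and the metric axioms for $\rho$; but the lower bound $\rho\ge c\,d^p$, which is the entire content of the theorem, is not established by the induction you outline.
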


\begin{corollary}
There is a metric $\rho $ and \thinspace $C>1>c>0$ such that for all $x,y\in
\Gamma $%
\begin{equation}
c\rho ^{2}\left( x,y\right) \leq R_{m}\left( x,y\right) \leq C\rho
^{2}\left( x,y\right)  \label{rrr}
\end{equation}
\end{corollary}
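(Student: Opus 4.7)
The plan is to invoke Theorem \ref{Lrr} directly with the quasi-metric $d = R_m$. The previous lemma establishes that $R_m$ is symmetric (\ref{rsim}), that $R_m(x,y) = 0$ iff $x = y$ (\ref{rnull}), and that it satisfies a weak triangle inequality (\ref{rtriang}) with constant $K = 2$. So $R_m$ is exactly the kind of quasi-metric the Macías--Segovia theorem takes as input.

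Next, I would compute the exponent. With $K = 2$, the theorem produces a genuine metric $\rho$ and an exponent
\[
p = \frac{1}{1 + \log_2 K} = \frac{1}{1 + \log_2 2} = \frac{1}{2},
\]
together with constants $c',C' > 0$ such that
\[
c'\, R_m^{1/2}(x,y) \le \rho(x,y) \le C'\, R_m^{1/2}(x,y)
\]
for all $x,y \in \Gamma$.

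Finally, squaring both sides of the double inequality, and setting $c = (C')^{-2}$, $C = (c')^{-2}$, yields exactly (\ref{rrr}):
\[
c\,\rho^2(x,y) \le R_m(x,y) \le C\,\rho^2(x,y).
\]
Shrinking $c$ and enlarging $C$ if necessary, one can arrange $C > 1 > c > 0$ as stated.

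There is no real obstacle here: the corollary is essentially a one-line application of the Macías--Segovia theorem, the only content being the verification (already done in the preceding lemma) that $R_m$ satisfies the hypotheses with constant $K = 2$, which forces the exponent $p = 1/2$ and hence the squared form of the equivalence. The only mild care needed is to note that the Macías--Segovia construction produces a metric on the underlying set without requiring any topological assumption on $\Gamma$ beyond $R_m$ being a nondegenerate symmetric quasi-metric, both guaranteed by the previous lemma and the connectedness hypothesis.
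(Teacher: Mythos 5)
Your proposal is correct and is exactly what the paper intends: the corollary is an immediate application of the Macías--Segovia Theorem \ref{Lrr} to $d = R_m$, using the quasi-metric constant $K=2$ from the preceding lemma to get $p = 1/2$ and hence $\rho \asymp R_m^{1/2}$, which is the stated $\rho^2 \asymp R_m$. The arithmetic with the constants is right, and the verification that $R_m$ satisfies the hypotheses (symmetry, nondegeneracy, and the $K=2$ quasi-triangle inequality, together with finiteness for $x \neq y$ from connectedness) is precisely the content of the lemma you cite.
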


Based on this theorem we define balls with respect to $\rho $: $\widetilde{B}%
_{\rho }\left( x,r\right) =\left\{ y:\rho \left( x,y\right) <r\right\} $. As
in the case of the resistance metric it can be that the balls are not
connected. Let \ $B=B\left( x,r\right) =B_{\rho }\left( x,r\right) \subset 
\widetilde{B}_{\rho }\left( x,r\right) $ be the connected subset of $%
\widetilde{B}_{\rho }\left( x,r\right) $ containing $x.$ With the same
slight abuse of notation we shall use $B_{R}$ for the sets (balls) with
respect to the quasi-metric $R_{m}.$ \ One can immediately observe that $%
\left( \Gamma ,\rho ,\mu \right) $ satisfies volume doubling if and only if
\ $\left( \Gamma ,R_{m},\mu \right) $ does. \ In addition if the bounded
covering property holds with respect to one of $R_{m}$ or $\rho $ it holds
for the other as well and it follows from volume doubling (c.f.. \cite{ln}) .

\begin{definition}
The graph $\Gamma $ with metric $\sigma $ satisfies the bounded covering
condition if there is an integer $M>0$ such that for all $x\in \Gamma ,$ $%
r>0 $ the ball $B_{\sigma }\left( x,2r\right) $ can be covered at most $M$
balls of radius $r$.
\end{definition}

\begin{definition}
The $m$-resolvent is defined for an integer $m>0$ as follows. Let $%
Q_{m}\left( n\right) =\binom{n+m-1}{m-1},$ $A\subset \Gamma $ finite set and
for $x,y\in A$%
\[
G_{m}^{A}\left( x,y\right) =\sum_{n}Q_{m}\left( n\right) P_{n}\left(
x,y\right) 
\]%
the corresponding Green kernel is $g_{m}^{A}\left( x,y\right) =\frac{1}{\mu
\left( y\right) }G_{m}^{A}\left( x,y\right) $.
\end{definition}

The Green operators $G^{A}$ defined as usual. \ It is worth to observe
immediately, that for $m=0$ $G_{m}^{A}=I^{A}$ and for $m=1$ $%
G_{m}^{A}=G^{A}, $ the usual Green operator. For infinite $A$ the resolvent
operator may be unbounded and the Green function is $\infty .$ \ \ For
finite sets due to the transience of the Markov chain with Dirichlet
boundary, these objects are well-defined.

\begin{lemma}
The Dirichlet Green kernel $g_{m}^{A}\left( x,y\right) $ for finite $%
A\subset \Gamma $ is a reproducing kernel with respect to $\mathcal{E}_{m}.$
\end{lemma}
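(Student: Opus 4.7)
The key is to recognise the $m$-resolvent as the inverse of $(I-P^{A})^{m}$ on $l_{2}(A,\mu)$ and read the reproducing identity directly off the definition $\mathcal{E}_{m}^{A}(f,g)=((I-P^{A})^{m}f,g)_{l_{2}(A,\mu)}$. The combinatorial identity $\sum_{n\geq 0}\binom{n+m-1}{m-1}t^{n}=(1-t)^{-m}$, applied to the self-adjoint operator $P^{A}$ on $l_{2}(A,\mu)$, gives $G_{m}^{A}=(I-P^{A})^{-m}$; the series converges because finiteness of $A$ combined with the connectedness of the infinite graph $\Gamma$ forces $P^{A}$ to have spectral radius strictly less than one.

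\textbf{Execution.} Fix $x\in A$ and write $u(y):=g_{m}^{A}(x,y)$. The normalisation $g_{m}^{A}(x,y)=G_{m}^{A}(x,y)/\mu(y)$ expresses $g_{m}^{A}$ as the integral kernel of $G_{m}^{A}$ against the reference measure: $(G_{m}^{A}h)(y)=\sum_{z\in A}g_{m}^{A}(y,z)h(z)\mu(z)$ for $h\in l_{2}(A,\mu)$. Reversibility of $P^{A}$ gives the symmetry $g_{m}^{A}(x,y)=g_{m}^{A}(y,x)$, so applying $G_{m}^{A}$ to the $\mu$-normalised delta $\delta_{x}^{\star}(z):=\mu(x)^{-1}\mathbf{1}_{\{z=x\}}$ yields $G_{m}^{A}\delta_{x}^{\star}(y)=g_{m}^{A}(y,x)=u(y)$; inverting, $(I-P^{A})^{m}u=\delta_{x}^{\star}$. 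Substituting into the form,
\[
\mathcal{E}_{m}^{A}\bigl(g_{m}^{A}(x,\cdot),f\bigr)=\bigl((I-P^{A})^{m}u,\,f\bigr)_{l_{2}(A,\mu)}=(\delta_{x}^{\star},f)_{l_{2}(A,\mu)}=f(x)
\]
for every $f\in\mathcal{F}_{m}^{A}$, which is the reproducing identity.

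\textbf{Main obstacle.} No genuine difficulty arises; the real care lies in the bookkeeping between the matrix-valued Green function $G_{m}^{A}(x,y)$ and the $\mu$-normalised kernel $g_{m}^{A}(x,y)$, in using the reversibility to get symmetry of $g_{m}^{A}$, and in confirming that $(I-P^{A})^{m}$ is invertible on $l_{2}(A,\mu)$. Once these ingredients are in place the lemma is a formal one-line consequence of the spectral identity $G_{m}^{A}=(I-P^{A})^{-m}$ and the definition of $\mathcal{E}_{m}^{A}$.
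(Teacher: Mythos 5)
Your proof is correct and follows the same route as the paper's: both hinge on identifying $G_{m}^{A}=(I-P^{A})^{-m}$, reading $g_{m}^{A}$ as the $\mu$-normalised kernel of that operator, and then plugging directly into the definition $\mathcal{E}_{m}^{A}(f,g)=((I-P^{A})^{m}f,g)_{l_{2}(A,\mu)}$. The only difference is that you spell out the bookkeeping (spectral-radius argument for convergence, reversibility for symmetry of $g_{m}^{A}$, the $\mu$-normalised delta) that the paper's one-line computation leaves implicit.
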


\begin{proof}
Let $u\in \mathcal{F}_{m},u|_{A^{c}}=0$%
\begin{eqnarray*}
\mathcal{E}_{m}\left( g_{m}^{A},u\right) &=&\left( \left( -\Delta \right)
^{m}G_{m}^{A}\frac{1}{\mu \left( .\right) },u\right) \\
&=&\left( \delta _{x}\frac{1}{\mu \left( .\right) },u\right) \\
&=&u\left( x\right) .
\end{eqnarray*}
\end{proof}

The next corollary is immediate.

\begin{corollary}
\[
\mathcal{E}_{m}\left( g_{m}^{A}\left( x,.\right) ,g_{m}^{A}\left( x,.\right)
\right) =g_{m}^{A}\left( x,x\right) 
\]
\end{corollary}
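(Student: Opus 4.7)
The plan is to apply the reproducing kernel identity from the preceding lemma with the specific test function $u = g_m^A(x,\cdot)$. Recall the lemma states that for any $u \in \mathcal{F}_m$ with $u|_{A^c} = 0$ one has
\[
\mathcal{E}_m\bigl(g_m^A(x,\cdot),\, u\bigr) = u(x).
\]
So the entire proof reduces to verifying that $u := g_m^A(x,\cdot)$ is a legitimate test function in this identity, and then reading off the value of $u(x)$.

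First I would check admissibility. Since $A \subset \Gamma$ is finite, the Dirichlet Green operator $G_m^A$ is a bounded operator acting on $\ell_2(A,\mu)$, and by construction $g_m^A(x,\cdot)$ is supported in $A$, hence vanishes on $A^c$. Because $A$ is finite, $g_m^A(x,\cdot) \in \ell_2(\Gamma,\mu)$ automatically. Moreover, since $g_m^A(x,\cdot)$ is not identically zero (for instance $g_m^A(x,x) > 0$ by transience with Dirichlet boundary), one has $\mathcal{E}_m^A(g_m^A(x,\cdot), g_m^A(x,\cdot)) > 0$, placing it in $\mathcal{F}_m$.

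Then I would substitute $u = g_m^A(x,\cdot)$ into the reproducing identity, which immediately gives
\[
\mathcal{E}_m\bigl(g_m^A(x,\cdot),\, g_m^A(x,\cdot)\bigr) = g_m^A(x,\cdot)\bigl|_{y=x} = g_m^A(x,x),
\]
which is the desired equality.

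There is essentially no obstacle here: the corollary is stated as immediate, and indeed it is a one-line consequence of the reproducing property. The only subtlety worth flagging is the implicit assumption that we are using the \emph{Dirichlet} form on $A$, so that the finite support of $g_m^A(x,\cdot)$ is compatible with the boundary condition required by the reproducing formula; this is precisely why the finiteness of $A$ is needed in the hypothesis.
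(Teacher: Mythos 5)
Your argument is exactly the paper's: the corollary is stated as an immediate consequence of the preceding reproducing-kernel lemma, obtained by taking $u = g_m^A(x,\cdot)$, which is admissible since $A$ is finite and $g_m^A(x,\cdot)$ vanishes off $A$. Your extra remarks on admissibility and non-degeneracy fill in details the paper leaves tacit, but the route is the same.
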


\begin{lemma}
The minimal value in the definition of $R_{m}\left( x,A^{c}\right) $ of $%
\mathcal{E}_{m}\left( f,f\right) $ is taken by $g\left( y\right) =\frac{1}{%
g_{m}^{A}\left( x,x\right) }g_{m}^{A}\left( x,y\right) $ and%
\begin{equation}
R_{m}\left( x,A^{c}\right) =g_{m}^{A}\left( x,x\right)  \label{rg}
\end{equation}
\end{lemma}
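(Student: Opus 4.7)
The plan is to use the reproducing kernel property proved in the previous lemma together with a Cauchy--Schwarz argument. Let me abbreviate $g_0 := g_m^A(x,\cdot)$, extended by zero outside $A$, and set $g := g_0/g_m^A(x,x)$.

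First I would verify admissibility: by construction $g(x)=1$ and $g|_{A^c}=0$, so $g$ is a competitor in the variational problem
\[
R_m^{-1}(x,A^c)=\inf\bigl\{\mathcal{E}_m(f,f):f\in\mathcal{F}_m,\ f(x)=1,\ f|_{A^c}=0\bigr\}.
\]
Then I would compute $\mathcal{E}_m(g,g)$ using the immediately preceding corollary, which gives $\mathcal{E}_m(g_0,g_0)=g_m^A(x,x)$; by bilinearity,
\[
\mathcal{E}_m(g,g)=\frac{1}{g_m^A(x,x)^2}\,\mathcal{E}_m(g_0,g_0)=\frac{1}{g_m^A(x,x)}.
\]
This already yields the upper bound $R_m^{-1}(x,A^c)\le 1/g_m^A(x,x)$, i.e.\ $R_m(x,A^c)\ge g_m^A(x,x)$.

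For the matching lower bound on $\mathcal{E}_m(f,f)$ over admissible $f$, I would exploit the reproducing property: for any $f\in\mathcal{F}_m$ with $f|_{A^c}=0$ and $f(x)=1$, the previous lemma gives
\[
1=f(x)=\mathcal{E}_m(g_0,f).
\]
Now $\mathcal{E}_m(\cdot,\cdot)=((I-P^A)^m\cdot,\cdot)_{l_2(A,\mu)}$ is a symmetric non-negative bilinear form on $\mathcal{F}_m$ (since $I-P^A$ is self-adjoint and positive semi-definite on $l_2(A,\mu)$, so is its $m$-th power), hence the Cauchy--Schwarz inequality applies:
\[
1=\mathcal{E}_m(g_0,f)^2\le \mathcal{E}_m(g_0,g_0)\,\mathcal{E}_m(f,f)=g_m^A(x,x)\,\mathcal{E}_m(f,f).
\]
Rearranging gives $\mathcal{E}_m(f,f)\ge 1/g_m^A(x,x)$ for every admissible $f$, proving $R_m(x,A^c)\le g_m^A(x,x)$ and, combined with the previous step, the claimed equality $(\ref{rg})$. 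Equality in Cauchy--Schwarz together with the normalization pins down the minimizer as $g$.

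The only point that needs real attention is justifying Cauchy--Schwarz for $\mathcal{E}_m$, since the bilinear form is defined via the $m$-th power of the Laplacian rather than as an obvious inner product; this reduces to observing that $(I-P^A)^m=\bigl((I-P^A)^{m/2}\bigr)^2$ (spectral calculus on the finite-dimensional self-adjoint operator $I-P^A$) so that $\mathcal{E}_m(f,g)=\bigl((I-P^A)^{m/2}f,(I-P^A)^{m/2}g\bigr)_{l_2(A,\mu)}$ is a genuine inner-product pairing on $\mathcal{F}_m$. Everything else is bookkeeping.
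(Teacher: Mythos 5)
Your proof is correct, but it takes a genuinely different route from the paper's. The paper argues by energy decomposition (``completing the square''): given a competitor $h$ with $h(x)=1$, $h|_{A^c}=0$, it writes $h=g+d$ with $d=h-g$ (so $d(x)=0$), expands
\[
\mathcal{E}_m(h,h)=\mathcal{E}_m(g,g)+\mathcal{E}_m(d,d)+2\mathcal{E}_m(d,g),
\]
observes that the cross term vanishes exactly by the reproducing property ($\mathcal{E}_m(d,g)=c\,d(x)=0$) and that $\mathcal{E}_m(d,d)\geq 0$, hence $\mathcal{E}_m(h,h)\geq\mathcal{E}_m(g,g)$. You instead use the reproducing property to get $\mathcal{E}_m(g_0,f)=f(x)=1$ for every admissible $f$ and then invoke Cauchy--Schwarz to bound $\mathcal{E}_m(f,f)$ from below. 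The two arguments rest on the same underlying fact --- positive semi-definiteness of $(I-P^A)^m$ on $l_2(A,\mu)$, which the paper uses implicitly in the inequality $\mathcal{E}_m(d,d)\geq 0$ and you surface explicitly via the factorization through $(I-P^A)^{m/2}$ --- and are classical duals of one another in quadratic variational problems. What the paper's decomposition buys is a direct comparison $\mathcal{E}_m(h,h)\geq\mathcal{E}_m(g,g)$ without ever passing through an inequality on inner products; what yours buys is that it isolates the two directions of the equality ($R_m\geq g_m^A(x,x)$ from admissibility of $g$, $R_m\leq g_m^A(x,x)$ from Cauchy--Schwarz) and makes transparent exactly why the bilinear form supports a Cauchy--Schwarz estimate. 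Both are fine; your final remark that equality in Cauchy--Schwarz ``pins down'' the minimizer is not fully justified as stated (equality only forces $(I-P^A)^{m/2}f$ proportional to $(I-P^A)^{m/2}g_0$), but the lemma does not claim uniqueness, so nothing is lost.
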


\begin{proof}
Let $h$ be an other function with $h\left( x\right) =1$ $h|_{A^{c}}=0$ then $%
d=h-g,h=d+g$%
\[
\mathcal{E}_{m}\left( h,h\right) =\mathcal{E}_{m}\left( g,g\right) +\mathcal{%
E}_{m}\left( d,d\right) +2\mathcal{E}_{m}\left( d,g\right) 
\]%
but $\mathcal{E}_{m}\left( d,d\right) \geq 0$ while $\mathcal{E}_{m}\left(
d,g\right) =cd\left( x\right) =0$.
\end{proof}

\begin{lemma}
Assume $\left( \Gamma ,\rho \right) $ has the bounded covering property (or $%
\left( VD\right) _{\rho }$ ), then 
\[
R_{m}\left( x,B_{\rho }^{c}\left( x,r\right) \right) >cr^{2} 
\]
\end{lemma}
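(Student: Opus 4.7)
The plan is to exhibit a test function $f$ vanishing on $B^{c}$ with $f(x)=1$ and small energy, since by the minimization form of (\ref{rm}) the set-resistance $R_{m}(x,B^{c})$ equals $1/\inf\mathcal{E}_{m}(f,f)$ over such $f$.

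The geometric starting point is the following observation, which makes the construction feasible: every vertex $y\in B^{c}$ that is graph-adjacent to some vertex in $B$ must satisfy $\rho(x,y)\geq r$. Indeed, if $\rho(x,y)<r$ then $y\in\widehat{B}_{\rho}(x,r)$; together with the fact that $y$ has a neighbor in the connected component $B$ of $\widehat{B}_{\rho}(x,r)$, this forces $y\in B$, contradicting $y\in B^{c}$. Thus the graph-frontier of $B$ lies entirely in the region $\{\rho(x,\cdot)\geq r\}$.

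Using this, I would take a cutoff in the resolvent scale,
\[
f(y)=\bigl(1-R_{m}(x,y)/(\kappa r^{2})\bigr)_{+}\cdot\mathbf{1}_{B}(y),
\]
where $\kappa$ is chosen compatibly with the constants in Corollary (\ref{rrr}) so that, using $R_{m}\asymp\rho^{2}$, the support of $f$ lies in $B$. Because the graph-frontier of $B$ is $\rho$-far from $x$, the indicator $\mathbf{1}_{B}$ introduces no extra discontinuity beyond what the affine cutoff already provides on $\{\rho(x,\cdot)\geq r\}$. Applying (\ref{key2}) to the outer affine function then yields a Lipschitz-type bound on $|f(y)-f(z)|$ in terms of $R_{m}(y,z)^{1/2}$, and the bounded covering property (or $(VD)_{\rho}$) lets one organise the pairs $(y,z)$ contributing to $\mathcal{E}_{m}(f,f)$ into an annular/dyadic decomposition around $x$, summing to $\mathcal{E}_{m}(f,f)\leq C/r^{2}$ and hence $R_{m}(x,B^{c})\geq cr^{2}$.

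The main obstacle I anticipate is the energy estimate itself, particularly for $m>1$, because $\mathcal{E}_{m}=((I-P)^{m}\cdot,\cdot)$ is non-local: a cutoff argument does not reduce cleanly to boundary edges. One must expand $(I-P)^{m}=\sum_{k}\binom{m}{k}(-1)^{k}P^{k}$ and use that each $P^{k}$ only couples vertices within graph-distance $k$, so the energy still decomposes over small-scale pairs; bounded covering then controls the combinatorial growth at each dyadic $\rho$-scale around $x$, while Corollary (\ref{rrr}) is what converts these local estimates into the desired $1/r^{2}$ bound.
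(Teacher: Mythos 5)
Your proposal takes a genuinely different route from the paper's, and it has a real gap precisely at the step you yourself flag as the main obstacle. The paper does not attempt a piecewise-affine cutoff at all. Instead, following Kumagai, it works with the $m$-Green functions $q_{z}(y)=g_{m}^{D}(x,y)/g_{m}^{D}(x,x)$ on $D=B\setminus\{z\}$. The reproducing-kernel property gives the energy exactly, $\mathcal{E}_{m}(q_{z},q_{z})=1/R_{m}^{B}(x,z)\leq 1/R_{m}(x,z)$, with no estimation needed; then $(\ref{key2})$ shows $q_{z}$ is small near $z$, bounded covering gives finitely many $z_{i}$, and $q=2\left(\min_{i}q_{z_{i}}-\tfrac12\right)^{+}\mathbf{1}_{B}$ is the test function. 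The whole point of this construction is to sidestep the difficulty of estimating $\mathcal{E}_{m}$ of an ad hoc cutoff.

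Concretely, two things in your plan do not go through. First, the claimed Lipschitz control $\lvert f(y)-f(z)\rvert\lesssim R_{m}(y,z)^{1/2}/r$ fails for $f(y)=(1-R_{m}(x,y)/(\kappa r^{2}))_{+}\mathbf{1}_{B}(y)$, because $R_{m}$ is only a quasi-metric with constant $2$: the quasi-triangle inequality gives $R_{m}(x,y)-R_{m}(x,z)\leq R_{m}(x,z)+2R_{m}(z,y)$, so $\lvert R_{m}(x,y)-R_{m}(x,z)\rvert$ is not controlled by $R_{m}(y,z)$. Also $(\ref{key2})$ runs in the opposite direction to what you need: it bounds increments of a function by its $\mathcal{E}_{m}$-energy, so you cannot use it to bound the energy of your cutoff. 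Second, even granting a Lipschitz bound, the energy estimate $\mathcal{E}_{m}(f,f)\leq C/r^{2}$ is not established. For $m\geq 2$ the form $\mathcal{E}_{m}$ is nonlocal and non-Markovian, and expanding $(I-P)^{m}=\sum_{k}(-1)^{k}\binom{m}{k}P^{k}$ only tells you that $(I-P)^{m}f(y)$ depends on $f$ within graph-distance $m$; it does not produce a decomposition of $\mathcal{E}_{m}(f,f)$ into small-scale increments with controllable signs, and bounded covering alone has no leverage on such a sum. Even for $m=1$ you would still need to control $\sum_{y\sim z}\rho(y,z)^{2}\mu_{y,z}/r^{2}$ over edges in $B$, and nothing in the hypotheses bounds $\rho(y,z)$ on individual edges. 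The Green-function approach of the paper avoids all of this because the energy of $q_{z_{i}}$ is computed, not estimated, and the only combinatorial input needed is the covering number $M$.
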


\begin{proof}
\ From $\left( \ref{rrr}\right) $\ and $\left( \ref{monot}\right) $ we have
that there is a $c>0$ such for $s=cr^{2},B_{R}\left( x,s\right) \subset
B_{\rho }\left( x,r\right) $ 
\[
R_{m}\left( x,B_{\rho }^{c}\left( x,r\right) \right) \geq R_{m}\left(
x,B_{R}^{c}\left( x,s\right) \right) .
\]%
If 
\begin{equation}
R_{m}\left( x,B_{R}^{c}\left( x,s\right) \right) \geq cs  \label{RmLE}
\end{equation}%
we are ready. Now we prove $\left( \ref{RmLE}\right) $ following the steps
of \cite{Ku1}. \ Let $B=B_{R}\left( x,s\right) ,y,z\in B_{R}\left(
x,s\right) \,$ and $R_{m}\left( y,z\right) <\lambda s,\lambda \leq 1.$ Let
us fix a $c$ and a $z\in B$ with $c_{1}s<R_{m}\left( x,z\right) <s.$ \ We
consider $0\leq q_{z}\left( y\right) =\frac{g_{m}^{D}\left( x,y\right) }{%
g_{m}^{D}\left( x,x\right) }\leq 1$ $\ m$-harmonic function on $%
D=B\backslash \left\{ z\right\} $ \ with $q_{z}\left( x\right)
=1,q_{z}\left( z\right) =0.$ By definition and the reproducing property of
the Green kernel \ 
\[
\mathcal{E}_{m}\left( q_{z},q_{z}\right) =q_{z}=\frac{1}{R_{m}^{B}\left(
x,z\right) }\leq \frac{1}{R_{m}\left( x,z\right) }
\]%
where $R_{m}^{B}\left( x,z\right) $ denotes the resolvent metric within $B,$
while 
\begin{eqnarray*}
\left\vert q_{z}\left( y\right) \right\vert ^{2} &=&\left\vert q_{z}\left(
y\right) -q_{z}\left( z\right) \right\vert ^{2}\leq R_{m}\left( x,y\right) 
\mathcal{E}_{m}\left( q,q\right)  \\
&\leq &\frac{CR_{m}\left( y,z\right) }{R_{m}\left( x,z\right) }\leq \frac{%
C\left( \lambda s\right) ^{2}}{\left( c_{1}s\right) ^{2}}<\frac{1}{2}
\end{eqnarray*}%
if $\lambda =\lambda _{1}$ is chosen enough small. \ Note that volume
doubling implies bounded covering of $B_{R}\left( x,s\right) \backslash
B_{R}\left( x,c_{1}s\right) .$ \ Let $B_{R}\left( z_{i},\lambda _{1}s\right) 
$ the set of covering sets (via the covering with smaller $B_{\rho }$ balls: 
$B_{\rho }\left( z_{i},cr\right) \subset B_{R}\left( z_{i}.\lambda
_{1}s\right) $ balls with some extra increase of the covering number), $%
i=1,...,K.$ \ Denote $q\left( y\right) =\min_{i}q_{z_{i}}\left( y\right) $
and $q=2\left( q-\frac{1}{2}\right) ^{+}1_{B_{\rho }\left( x,r\right) }$. We
have that $q\left( x\right) =1$ and $q\left( y\right) =0$ on $%
B_{R}^{c}\left( x,s\right) $. Finally we obtain that%
\[
R_{m}^{-1}\left( x,B_{R}^{c}\left( x,s\right) \right) \leq \mathcal{E}%
_{m}\left( q,q\right) \leq 4\sum_{i}\mathcal{E}_{m}\left( q_{z}.q_{z}\right)
\leq \frac{4M}{\min_{i}R_{m}\left( x,z_{i}\right) }\leq \frac{4M}{c_{1}s}.
\]
\end{proof}

\begin{corollary}
\begin{equation}
R_{m}\left( x,B_{\rho }^{c}\left( x,r\right) \right) \asymp cr^{2}
\label{Rmr2}
\end{equation}
\end{corollary}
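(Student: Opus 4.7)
The lower bound hidden in the $\asymp$, namely $R_{m}(x,B_{\rho}^{c}(x,r))\geq cr^{2}$, is exactly the preceding lemma, so only the matching upper bound $R_{m}(x,B_{\rho}^{c}(x,r))\leq Cr^{2}$ needs to be supplied.

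The plan is to combine monotonicity of $R_{m}$ in the target set with the comparison (\ref{rrr}). For any $y\in B_{\rho}^{c}(x,r)$ the containment $\{y\}\subset B_{\rho}^{c}(x,r)$ imposes a strictly stronger boundary condition in the inf-characterization of $R_{m}^{-1}$, so that
\[
R_{m}(x,B_{\rho}^{c}(x,r))\;\leq\;R_{m}(x,\{y\})\;=\;R_{m}(x,y)\;\leq\;C\,\rho^{2}(x,y).
\]
It therefore suffices to exhibit one vertex $y\in B_{\rho}^{c}(x,r)$ with $\rho(x,y)\leq C'r$.

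To produce such a $y$ I would use connectedness of $\Gamma$: for any proper ball $B_{\rho}(x,r)\neq\Gamma$ there must be a graph-edge $\{z,y\}$ with $z\in B_{\rho}(x,r)$ and $y$ outside, so that by the genuine triangle inequality for $\rho$,
\[
\rho(x,y)\leq\rho(x,z)+\rho(z,y)<r+\rho(z,y).
\]
The $\rho$-length of a single edge should be uniformly controllable via assumption $(p_{0})$: in the case $m=1$ the test function $\mathbf{1}_{\{z\}}$ in $\mathcal{E}_{1}$ yields $R_{1}(z,y)\leq 1/(p_{0}\mu(z))$, which, together with (\ref{rrr}), translates into a bound on $\rho(z,y)$ by an absolute constant. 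The main obstacle I anticipate is transferring this single-edge estimate from $m=1$ to general $m\geq 2$, because $\mathcal{E}_{m}$ is not comparable to $\mathcal{E}_{1}$ from below and the naive edge bound does not carry over. One likely route is to use the identity $R_{m}(x,A^{c})=g_{m}^{A}(x,x)$ from the reproducing-kernel lemma above and convert the question into a diagonal estimate on the Dirichlet Green kernel for a small neighbourhood $A$ of $x$; the very small radius case $B_{\rho}(x,r)=\{x\}$ is handled separately, directly from $(p_{0})$.
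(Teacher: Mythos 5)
Your strategy matches the paper's in substance. The paper also obtains the lower bound from the preceding lemma and proves the upper bound by exhibiting a single point $y\in B_\rho^{c}(x,r)$ and applying monotonicity together with (\ref{rrr}); the only cosmetic difference is that the paper passes through an $R_m$-ball (it sets $S=Cr^{2}$ so that $B_R(x,S)\supset B_\rho(x,r)$, then takes $y\in\partial B_R(x,S)$ and writes $R_m(x,B_R^{c}(x,S))\le R_m(x,y)=S$), whereas you work directly with the $\rho$-ball and a boundary edge. Both routes reduce to the same core estimate.

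You are also right to flag the single-edge step as the real issue, and it is worth noting that the paper's own proof glosses over it: for $y\in\partial B_R(x,S)$ one only knows $R_m(x,y)\ge S$, not $R_m(x,y)=S$, and the excess is exactly the $R_m$-length of the boundary edge. So your caveat about transferring the one-edge bound from $m=1$ to general $m$ identifies a genuine detail that the paper leaves implicit rather than a flaw particular to your write-up. It is plausible that the intended reading of the corollary restricts to $r$ above a fixed threshold, making a uniform edge bound harmless, but the paper does not say so.

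One local error: the test function $\mathbf{1}_{\{z\}}$ inserted in the infimum characterization of $R_1^{-1}(z,y)$ gives $R_1^{-1}(z,y)\le\mathcal{E}_1(\mathbf{1}_{\{z\}},\mathbf{1}_{\{z\}})=\mu(z)$, i.e. a \emph{lower} bound $R_1(z,y)\ge 1/\mu(z)$, not the upper bound you want. The one-edge upper bound must instead come from the supremum side: dropping all but the $\{z,y\}$ term of the Dirichlet form gives $\mathcal{E}_1(f,f)\ge\mu_{zy}\left|f(z)-f(y)\right|^{2}$, hence $R_1(z,y)\le 1/\mu_{zy}\le 1/(p_0\mu(z))$ by $(p_0)$. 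Your stated inequality is correct, but the justification is reversed. The fallback via $R_m(x,A^{c})=g_m^{A}(x,x)$ is not what the paper uses for this particular corollary, but it is consistent with the tools the paper has already set up and is a reasonable alternative avenue.
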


\begin{proof}
The lower estimate was given above, the upper one is almost immediate. We
chose $S=Cr^{2}$ so that $B_{R}\left( x,S\right) \supset B_{\rho }\left(
x,r\right) .$ Let $y\in \partial B_{R}\left( x,S\right) $ and apply $\left( %
\ref{monot}\right) $ for $\left\{ y\right\} \subset B_{R}^{c}\left(
x,S\right) =$ 
\[
R_{m}\left( x,B_{R}^{c}\left( x,S\right) \right) \leq R_{m}\left( x,y\right)
=S=Cr^{2}. 
\]
\end{proof}

\section{The tail distribution of the exit time}

This section contains two key results. \ One establishes an estimate similar
to the Einstein relation, the other presents the estimate of the tail
distribution of the exit time. \ The novelty in the approach is that in the
lack of the usual Einstein relation all the arguments should be accommodated
to the $m$--resolvent.

For brevity we will use the following notations:

$E_{m}\left( A|x\right) =\mathbb{E}_{x}\left( Q_{m+1}\left( T_{A}\right)
|X_{0}=x\right) $

$\overline{E}_{m}\left( A\right) =\max_{x\in A}E_{m}\left( A|x\right) $

$E_{m}\left( x,r\right) =E_{m}\left( x,r\right) =\mathbb{E}_{m}\left(
B_{\rho }\left( x,r\right) |x\right) $. We will use the particular notation
for $m=0,$

$E_{\rho }\left( x,r\right) =\mathbb{E}_{0}\left( B_{\rho }\left( x,r\right)
|x\right) $ is the usual mean exit time.

.

\begin{lemma}
\label{p<}For a set $A\subset \Gamma ,x\in A,$ there is a $C_{0}>1$ such that%
\[
\mathbb{P}_{x}\left( T_{A}<n\right) \leq 1-\frac{E_{m}\left( A\right) }{C%
\overline{E}_{m}\left( A\right) }+\frac{Cn^{m}}{\overline{E}_{m}\left(
A\right) } 
\]%
\newline
\end{lemma}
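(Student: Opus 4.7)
The plan is to adapt the classical one--line exit time estimate for $m=0$, namely $\mathbb{P}_x(T_A<n)\le 1-E_0(A|x)/\overline{E}_0(A)+n/\overline{E}_0(A)$, to the polynomially weighted exit time $Q_{m+1}(T_A)=\binom{T_A+m}{m}$. (I read $E_m(A)$ in the statement as $E_m(A|x)$ in the notation fixed just before the lemma.) The only real complication relative to $m=0$ is that $Q_{m+1}$ is a polynomial of degree $m$ rather than the identity, so both the ``overshoot beyond $n$'' step and the renewal step at time $n$ have to be carried out at the level of $Q_{m+1}$ rather than $T_A$ itself.

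The first preparatory step is a polynomial sub--additivity bound: for all integers $a,b\ge 0$,
\[
Q_{m+1}(a+b)\;\le\;C_m\bigl(Q_{m+1}(a)+Q_{m+1}(b)\bigr).
\]
This is immediate from $Q_{m+1}(k)\asymp (1+k)^m$, with constants depending only on $m$, together with $(1+a+b)^m\le 2^m\bigl((1+a)^m+(1+b)^m\bigr)$. In the same vein, $Q_{m+1}(n)\le c_m n^m$ for $n\ge 1$, which will account for the $n^m$ in the target inequality.

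The main step is to split $E_m(A|x)=\mathbb{E}_x\bigl(Q_{m+1}(T_A)\bigr)$ at the deterministic time $n$:
\[
E_m(A|x)=\mathbb{E}_x\bigl(Q_{m+1}(T_A);T_A\le n\bigr)+\mathbb{E}_x\bigl(Q_{m+1}(T_A);T_A>n\bigr).
\]
The first summand is at most $Q_{m+1}(n)\le c_m n^m$. For the second, on $\{T_A>n\}$ write $T_A=n+(T_A-n)$, apply the sub--additivity just established, and then invoke the strong Markov property at the deterministic time $n$: on this event $X_n\in A$, so $\mathbb{E}_{X_n}\bigl(Q_{m+1}(T_A)\bigr)\le\overline{E}_m(A)$. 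Putting these pieces together yields
\[
E_m(A|x)\;\le\;C\,n^m+C\,\overline{E}_m(A)\,\mathbb{P}_x(T_A>n)
\]
for some $C=C(m)$.

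Finally, I rearrange this inequality to isolate $\mathbb{P}_x(T_A>n)$ and combine it with the elementary inequality $\mathbb{P}_x(T_A<n)=1-\mathbb{P}_x(T_A\ge n)\le 1-\mathbb{P}_x(T_A>n)$ to obtain the stated bound, with the paper's $C$ equal to the $C$ above. The only place that needs a little care is the polynomial sub--additivity and the explicit tracking of the $m$--dependence of $C$; once those are in place the rest is a textbook strong Markov split, so I do not anticipate a genuine obstacle.
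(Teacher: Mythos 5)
Your argument is correct and follows essentially the same route as the paper's proof: both split the exit time at the deterministic time $n$, apply the Markov property to bound the contribution on $\{T_A>n\}$ by $\overline{E}_m(A)\,\mathbb{P}_x(T_A>n)$, and control the polynomial weight by a sub-additivity estimate of order $m$. The only cosmetic difference is that you work directly with the sub-additivity of $Q_{m+1}$, whereas the paper writes the pointwise inequality $T_A^m \le 2^m\left((2n)^m + I(T_A>n)\,T_A^m\circ\Theta_n\right)$ and relies on the comparability $Q_{m+1}(T)\asymp T^m$ (Lemma \ref{Lmmm}) to pass between $E_m$ and moments of $T_A$.
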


\begin{proof}
\begin{eqnarray*}
T_{A} &\leq &2n+I\left( T_{A}>n\right) T_{A}\circ \Theta _{n}, \\
T_{A}^{m} &\leq &2^{m}\left( \left( 2n\right) ^{m}+I\left( T_{A}>n\right)
T_{A}^{m}\circ \Theta _{n}\right) ,
\end{eqnarray*}%
where $\Theta _{n}$ is the time shift operator. From the strong Markov
property one obtains with $C=2^{\left\lceil m\right\rceil }$%
\begin{eqnarray*}
E_{m}\left( A\right) &\leq &C^{2}n^{m}+C\mathbb{E}_{x}\left( I\left(
T_{A}>n\right) \mathbb{E}_{X_{n}}\left( T_{A}^{m}\right) \right) \\
&\leq &C^{2}n^{m}+C\mathbb{P}_{x}\left( T_{A}>n\right) \overline{E}%
_{m}\left( A\right) . \\
\frac{E_{m}\left( A\right) }{C\overline{E}_{m}\left( A\right) } &\leq &\frac{%
Cn^{m}}{\overline{E}_{m}\left( A\right) }+\mathbb{P}_{x}\left( T_{A}>n\right)
\end{eqnarray*}%
and the statement follows.
\end{proof}

Let us recall here that under $\left( VD\right) _{\rho }$ the scaling
function $H\left( x,r\right) =r^{2}V_{\rho }\left( x,r\right) $ has nice
regularity properties.

\begin{corollary}
If $\left( VD\right) _{\rho }$ holds then there is a $c_{0}$ such that if $%
n=\left( \frac{1}{2}C_{0}^{-2}E_{m}\left( x,r\right) \right) ^{1/m}$ 
\begin{equation}
\mathbb{P}_{x}\left( T_{B_{\rho }\left( x,r\right) }\geq n\right) \geq c_{0}.
\label{P>c}
\end{equation}%
Here $C_{0}$ is given by the Lemma \ref{p<}.
\end{corollary}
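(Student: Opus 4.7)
The plan is to apply Lemma~\ref{p<} to $A=B=B_\rho(x,r)$ and then dispose of the resulting ratio $E_m(x,r)/\overline{E}_m(B)$ using volume doubling.

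First, I would insert the prescribed value $n^m=\tfrac{1}{2} C_0^{-2}E_m(x,r)$ directly into the bound of Lemma~\ref{p<}. Passing to the complementary event yields
\[
\mathbb{P}_x(T_B\geq n) \;\geq\; \frac{E_m(x,r)}{C_0\,\overline{E}_m(B)} \;-\; \frac{C_0\, n^m}{\overline{E}_m(B)} \;=\; \frac{E_m(x,r)}{2\,C_0\,\overline{E}_m(B)},
\]
since with this choice of $n^m$ the subtrahend is exactly half of the minuend.

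Second, it suffices to establish $\overline{E}_m(B)\leq C_1 E_m(x,r)$ with $C_1$ depending only on the doubling constant; the conclusion of the corollary then holds with $c_0 := 1/(2\,C_0\,C_1)$. For any $y\in B$ the triangle inequality for $\rho$ gives $B_\rho(x,r)\subseteq B_\rho(y,2r)$, whence $T_{B_\rho(x,r)}\leq T_{B_\rho(y,2r)}$ under $\mathbb{P}_y$ and therefore $E_m(B|y)\leq E_m(y,2r)$. What remains is the comparison $E_m(y,2r)\leq C_2 E_m(x,r)$, for which I would combine the doubling regularity of $H(x,r)=r^2V_\rho(x,r)$ recalled immediately before the corollary with the Einstein-type identity $R_m(x,B_\rho^c(x,r))\asymp r^2$ from~(\ref{Rmr2}) and the reproducing-kernel identification $R_m(x,B_\rho^c(x,r))=g_m^B(x,x)$ from~(\ref{rg}).

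The main obstacle is precisely this second step: bootstrapping the volume doubling of $V_\rho$ into a doubling and spatial-comparability estimate for the $m$-exit-time functional $E_m(\cdot,\cdot)$ inside $B_\rho(x,r)$. Absent a full Einstein relation at this stage of the paper, the cleanest route is probably through the Green-kernel bound $g_m^B(y,y)\leq g_m^{B_\rho(y,2r)}(y,y)\asymp (2r)^2\asymp r^2\asymp g_m^B(x,x)$, followed by a summation identity linking $E_m(A|y)$ to an iterated Green kernel, reinforced by a bounded-covering argument of the same flavour as the one used for~(\ref{RmLE}) in the preceding lemma.
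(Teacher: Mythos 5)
The paper offers no explicit proof of this corollary; the only cue is the preceding remark about the regularity of $H(x,r)=r^{2}V_{\rho}(x,r)$, and your proposal follows exactly that intended line. Your first step is right: with $n^{m}=\tfrac12 C_{0}^{-2}E_{m}(x,r)$, Lemma~\ref{p<} applied to $A=B_{\rho}(x,r)$ gives $\mathbb{P}_{x}(T_{B}\geq n)\geq \tfrac{1}{2C_{0}}\,E_{m}(x,r)/\overline{E}_{m}(B)$, so the whole content is the comparison $\overline{E}_{m}(B)\leq C_{1}E_{m}(x,r)$.

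You correctly flag this comparison as the real work. One way to tighten the closing step and bypass the intermediate quantity $E_{m}(y,2r)$: by Lemma~\ref{LE<ggm} (applied at $y$) together with~(\ref{rg}), $E_{m}(B|y)\leq g_{m}^{B}(y,y)\mu(B)=R_{m}(y,B^{c})\mu(B)$; by monotonicity~(\ref{monot}) and $B\subset B_{\rho}(y,2r)$ one has $R_{m}(y,B^{c})\leq R_{m}\bigl(y,B_{\rho}^{c}(y,2r)\bigr)\leq C r^{2}$ by~(\ref{Rmr2}); and $\mu(B)=V_{\rho}(x,r)$. Hence $\overline{E}_{m}(B)\leq Cr^{2}V_{\rho}(x,r)$. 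Pair this with the lower bound $E_{m}(x,r)\geq c\,r^{2}V_{\rho}(x,r)$, which is exactly what the lower half of the proof of Theorem~\ref{TERm} delivers (reproducing property of $g_{m}^{B}$, $g_{m}^{B}(x,x)\geq cr^{2}$ from the covering lemma, and $(VD)_{\rho}$). This is the precise version of the Green-kernel route you sketch, and it is indeed what ``nice regularity of $H$'' is pointing at. One presentational caveat worth noting: Lemma~\ref{LE<ggm}, identity~(\ref{GB}) and the relevant part of Theorem~\ref{TERm} all appear after this corollary in the paper's ordering; their proofs, however, do not rely on~(\ref{P>c}), so there is no circularity, only a forward reference.
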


\begin{theorem}
\label{TP}If $\left( \Gamma ,\rho \right) $ satisfies $\left( VD\right)
_{\rho }$ then, for $B=B_{\rho }\left( x,r\right) $%
\begin{equation}
\mathbb{P}_{x}\left( T_{B_{\rho }\left( x,r\right) }<n\right) \leq C\exp
\left( -ck_{m}\left( x,n,r\right) \right)  \label{P}
\end{equation}%
where $k=k_{m}\left( x,n,r\right) >1$ is the maximal integer for which%
\begin{equation}
\frac{n^{m}}{k}\leq q\min_{y\in B_{\rho }\left( x,r\right) }E_{m}\left(
B_{\rho }\left( y,\frac{r}{k}\right) \right) ,  \label{kdef}
\end{equation}%
where $q$ is a small constant (to be specified later).
\end{theorem}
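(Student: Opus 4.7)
The plan is a classical chain-of-small-exits argument combined with an exponential moment (Chernoff) bound. Set $s = r/(k+1)$ and define stopping times $\tau_0 = 0$, $\tau_{j+1} = \inf\{t > \tau_j : \rho(X_t, X_{\tau_j}) \geq s\}$. Because $\rho$ is a genuine metric (Corollary to Theorem \ref{Lrr}), the triangle inequality forces $\rho(X_{\tau_j}, x) \leq js \leq r$ for $j \leq k$, so each $X_{\tau_j}$ stays inside $B_\rho(x,r)$ and therefore $\{T_B < n\}\subseteq \{\tau_k < n\}$.

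For each increment $\sigma_j = \tau_{j+1} - \tau_j$, the strong Markov property reduces the analysis to the exit time of the walk starting from the center of $B_\rho(X_{\tau_j}, s)$. The defining inequality (\ref{kdef}) of $k$, together with $m \geq 1$, yields $(n/k)^m \leq q\, E_m(y,s)$ for every $y \in B_\rho(x,r)$. Feeding this into (\ref{P>c}) with threshold $t_0 := (E_m(y,s)/(2C_0^2))^{1/m}$ gives both $t_0 \geq c_1 n/k$ and the uniform one-step estimate $\mathbb{P}_y(\sigma_j \geq t_0) \geq c_0$.

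To chain these estimates, for any $\lambda > 0$ I split
\[
\mathbb{E}_y\bigl[e^{-\lambda \sigma_j}\bigr] \leq \mathbb{P}_y(\sigma_j < t_0) + e^{-\lambda t_0} \leq (1-c_0) + e^{-\lambda t_0},
\]
and choose $\lambda$ so that $\lambda t_0 = \log(2/c_0)$; then $\mathbb{E}_y[e^{-\lambda \sigma_j}] \leq 1-c_0/2$ uniformly in $y \in B$. Iterating through strong Markov, $\mathbb{E}_x[e^{-\lambda \tau_k}] \leq (1-c_0/2)^k$, and Chernoff gives
\[
\mathbb{P}_x(T_B < n) \leq \mathbb{P}_x(\tau_k < n) \leq e^{\lambda n}\bigl(1-c_0/2\bigr)^{k}.
\]
Since $\lambda n \leq c_1^{-1}\log(2/c_0)\cdot k$, choosing the constant $q$ in (\ref{kdef}) small enough that $c_1^{-1}\log(2/c_0) < \tfrac12 \log\!\bigl(1/(1-c_0/2)\bigr)$ produces the desired $\mathbb{P}_x(T_B < n) \leq C\exp(-ck)$.

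The main obstacle is the simultaneous tuning of constants: $\lambda t_0$ must be at least a fixed positive constant to kill $e^{-\lambda t_0}$, while $\lambda n$ must remain of order $k$ to tame the Chernoff factor. Both can be achieved only because (\ref{kdef}) forces $t_0 \gtrsim n/k$ with an implicit constant that can be made as large as one wishes by shrinking $q$. Notice that no comparability between $E_m(A|\cdot)$ and $\overline{E}_m(A)$ across the small balls is needed here, beyond the centered lower bound supplied directly by (\ref{kdef}); the exponential moment method absorbs what would otherwise require a uniform pigeonhole argument.
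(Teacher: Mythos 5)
Your chaining strategy is the same Barlow--Bass idea the paper uses, but the object you exponentiate is different, and that changes the character of the argument. The paper bounds $T_B$ through the \emph{lossy} inequality $T_B^m \ge \sum_{i=1}^k \tau_i^m$ (lossy for $m>1$, and this is exactly what the author calls a ``very rough estimate''), feeds the full polynomial tail bound $\mathbb{P}(\tau < t) \le p + a t^m$ from Lemma~\ref{p<} into a Laplace-transform estimate $\mathbb{E}[e^{-\lambda \tau^m}] \le p + a/\lambda$, and then optimizes $\lambda$ as in Barlow's lecture notes. You instead keep the argument at the level of first-power times, writing $T_B \ge \tau_k = \sum_j \sigma_j$ and discarding everything from Lemma~\ref{p<} except the single threshold consequence $\mathbb{P}_y(\sigma \ge t_0) \ge c_0$ recorded in the Corollary containing \eqref{P>c}. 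This avoids manipulating $m$-th powers entirely, and the subsequent Chernoff step with a fixed choice $\lambda t_0 = \log(2/c_0)$ is more elementary than the paper's $\lambda$-optimization. What you give up is the explicit form of the tail, but since in the end only the threshold information is used, nothing in the final bound is lost. Your arithmetic for tuning $q$ (shrinking $q$ inflates $c_1$ so that $\lambda n = O(k)$ with a constant that can be made arbitrarily small) is sound.

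Two small caveats worth flagging. First, your parenthetical remark that ``no comparability between $E_m(A|\cdot)$ and $\overline{E}_m(A)$ across the small balls is needed'' overstates the economy of your argument: that comparability \emph{is} used, it is just packaged inside \eqref{P>c}, whose derivation from Lemma~\ref{p<} needs $E_m(A)/\overline{E}_m(A) \ge c > 0$, which in turn rests on $(VD)_\rho$ via the Einstein relation (\ref{ERm}); you have simply outsourced it. Second, the triangle-inequality step $\rho(X_{\tau_j},x)\le js$ ignores the one-step overshoot at each stopping time --- $\rho(X_{\tau_{j+1}},X_{\tau_j})$ can slightly exceed $s$ --- and your choice $s=r/(k+1)$ only buys a margin of order $r/k$. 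Both the paper and the cited references gloss over this discrete-overshoot issue, so this is a shared technicality rather than a gap specific to your proof, but in a careful write-up one would adjust the radii by a constant factor to absorb it.
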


\begin{definition}
Let us define $\beta _{m}$ as the smallest possible exponent for which%
\begin{equation}
\frac{R^{2}V_{\rho }\left( x,R\right) }{r^{2}V_{\rho }\left( x,r\right) }%
\leq C\left( \frac{R}{r}\right) ^{\beta _{m}},  \label{vv}
\end{equation}%
and observe that $\left( \ref{vv}\right) $ equivalent to $\left( VD\right)
_{\rho }$.
\end{definition}

\begin{remark}
There are several further trancripts of $\left( \ref{P}\right) .$ \ In the
simplest case if $r^{2}V_{\rho }\left( x,r\right) \asymp r^{\beta
},B=B_{\rho }\left( x,r\right) $ one has%
\begin{equation}
\mathbb{P}_{x}\left( T_{B}<n\right) \leq C\exp \left( -c\left( \frac{%
r^{\beta }}{n^{m}}\right) ^{\frac{1}{\beta -1}}\right) .  \label{Pi beta}
\end{equation}
\end{remark}

\begin{remark}
From $\left( \ref{Pi beta}\right) $ one can see that the estimate is weaker
as $m$ increases. If a lower estimate of the same form and magnitude is
aimed, $m$ should be chosen as small as possible. However it should be
recognized, that the increase of $m$ not only increase the upper bound but
the probability on the left hand side of $\left( \ref{P}\right) $.
\end{remark}

\begin{proof}[Proof of Theorem \protect\ref{TP}]
The proof follows the old, nice idea of \cite{BB1} (see also \cite{B1} Lemma
3.14). \ The only modification is that we use the very rough estimate:%
\[
T_{B_{\rho }\left( x,r\right) }^{m}\geq \sum_{i=1}^{k}\tau _{i}^{m} 
\]%
where $\tau _{i}$ is the exit time of $\partial B_{\rho }\left( \xi _{i},%
\frac{r}{k}\right) ,\xi _{i}=X_{\tau _{i-1}}$ and $k\geq 1$ will be chosen
later. \ \ \ From Lemma $\ref{p<}$ we have that with $t=\frac{n}{k}$%
\begin{equation}
P\left( \tau <t\right) \leq p+at^{m}  \label{ttt}
\end{equation}%
where $p\in \left[ \frac{1}{2},1-\varepsilon \right] $ and $a=\frac{2^{m}}{%
\overline{E}_{m}\left( x,\frac{r}{k}\right) }.$ \ Let $\eta $ be such that $%
P\left( \tau <t\right) =\left( p+at^{m}\right) \wedge 1$. \ The relation $%
\left( \ref{ttt}\right) can$ be rewritten as%
\[
P\left( \tau ^{m}<s\right) \leq p+as 
\]%
\[
\mathbb{E}\left( \exp \left( -\lambda \tau ^{m}\right) \right) \leq \mathbb{E%
}\left( \exp \left( -\lambda \eta ^{m}\right) \right) \leq p+a\lambda ^{-1}. 
\]%
From that point the proof can be finished as in \cite{B1}.
\end{proof}

\subsection{The Einstein relation}

The relation between the mean exit time of a ball, its volume and resistance
is regarded as a key tool to obtain heat kernel estimates. \ In this section
we obtain the corresponding relation with respect to the distance $\rho $
assuming only $\ $volume doubling and existence of the $m\geq 0$ integer. \
More precisely we show the following statements.

\begin{theorem}
\label{TER}If $\left( \Gamma ,\mu ,\rho \right) $ satisfies $\left(
VD\right) _{\rho }\ $then, satisfies $\left( ER\right) _{\rho }:$%
\begin{equation}
E_{\rho }\left( x,2r\right) \asymp \left[ R_{m}\left( x,B^{c}\right) V_{\rho
}\left( x,2r\right) \right] ^{1/m}  \label{ERrr}
\end{equation}%
with $B=B_{\rho }\left( x,2r\right) ,E_{\rho }\left( x,r\right) =\mathbb{E}%
_{m}\left( B_{\rho }\left( x,r\right) |x\right) .$
\end{theorem}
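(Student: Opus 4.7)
My plan is to prove the Einstein-type identity in two stages: first the $m$-moment version
\[
E_m(B|x) = \mathbb{E}_x(Q_{m+1}(T_B)) \asymp R_m(x,B^c)\, V_\rho(x,2r),
\]
then descend from $E_m(B|x)$ to the ordinary mean exit time $E_\rho(x,2r) = \mathbb{E}_x(T_B)$ by extracting $m$-th roots, using Jensen's inequality in one direction and Corollary \eqref{P>c} in the other. Since $Q_{m+1}(n) \asymp n^m/m!$ for large $n$, the hockey-stick identity $\sum_{k=0}^{N} Q_m(k) = Q_{m+1}(N)$ gives $E_m(B|x) \asymp \sum_{y \in B} \mu(y)\, g_m^B(x,y)$, which reduces step one to two-sided control of the Green kernel $g_m^B(x,\cdot)$ on $B$.

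For the upper bound in step one, the reproducing identity combined with Cauchy-Schwarz for $\mathcal{E}_m$ yields $g_m^B(x,y)^2 \leq g_m^B(x,x)\, g_m^B(y,y)$. Using $g_m^B(y,y) = R_m(y,B^c)$ from \eqref{rg}, the monotonicity \eqref{monot} together with $B \subset B_\rho(y, 4r)$ (which follows from the quasi-triangle inequality \eqref{rtriang} and \eqref{rrr}) and the key estimate \eqref{Rmr2} give $g_m^B(y,y) \leq C(2r)^2 \asymp g_m^B(x,x)$ uniformly in $y \in B$. Summation then produces $E_m(B|x) \leq C R_m(x,B^c)\, V_\rho(x,2r)$.

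The lower bound is the real technical step. Applying the key inequality \eqref{key2} to $f = g_m^B(x,\cdot)$ and using $\mathcal{E}_m(f,f) = g_m^B(x,x)$ gives
\[
\bigl(g_m^B(x,x) - g_m^B(x,y)\bigr)^2 \leq R_m(x,y)\, g_m^B(x,x).
\]
Combined with $R_m(x,y) \leq C\rho(x,y)^2$ from \eqref{rrr} and $g_m^B(x,x) \asymp (2r)^2$ from \eqref{Rmr2}, this forces $g_m^B(x,y) \geq \frac{1}{2}\, g_m^B(x,x)$ for every $y \in B_\rho(x, 2\eta r)$ with $\eta > 0$ a sufficiently small absolute constant. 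Restricting the Green-function sum to this sub-ball and invoking $(VD)_\rho$ to replace $V_\rho(x, 2\eta r)$ by a constant multiple of $V_\rho(x,2r)$ yields $E_m(B|x) \geq c\, R_m(x,B^c)\, V_\rho(x,2r)$.

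To pass to the first moment, Jensen's inequality together with $Q_{m+1}(n) \geq c\, n^m / m!$ gives $\mathbb{E}_x(T_B) \leq (\mathbb{E}_x(T_B^m))^{1/m} \leq C\,(E_m(B|x))^{1/m}$, which is the upper half of \eqref{ERrr}. For the matching lower bound, Corollary \eqref{P>c} guarantees $\mathbb{P}_x(T_B \geq n) \geq c_0$ at $n = (\frac{1}{2}C_0^{-2} E_m(B|x))^{1/m}$, so $\mathbb{E}_x(T_B) \geq c_0\, n \geq c\,(E_m(B|x))^{1/m}$. The principal obstacle is the Green-function lower bound in step one: converting the variational reproducing identity into a quantitative pointwise control $g_m^B(x,y) \geq c\, g_m^B(x,x)$ on a definite fraction of $B$ is precisely where the interplay of \eqref{key2}, the equivalence $R_m \asymp \rho^2$, and volume doubling does the essential work, with the remaining arguments being essentially formal.
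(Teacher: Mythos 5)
Your two-stage plan is exactly the paper's: Theorem~\ref{TERm} gives the $m$-moment Einstein relation, and Lemma~\ref{Lmmm} together with Lemma~\ref{LEQTm} lets one pass to the first moment, which is precisely how the paper deduces Theorem~\ref{TER}. The "real technical step" you identify (the lower bound $g_m^B(x,z)\ge \tfrac{1}{2}g_m^B(x,x)$ on a proportionally sized sub-ball, via \eqref{key2}, the reproducing property, \eqref{rrr}, and \eqref{Rmr2}, followed by volume doubling) is the same calculation as the paper's proof of Theorem~\ref{TERm}.

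There are two minor divergences in implementation, both legitimate. For the upper half of the $m$-moment relation, the paper invokes Lemma~\ref{LE<ggm}, i.e.\ $E_m(A|x)\le g_m^A(x,x)\mu(A)$, which rests on the pointwise bound $g_m^A(x,y)\le g_m^A(x,x)$; you instead use the reproducing property plus Cauchy--Schwarz to get $g_m^B(x,y)^2\le g_m^B(x,x)\,g_m^B(y,y)$ and then control $g_m^B(y,y)=R_m(y,B^c)$ uniformly over $y\in B$ via \eqref{monot} and \eqref{Rmr2}. This is slightly longer but sidesteps having to justify a maximum principle for the $m$-fold Green kernel, which is not entirely obvious when $m>1$ and is not spelled out in the paper. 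For the lower half of Lemma~\ref{LEQTm}, the paper routes through the full tail estimate \eqref{rPH} and a choice of $c_0$ tuned to that exponential bound, whereas you use the cruder Corollary \eqref{P>c} (which ultimately comes from Lemma~\ref{p<}) to get $\mathbb{P}_x(T_B\ge n)\ge c_0$ at $n\asymp E_m(B|x)^{1/m}$ and then $\mathbb{E}_x(T_B)\ge c_0 n$. Both routes work; yours is a bit more economical, with the caveat that Corollary \eqref{P>c} itself needs the comparability $E_m(A)\asymp\overline{E}_m(A)$ for balls, which in turn uses the lower bound from Theorem~\ref{TERm} and $(VD)_\rho$, so the dependence structure is the same either way.
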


Theorem \ref{TER} will follow from the next statement and from the tail
estimate $\left( \ref{rPH}\right) $ of the exit time.

\begin{theorem}
\label{TERm}If $\mu $ satisfies $\left( VD\right) _{\rho }$ then, $\left(
ER\right) _{m}:$%
\begin{equation}
E_{m}\left( x,2r\right) \asymp R_{m}\left( x,2r\right) V_{\rho }\left(
x,2r\right)  \label{ERm}
\end{equation}%
holds, where $B=B_{\rho }\left( x,r\right) .$ $E_{m}\left( x,r\right) =%
\mathbb{E}_{m}\left( B|x\right) $,$R_{m}\left( x,2r\right) =R_{m}\left(
x,B_{\rho }^{c}\left( x,2r\right) \right) $
\end{theorem}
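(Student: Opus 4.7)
The plan is to reduce both bounds to estimates on the Dirichlet Green kernel $g_m^B$ with $B=B_\rho(x,2r)$, leveraging the identity $R_m(x,B^c)=g_m^B(x,x)$ from (\ref{rg}), the reproducing property, and the key inequality (\ref{key2}). First I would relate $E_m(x,2r)=\mathbb{E}_x[Q_{m+1}(T_B)]$ to the Green integral $\sum_{y\in B}\mu(y)g_m^B(x,y)$. Writing this integral as $\sum_n Q_m(n)\mathbb{P}_x(T_B>n)$ and swapping the order of summation, the hockey-stick identity $\sum_{n=0}^{k-1}Q_m(n)=Q_{m+1}(k-1)$ yields $\sum_y\mu(y)g_m^B(x,y)=\mathbb{E}_x[Q_{m+1}(T_B-1)]$; since $x\in B$ forces $T_B\geq 1$, the ratio $Q_{m+1}(T_B)/Q_{m+1}(T_B-1)$ is bounded between $1$ and $m+1$, so $E_m(x,2r)\asymp \sum_{y\in B}\mu(y)g_m^B(x,y)$.

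For the upper bound, I would apply Cauchy--Schwarz in $\mathcal{E}_m$ using the reproducing property: $g_m^B(x,y)=\mathcal{E}_m(g_m^B(x,\cdot),g_m^B(y,\cdot))\leq \sqrt{g_m^B(x,x)\,g_m^B(y,y)}$. For $y\in B$, the weak triangle inequality for $\rho$ gives $B\subset B_\rho(y,Cr)$, so by monotonicity (\ref{monot}) and (\ref{Rmr2}), $g_m^B(y,y)=R_m(y,B^c)\leq R_m(y,B_\rho^c(y,Cr))\asymp r^2\asymp R_m(x,B^c)=g_m^B(x,x)$. Therefore $g_m^B(x,y)\leq C g_m^B(x,x)$ uniformly over $y\in B$, and summing yields $\sum_{y\in B}\mu(y)g_m^B(x,y)\leq C g_m^B(x,x)V_\rho(x,2r)$, which is the desired upper estimate.

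For the lower bound, I would localize: set $w(\cdot)=g_m^B(x,\cdot)/g_m^B(x,x)$, so $w(x)=1$, $w|_{B^c}=0$, and by the reproducing corollary $\mathcal{E}_m(w,w)=1/g_m^B(x,x)$. Applying (\ref{key2}) and then (\ref{rrr}),
\[
|1-w(y)|^2 \leq R_m(x,y)\,\mathcal{E}_m(w,w)\leq \frac{C\rho^2(x,y)}{R_m(x,B^c)}.
\]
Using the lower bound $R_m(x,B^c)\geq cr^2$ from (\ref{Rmr2}), this is at most $C\delta^2$ whenever $\rho(x,y)\leq \delta r$. Choosing $\delta$ small forces $w(y)\geq 1/2$, i.e. $g_m^B(x,y)\geq \tfrac12 g_m^B(x,x)$ on $B_\rho(x,\delta r)$. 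Restricting the sum to this smaller ball and invoking $(VD)_\rho$ to replace $V_\rho(x,\delta r)$ by $V_\rho(x,2r)$ up to a constant completes the bound.

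The main technical obstacle is not a computation but a verification that the Cauchy--Schwarz / reproducing machinery, which is classical for the resistance form ($m=1$), goes through with the polyharmonic bilinear form $\mathcal{E}_m$: namely, that $g_m^B(x,\cdot)$ lies in $\mathcal{F}_m$ with the right boundary behavior so that (\ref{key2}) applies to $w$, and that the symmetry $g_m^B(x,y)=g_m^B(y,x)$ holds (needed both for the reproducing identity and to identify $\mathcal{E}_m(g_m^B(x,\cdot),g_m^B(y,\cdot))$ with $g_m^B(x,y)$). Once these structural facts are in place, the argument is a clean combination of (\ref{key2}), (\ref{rg}), (\ref{Rmr2}), (\ref{monot}) and volume doubling.
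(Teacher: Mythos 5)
Your proposal is correct and follows essentially the same architecture as the paper: both directions are reduced to the Green-kernel representation $E_m(x,2r)\asymp\sum_{y\in B}g_m^B(x,y)\mu(y)$ (the paper's identity (\ref{GB}), which you re-derive with an off-by-one handled by the $Q_{m+1}(T)\asymp Q_{m+1}(T-1)$ comparison), and the lower bound — normalizing $g_m^B(x,\cdot)$, applying (\ref{key2}) with $\mathcal{E}_m(g,g)=g_m^B(x,x)\geq cr^2$ from (\ref{Rmr2}), and showing $g_m^B(x,z)\geq\tfrac12 g_m^B(x,x)$ on $B_\rho(x,\delta r)$ before invoking $(VD)_\rho$ — is nearly word-for-word the paper's argument.

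The one place you genuinely diverge, and for the better, is the upper bound. The paper routes it through Lemma \ref{LE<ggm}, $E_m(A|x)\leq g_m^A(x,x)\mu(A)$, whose one-line justification rests on the pointwise inequality $g_m^A(x,y)\leq g_m^A(x,x)$. For $m=1$ that is the maximum principle for the Green function; for $m>1$ there is no maximum principle for polyharmonic kernels and the inequality is not immediate. You instead use Cauchy--Schwarz in $\mathcal{E}_m$ together with the reproducing property and symmetry, $g_m^B(x,y)\leq\sqrt{g_m^B(x,x)\,g_m^B(y,y)}$, and then close the gap by comparing the diagonal values $g_m^B(y,y)=R_m(y,B^c)\leq R_m(y,B_\rho^c(y,Cr))\asymp r^2\asymp g_m^B(x,x)$ via (\ref{monot}) and (\ref{Rmr2}). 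This only uses the bounded-covering / $(VD)_\rho$ hypothesis that is already assumed, so nothing new is needed, and it supplies the justification that the paper's terser lemma glosses over. Your closing caveat about verifying the reproducing-kernel and symmetry facts for $\mathcal{E}_m$ is well placed: those are exactly what the paper's reproducing-kernel lemma and its corollary provide, and they are all you need.
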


Let us recall, that $\mathbb{E}_{m}\left( B|x\right) =\mathbb{E}\left(
Q_{m+1}\left( T_{B}\right) |X_{0}=x\right) $ and Lemma 8.4 from\cite{ln}.

The first lemma is elementary.

\begin{lemma}
\label{Lmmm}Let $B=B_{\rho }\left( x,r\right) ,T=T_{B}$ then%
\[
\mathbb{E}_{x}\left( Q_{m+1}\left( T\right) \right) \asymp \mathbb{E}%
_{x}\left( T^{m}\right) . 
\]
\end{lemma}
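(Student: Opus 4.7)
The plan is a purely algebraic pointwise comparison between $Q_{m+1}(n)$ and $n^{m}$, followed by integration against the distribution of $T$. The first step is to expand the binomial coefficient as
\[
Q_{m+1}(n) \;=\; \binom{n+m}{m} \;=\; \frac{(n+1)(n+2)\cdots(n+m)}{m!},
\]
recognising it as a polynomial of degree $m$ in $n$ with strictly positive coefficients and leading term $n^{m}/m!$. This representation is the only real ingredient.

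Next I would observe that, since $x\in B$ by definition of the connected ball $B=B_{\rho}(x,r)$, the exit time $T=T_{B}$ satisfies $T\geq 1$ almost surely (the walk starts inside $B$ and needs at least one step to leave). For every integer $n\geq 1$ and every index $1\leq i\leq m$ one has $n\leq n+i\leq (m+1)n$, so the factored expression above yields the deterministic two-sided bound
\[
\frac{1}{m!}\, n^{m} \;\leq\; Q_{m+1}(n) \;\leq\; \frac{(m+1)^{m}}{m!}\, n^{m}.
\]

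Applying this inequality pointwise on the full-measure event $\{T\geq 1\}$ and taking $\mathbb{E}_{x}$ of all three sides yields $\mathbb{E}_{x}(Q_{m+1}(T))\asymp \mathbb{E}_{x}(T^{m})$ with constants depending only on $m$. The only point that could be called an obstacle is ensuring $T\geq 1$: the naive bound $n^{m}\leq m!\, Q_{m+1}(n)$ fails at $n=0$ because $0^{m}=0$ while $Q_{m+1}(0)=1$, so one must rule out the contribution of $\{T=0\}$. This is automatic from the definition of the connected component, so no probabilistic input beyond $X_{0}=x\in B$ is needed.
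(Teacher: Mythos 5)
Your proof is correct and takes essentially the same approach as the paper's: both expand $Q_{m+1}(n)=\frac{(n+1)\cdots(n+m)}{m!}$, bound it pointwise above and below by constant multiples of $n^{m}$, and then take expectations against the law of $T$. Your use of the bound $n\leq n+i\leq (m+1)n$ for $n\geq 1$ (valid because $T\geq 1$ almost surely, as $x\in B$) gives a uniform two-sided comparison $\frac{1}{m!}\,n^{m}\leq Q_{m+1}(n)\leq \frac{(m+1)^{m}}{m!}\,n^{m}$ and is a touch cleaner than the paper's version, which first assumes $r$ large enough that $T>2m$ deterministically and then adjusts constants for small values.
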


\begin{proof}
Let $T=T_{B_{\rho }\left( x,r\right) }.$ \ Assume that $r$ is large enough
to ensure $T>2m$ i.e. $B_{d}\left( x,2m+1\right) \subset B_{\rho }\left(
x,r\right) $ and obtain%
\[
\frac{\left( 2T\right) ^{m}}{m!}\geq \frac{\left( T+m\right) ^{m}}{m!}\geq
Q_{m+1}\left( T\right) \geq \frac{\left( T-m\right) ^{m}}{m!}\geq c\left( 
\frac{T-m}{m}\right) ^{m}\geq \left( \frac{T}{2m}\right) ^{m}. 
\]%
For small values the inequality follows by adjusting the constants. \ 
\end{proof}

Of course the statement holds for arbitrary finite set as well.

\begin{remark}
\ From the definitions, the Theorem \ref{TP}, $\left( VD\right) _{\rho }$
and $\left( ER\right) _{\rho }$ it is immediate that 
\begin{eqnarray}
\frac{n^{m}}{k+1} &\geq &q\min_{y\in B_{\rho }\left( x,r\right) }E_{m}\left(
B_{\rho }\left( y,\frac{r}{k}\right) \right) \\
&\geq &cq\min_{y\in B_{\rho }\left( x,r\right) }E_{m}\left( B_{\rho }\left(
y,r\right) \right) k^{-\beta _{m}}, \\
\left( k+1\right) ^{\beta _{m}-1} &\geq &cq\frac{E_{m}\left( B_{\rho }\left( 
\underline{y},r\right) \right) }{n^{m}} \\
k+1 &\geq &c\left( \frac{E_{m}\left( B\right) }{n^{m}}\right) ^{\frac{1}{%
\beta _{m}-1}},
\end{eqnarray}%
where $B=B\left( x,r\right) ,$ which yields%
\begin{eqnarray}
\mathbb{P}_{x}\left( T_{B}<n\right) &\leq &C\exp \left( -c\left( \frac{%
E_{m}\left( B\right) }{n^{m}}\right) ^{\frac{1}{\beta _{m}-1}}\right)
\label{rPE} \\
\mathbb{P}_{x}\left( T_{B}<n\right) &\leq &C\exp \left( -c\left( \frac{%
H\left( x,r\right) }{n^{m}}\right) ^{\frac{1}{\beta _{m}-1}}\right)
\label{rPH}
\end{eqnarray}
\end{remark}

\begin{lemma}
(Feynmann-Kacc formula, c.f.. \cite{GT2} or \cite{ln}) \ Let $f$ be a
function on $\Gamma ,A\subset \Gamma ,\lambda >0$ satisfying 
\[
\Delta f-\lambda f=0\text{ in }B. 
\]%
Then for any $x\in A,\omega =\left( 1+\lambda \right) ^{-1},T=T_{A}$%
\[
f\left( x\right) =\mathbb{E}_{x}\left[ \omega ^{T}f\left( X_{T}\right) %
\right] 
\]%
and for any $m\geq 0$%
\begin{equation}
\left( \left[ \sum \omega ^{n+m}Q_{m}\left( n\right) P_{n}^{A}\right]
f\right) \left( x\right) =\mathbb{E}_{x}\left( Q_{m+1}\left( T\right) \omega
^{T+m}f\left( X_{T}\right) \right) .  \label{glm}
\end{equation}
\end{lemma}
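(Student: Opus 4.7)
The plan is to first rewrite the hypothesis so that the probabilistic structure becomes transparent, and then to handle the two statements in turn. Setting $\omega = (1+\lambda)^{-1}$, the relation $\Delta f - \lambda f = 0$ on $A$ is equivalent to $Pf = (1+\lambda)f$ there, i.e.\ $\omega P f = f$ pointwise on $A$; this is the single algebraic identity that drives both assertions.

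For the first identity I would introduce the stopped process $M_n = \omega^{n \wedge T} f(X_{n \wedge T})$ under $\mathbb{P}_x$ and check that it is a martingale for the natural filtration $\mathcal{F}_n = \sigma(X_0,\ldots,X_n)$. On $\{T > n\}$ we have $X_n \in A$, so by the Markov property and $\omega Pf = f$ on $A$,
\[
\mathbb{E}[M_{n+1} \mid \mathcal{F}_n] = \omega^{n+1}(Pf)(X_n) = \omega^n f(X_n) = M_n,
\]
while on $\{T \leq n\}$ the process is already frozen. In all applications of the lemma $A$ is finite, so $T$ has exponential moments, $f(X_T)$ is bounded, and optional stopping combined with dominated convergence yields $f(x) = M_0 = \lim_n \mathbb{E}_x M_n = \mathbb{E}_x[\omega^T f(X_T)]$.

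For the second identity I would apply the first one along the trajectory via the strong Markov property: given $\mathcal{F}_n$, on $\{T > n\}$ the remaining excursion is a fresh walk started at $X_n \in A$, so the first identity gives $f(X_n) = \mathbb{E}[\omega^{T-n} f(X_T) \mid \mathcal{F}_n]$. Multiplying by $\omega^n$ and taking unconditional expectations,
\[
\omega^n (P_n^A f)(x) = \mathbb{E}_x[\omega^n f(X_n)\mathbf{1}_{\{T > n\}}] = \mathbb{E}_x[\omega^T f(X_T)\mathbf{1}_{\{T > n\}}].
\]
Multiplying by $\omega^m Q_m(n)$, summing over $n$, and interchanging sum and expectation (justified by absolute convergence since $A$ is finite and $\omega < 1$) then gives
\[
\Bigl(\sum_n \omega^{n+m} Q_m(n) P_n^A f\Bigr)(x) = \mathbb{E}_x\Bigl[\omega^{T+m} f(X_T) \sum_{n : T > n} Q_m(n)\Bigr].
\]
The last step is the purely combinatorial hockey-stick identity, which collapses the inner sum over $\{n : T > n\}$ into the single binomial coefficient $Q_{m+1}(T)$ in the paper's indexing convention, completing the proof.

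The main obstacle I anticipate is not probabilistic; the martingale plus strong Markov combination is textbook. What requires care is (i) the interchange of summation and expectation, which rests on $\omega < 1$ and the finiteness of $A$ controlling all $m$-th moments of $T$, and (ii) aligning the binomial-index conventions so that the hockey-stick sum really reproduces $Q_{m+1}(T)$ rather than a shifted version of it. This is book-keeping rather than mathematics.
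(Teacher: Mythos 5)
The paper does not actually supply a proof of this lemma: it is stated as a citation to \cite{GT2} and \cite{ln}. So there is no in-text argument for your proposal to match against; I assess it on its own merits.

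Your argument is correct and is the standard one. The first identity follows as you say from the supermartingale/martingale $M_n=\omega^{n\wedge T}f(X_{n\wedge T})$, whose martingale property on $\{T>n\}$ is exactly the reformulation $\omega Pf=f$ on $A$; optional stopping together with $T<\infty$ a.s.\ and boundedness of $f$ on the finite set $\overline A$ closes it. The second identity then follows, as you describe, by applying the first identity at time $n$ via the Markov property to get $\omega^{n}\bigl(P_n^A f\bigr)(x)=\mathbb{E}_x\bigl[\omega^T f(X_T)\mathbf{1}_{\{T>n\}}\bigr]$, multiplying by $\omega^m Q_m(n)$, summing over $n$ (the interchange being justified by $0<\omega<1$ and the finiteness of $A$), and finally contracting the inner sum via the hockey-stick identity.

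The one caveat you raised is real, not imaginary. With the natural conventions $T=\min\{n\geq 0: X_n\notin A\}$ and $(P_n^A f)(x)=\mathbb{E}_x[f(X_n);T>n]$, the inner sum is
\[
\sum_{n=0}^{T-1}Q_m(n)=\sum_{n=0}^{T-1}\binom{n+m-1}{m-1}=\binom{T+m-1}{m}=Q_{m+1}(T-1),
\]
which differs from the $Q_{m+1}(T)=\binom{T+m}{m}$ in the stated formula by a unit shift of the argument (already visible at $m=1$: $\sum_{n<T}1=T$ versus $Q_2(T)=T+1$). This is a convention-dependent off-by-one in the statement, not a flaw in your method, and it is immaterial for the paper: the only downstream use is the corollary $E_m(A\mid x)=\sum_y G_m^A(x,y)$, which is then fed into Lemma \ref{Lmmm} where only $Q_{m+1}(T)\asymp T^m$ is needed, so the unit shift washes out. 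It would nonetheless be worth stating your final identity with $Q_{m+1}(T-1)$ (or equivalently noting the exit-time convention you are using) so that the computation and the claim agree to the letter.
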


\begin{corollary}
If we choose $f\equiv 1,\lambda =0$ we have from $\left( \ref{glm}\right) $
that%
\begin{equation}
E_{m}\left( A|x\right) =\mathbb{E}_{x}\left( Q_{m+1}\left( T_{A}\right)
\right) =\sum_{y\in B}G_{m}^{A}\left( x,y\right) .  \label{GB}
\end{equation}
\end{corollary}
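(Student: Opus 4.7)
The plan is to obtain (\ref{GB}) by direct substitution into the Feynman--Kac identity (\ref{glm}). I would set $f\equiv 1$ and $\lambda=0$; then $\omega=(1+\lambda)^{-1}=1$, so every power of $\omega$ on both sides disappears. The right-hand side of (\ref{glm}) collapses to
\[
\mathbb{E}_{x}\bigl( Q_{m+1}(T_{A}) \bigr),
\]
which is $E_{m}(A|x)$ by the notational convention introduced before Lemma \ref{p<}, giving the first equality in (\ref{GB}).

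For the second equality I would compute the left-hand side of (\ref{glm}) under the same substitution. Since $f\equiv 1$, $(P_{n}^{A}\mathbf{1})(x)=\sum_{y\in A}P_{n}^{A}(x,y)$, so
\[
\Bigl(\sum_{n}Q_{m}(n)P_{n}^{A}\mathbf{1}\Bigr)(x)
= \sum_{n}Q_{m}(n)\sum_{y\in A}P_{n}^{A}(x,y).
\]
All summands are nonnegative, so Fubini/Tonelli lets me swap the two sums, and the inner sum over $n$ is exactly $G_{m}^{A}(x,y)$ by the definition of the $m$-resolvent Green kernel. This yields $\sum_{y\in A}G_{m}^{A}(x,y)$, which matches the right-hand side of (\ref{GB}) (reading the $B$ there as $A$, the domain of integration inherited from the killed walk on $A$).

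There is no substantive obstacle here: once (\ref{glm}) is in hand the corollary is pure bookkeeping. The single point worth flagging is the finiteness of $\sum_{y\in A}G_{m}^{A}(x,y)$, which is what legitimises the interchange of summations and the identification of the left-hand sides. This finiteness has already been observed before the corollary: for finite $A$ the Dirichlet-killed walk is transient, so $G_{m}^{A}$ is well defined and the whole identity makes sense term by term.
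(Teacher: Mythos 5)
Your proposal is correct and follows the only natural route — substitute $f\equiv 1$, $\lambda=0$ into (\ref{glm}), note $\omega=1$, identify the right-hand side with $E_{m}(A|x)$ by definition, and unwind the left-hand side by Tonelli to get $\sum_{y\in A}G_{m}^{A}(x,y)$; this is exactly the reasoning the paper leaves implicit when it calls the corollary immediate. You also correctly spot that the $B$ in (\ref{GB}) should read $A$, and correctly tie the legitimacy of the sum interchange to the finiteness of $A$ and the transience of the Dirichlet-killed walk.
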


\begin{lemma}
\label{LE<ggm}For any $A\subset \Gamma ,x\in A$%
\begin{equation}
E_{m}\left( A|x\right) \leq g_{m}^{A}\left( x,x\right) \mu \left( A\right) .
\label{E<gm}
\end{equation}
\end{lemma}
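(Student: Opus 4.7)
The plan is to derive the estimate from the probabilistic identity $(\ref{GB})$, reducing it to a pointwise comparison between $g_m^A(x,y)$ and its diagonal value $g_m^A(x,x)$.

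First I would apply the corollary $(\ref{GB})$ to rewrite the mean as
\[
E_m(A|x) \;=\; \sum_{y\in A} G_m^A(x,y) \;=\; \sum_{y\in A} g_m^A(x,y)\,\mu(y),
\]
while the right-hand side of the target inequality can be viewed as $g_m^A(x,x)\mu(A)=\sum_{y\in A} g_m^A(x,x)\mu(y)$. So the task reduces to a (weighted) comparison of $g_m^A(x,\cdot)$ with its value at the diagonal.

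Next I would establish the termwise bound $g_m^A(x,y)\le g_m^A(x,x)$ for $y\in A$. For $m=1$ this is the classical Green-kernel maximum principle: by strong Markov at the first-entrance time $\tau_x$ one has $G^A(y,x)=\mathbb{P}_y(\tau_x<T_A)\,G^A(x,x)\le G^A(x,x)$, and reversibility gives $g^A(x,y)=g^A(y,x)=G^A(y,x)/\mu(x)\le g^A(x,x)$. For general $m$ I would argue inductively using the factorization $G_m^A=G^AG_{m-1}^A$, together with the reproducing-kernel property $g_m^A(x,y)=\mathcal{E}_m(g_m^A(x,\cdot),g_m^A(y,\cdot))$, which yields the Cauchy--Schwarz consequence $g_m^A(x,y)\le\sqrt{g_m^A(x,x)\,g_m^A(y,y)}$. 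Combining this with monotonicity of the resolvent in $m$ lets one propagate the $m=1$ bound to higher orders.

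The conclusion is then immediate: summing the termwise inequality against $\mu(y)$ over $y\in A$ gives
\[
E_m(A|x) \;\le\; g_m^A(x,x)\sum_{y\in A}\mu(y) \;=\; g_m^A(x,x)\,\mu(A).
\]

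The main obstacle is Step 2 for $m\ge 2$: unlike the usual Green function, the polyharmonic kernel $g_m^A(x,\cdot)$ need not attain its maximum at $y=x$ pointwise, so the inductive step on $m$ is delicate. The hard work is to combine the reproducing Cauchy--Schwarz with a careful use of reversibility $\mu(x)G_m^A(x,y)=\mu(y)G_m^A(y,x)$ and the resolvent identity, so as to extract the desired diagonal comparison in a summed/averaged form rather than pointwise.
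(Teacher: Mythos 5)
You take the paper's own starting point: apply $(\ref{GB})$ to write $E_m(A\,|\,x)=\sum_{y\in A}g_m^A(x,y)\mu(y)$ and then try to finish by the pointwise domination $g_m^A(x,y)\le g_m^A(x,x)$. That domination is precisely what the paper's one-line proof ``follows from $(\ref{GB})$'' tacitly assumes, and you are right to single out Step 2 for $m\ge 2$ as the real obstacle. But neither the Cauchy--Schwarz consequence $g_m^A(x,y)\le\sqrt{g_m^A(x,x)\,g_m^A(y,y)}$ nor the factorization $G_m^A=G^A G_{m-1}^A$ can deliver it, because the inequality actually \emph{fails} once $m\ge 2$: the polyharmonic Green kernel $g_m^A(x,\cdot)$ need not peak at $x$, in particular when $x$ lies near $\partial A$, and the Cauchy--Schwarz bound is only useful if one can also control $g_m^A(y,y)/g_m^A(x,x)$, which you do not.

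Concretely, take simple random walk on $\mathbb Z$, $A=\{1,\dots,N\}$, $x=1$, $m=2$. Then $g_2^A(1,1)\asymp N$ and $\mu(A)\asymp N$, so the right-hand side of the lemma is of order $N^2$; but $g_2^A(1,y)$ reaches order $N^2$ for $y$ in the middle of $A$, and correspondingly $E_2(A\,|\,1)\asymp\mathbb E_1\!\left(T_A^2\right)\asymp N^3$ (the walk reaches the far endpoint with probability of order $1/N$ and then needs time of order $N^2$). So both the termwise bound $g_m^A(x,y)\le g_m^A(x,x)$ and the lemma as stated for arbitrary $A$ and $x\in A$ are false for $m\ge 2$, and no averaged replacement is supplied. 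The gap you name is therefore genuine; it is inherited from the paper itself, which in applications only uses $A=B_\rho(x,r)$ centered at $x$. A correct argument must either invoke that extra structure, or replace $g_m^A(x,x)$ by $\sup_{y\in A}g_m^A(x,y)$ (equivalently use $\overline R_m(x,A)$ as in Proposition \ref{ppA}), neither of which your Cauchy--Schwarz/induction route accomplishes.
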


\begin{proof}
The proof follows from $\left( \ref{GB}\right) $.
\end{proof}

\begin{corollary}
\[
E_{m}\left( A|x\right) \leq R_{m}\left( x,A^{c}\right) \mu \left( A\right) 
\]%
and in particular for $x\in \Gamma ,r>0$%
\[
E_{m}\left( x,r\right) \leq Cr^{2}V_{\rho }\left( x,r\right) 
\]
\end{corollary}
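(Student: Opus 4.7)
The plan is to observe that the first inequality is essentially a one-line combination of two results that appear immediately before it, and that the second inequality is then a direct specialization to $\rho$-balls using the earlier upper bound on $R_m(x, B_\rho^c(x,r))$.

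First I would apply Lemma \ref{LE<ggm}, which gives $E_m(A|x) \leq g_m^A(x,x)\,\mu(A)$ for any finite $A \subset \Gamma$ with $x \in A$. Then I invoke the identity $(\ref{rg})$, namely $R_m(x, A^c) = g_m^A(x,x)$, obtained earlier from the observation that the minimizer in the definition of $R_m(x, A^c)$ is realized (up to normalization) by the Dirichlet Green kernel $g_m^A(x, \cdot)$. Substituting yields the first displayed inequality
\[
E_m(A|x) \;\leq\; R_m(x, A^c)\, \mu(A)
\]
at once.

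For the second assertion, I would specialize to $A = B_\rho(x,r)$, so that $\mu(A) = V_\rho(x,r)$ and $A^c = B_\rho^c(x,r)$. The key ingredient here is the upper half of the estimate $(\ref{Rmr2})$, i.e.\ $R_m(x, B_\rho^c(x,r)) \leq C r^2$. This upper bound does not require $(VD)_\rho$: it follows from the comparison $(\ref{rrr})$ between $R_m$ and $\rho^2$ together with the monotonicity $(\ref{monot})$, by picking $S = C r^2$ large enough so that $B_R(x, S) \supset B_\rho(x, r)$ and estimating $R_m(x, B_R^c(x,S)) \leq S$. Plugging this into the first inequality gives $E_m(x,r) \leq C r^2 V_\rho(x,r)$, as claimed.

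There is essentially no obstacle: every ingredient has been set up in the preceding lemmas. The only point worth flagging is the finiteness assumption in Lemma \ref{LE<ggm} and in the definition of $g_m^A$; for the second inequality one should note that when $B_\rho(x,r)$ is finite (which is the case of interest, e.g. under $(VD)_\rho$) the argument is immediate, and the general case follows by exhausting $B_\rho(x,r)$ by finite subsets if necessary and using monotone convergence of $E_m(\cdot|x)$ and monotonicity $(\ref{monot})$ of $R_m$.
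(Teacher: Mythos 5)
Your proposal is correct and takes essentially the same route as the paper: combine Lemma \ref{LE<ggm} with the identity $(\ref{rg})$ to get the first inequality, then specialize to $A=B_\rho(x,r)$ and use the upper half of $(\ref{Rmr2})$ (which, as you correctly note, follows from $(\ref{rrr})$ and $(\ref{monot})$ alone). The paper's own proof is just the terse remark that both statements follow from $(\ref{E<gm})$; you have merely spelled out the implicit steps.
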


Both statements direct consequence of $\left( \ref{E<gm}\right) $.

\begin{proof}
(of Theorem \ref{TERm}). \ The upper \ estimate is provided by Lemma \ref%
{LE<ggm}, for the lower estimate we apply the proof of Proposition 4.2 in 
\cite{Ku1}. Denote $B=B_{\rho }\left( x,2r\right) $. \ We start with $\left( %
\ref{key2}\right) $: If $f\in \mathcal{F}\left( \mathcal{E}_{m}\right) $%
\[
\left\vert f\left( x\right) -f\left( y\right) \right\vert ^{2}\leq
R_{m}\left( x,y\right) \mathcal{E}_{m}\left( f,f\right) 
\]%
in particular let $g\left( z\right) =g_{m}^{B}\left( x,z\right) $ and $z\in
B_{\rho }\left( x,\delta r\right) $ then 
\[
\left\vert g\left( x\right) -g\left( z\right) \right\vert ^{2}\leq
R_{m}\left( x,z\right) \mathcal{E}_{m}\left( g,g\right) . 
\]%
From reproducing property of $g_{m}^{B}\left( x,z\right) $ we have that $%
\mathcal{E}_{m}\left( g,g\right) =R_{m}\left( x,2r\right) =g_{m}^{B}\left(
x,x\right) \geq cr^{2}$%
\[
\left\vert g\left( x\right) -g\left( z\right) \right\vert ^{2}\leq C\delta
^{2}r^{2}=C\delta ^{2}\leq C\delta ^{2}g\left( x\right) ^{2}. 
\]%
We can choose $\delta $ such that $C\delta ^{2}\leq 2,$ and we obtain from $%
g\left( z\right) \leq g\left( x\right) ,$ that for $z\in B_{\rho }\left(
x,\delta r\right) $ \ 
\[
g_{m}^{B}\left( x,z\right) \geq \frac{1}{2}g_{m}^{B}\left( x,x\right) . 
\]%
Now we finish immediately using the definition and $\left( VD\right) _{\rho
} $.%
\begin{eqnarray*}
E_{m}\left( x,2r\right) &=&\sum_{y\in B}g_{m}^{B}\left( x,y\right) \mu
\left( y\right) \geq \sum_{z\in B_{\rho }\left( x,\delta r\right)
}g_{m}^{B}\left( x,z\right) \mu \left( z\right) \\
&\geq &\frac{1}{2}g_{m}^{B}\left( x,x\right) V_{\rho }\left( x,\delta
r\right) \\
&\geq &cR_{m}\left( B_{\rho }\left( x,r\right) ,B_{\rho }^{c}\left(
x,2r\right) \right) V_{\rho }\left( x,2r\right) ,
\end{eqnarray*}%
where the last step follows from $\left( \ref{rg}\right) $ and $\left(
VD\right) _{\rho }$.
\end{proof}

After the above preparations the proof of Theorem \ref{TER} is immediate
from Theorem \ref{TERm} and the next Lemma.

\begin{lemma}
\label{LEQTm} 
\[
E_{\rho }\left( x,r\right) \asymp E_{m}\left( x,r\right) ^{1/m} 
\]
\end{lemma}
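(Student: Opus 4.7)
The plan is to reduce the claim, via Lemma \ref{Lmmm}, to the equivalence $\mathbb{E}_x(T^m)^{1/m} \asymp \mathbb{E}_x(T)$ for $T = T_{B_\rho(x,r)}$, since that Lemma gives $E_m(x,r) \asymp \mathbb{E}_x(T^m)$ while by definition $E_\rho(x,r) = \mathbb{E}_x(T)$. With this reduction in hand, the two inequalities come from quite different sources.

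One direction is immediate from Jensen's inequality: convexity of $t \mapsto t^m$ for $m \geq 1$ gives $(\mathbb{E}_x T)^m \leq \mathbb{E}_x(T^m)$, which when combined with Lemma \ref{Lmmm} yields
\[
E_\rho(x,r)^m \leq C\, E_m(x,r), \qquad \text{i.e.,} \qquad E_\rho(x,r) \leq C'\, E_m(x,r)^{1/m}.
\]
No additional structural hypothesis is needed here.

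For the reverse inequality, the idea is that the lower-tail estimate $(\ref{rPE})$ forces $T$ to be at least of order $E_m(x,r)^{1/m}$ with probability bounded away from zero. Concretely, I would set $n_0 = \varepsilon\, E_m(x,r)^{1/m}$ with $\varepsilon > 0$ small, and plug into $(\ref{rPE})$; the right-hand side becomes $C \exp(-c\,\varepsilon^{-m/(\beta_m-1)})$, which drops below $1/2$ once $\varepsilon$ is chosen small enough (depending only on $C$, $c$, $\beta_m$). Then
\[
E_\rho(x,r) = \mathbb{E}_x(T) \geq n_0\, \mathbb{P}_x(T \geq n_0) \geq \tfrac{\varepsilon}{2}\, E_m(x,r)^{1/m},
\]
which is the desired lower bound.

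The only subtlety I anticipate is verifying that the constants $C$, $c$, and $\beta_m$ entering $(\ref{rPE})$ are genuinely universal — depending only on the doubling constant and on the constants from Theorem \ref{TERm} — so that $\varepsilon$ can be chosen independently of $x$ and $r$. This should be clear from the derivation of $(\ref{rPE})$ in the preceding remark, where the only inputs are $(VD)_\rho$ and $(ER)_m$, both already established without circularity in the argument chain leading to Theorem \ref{TER}.
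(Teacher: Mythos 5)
Your proof is correct and follows essentially the same route as the paper: Jensen's inequality (via Lemma \ref{Lmmm}) for the easy direction, and the tail estimate from Theorem \ref{TP} to lower-bound $\mathbb{P}_x(T \geq n_0)$ at $n_0 \asymp E_m(x,r)^{1/m}$ for the other. The only cosmetic difference is that you invoke $(\ref{rPE})$ where the paper uses the equivalent $(\ref{rPH})$, and your closing remark correctly identifies that the constants come from $(VD)_\rho$ and Theorem \ref{TERm}, so there is no circularity in the chain leading to Theorem \ref{TER}.
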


\begin{proof}
Let $B=B_{\rho }\left( x,r\right) ,T=T_{B}$ From the Jensen inequality we
obtain that for $m\geq 1$%
\[
E_{m}\left( x,r\right) =\mathbb{E}_{x}\left( Q_{m+1}\left( T_{B_{R}\left(
x,r\right) }\right) \right) \asymp \mathbb{E}_{x}\left( T^{m}\right) \geq %
\left[ E_{\rho }\left( x,r\right) \right] ^{m}. 
\]%
For the opposite estimate denote $E=E_{m}\left( x,r\right) $ and 
\begin{eqnarray*}
E_{\rho }\left( x,r\right) &=&\sum_{n}P\left( T>n\right) \geq
\sum_{n=c_{0}\left( E\right) ^{1/m}}^{2c_{0}\left( E\right) ^{1/m}}P\left(
T>n\right) \\
&\geq &c_{0}E^{1/m}P\left( T_{B}>2c_{0}E^{1/m}\right) .
\end{eqnarray*}%
Now we use Theorem \ref{TP}, in particular $\left( \ref{rPH}\right) $ \ 
\begin{equation}
\mathbb{P}_{x}\left( T<n\right) \leq C\exp \left( -c\left( \frac{H\left(
x,r\right) }{n^{m}}\right) ^{\frac{1}{\beta _{m}-1}}\right)
\end{equation}%
\begin{eqnarray*}
&&\mathbb{P}_{x}\left( T\geq n\right) \\
&=&1-\mathbb{P}_{x}\left( T<n\right) \\
&\geq &1-C\exp \left( -c\left( \frac{H\left( x,r\right) }{n^{m}}\right) ^{%
\frac{1}{\beta -1}}\right) \geq 1/2
\end{eqnarray*}%
if we chose $n^{m}\geq H\left( x,r\right) $ and $c_{0}\,$\ such that $%
logC-c\left( \frac{1}{2c_{0}}\right) ^{\frac{1}{\beta -1}}=1/2$ i.e. $c_{0}=%
\frac{1}{2}\left( \frac{c}{\log C-1/2}\right) ^{\beta _{m}-1}$ the proof is
complete.
\end{proof}

\begin{proof}[Proof of Theorem $\left( \protect\ref{TER}\right) $]
The statement is immediate from Lemma \ref{Lmmm}, \ref{LEQTm} and Theorem %
\ref{TERm}.
\end{proof}

\section{Heat kernel estimates}

In this section we show that $\left( VD\right) _{\rho }$ implies the of
off-diagonal upper and near diagonal lower estimates. The proofs are
adaptation of the ones developed in the works \cite{BCK},\cite{Ku1} and \cite%
{K1} all, on resistance forms in case of recurrent (or strongly recurrent)
spaces, graphs. \ 

It is standard to deduce a diagonal lower estimate from $\left( \ref{P>c}%
\right) ,$ (see also in \cite{ln} Theorem 6.2 ).

\begin{proposition}
If $\left( VD\right) _{\rho }$ holds there is a $c>0$ such that for all $n>0$
\begin{equation}
p_{2n}\left( x,x\right) \geq \frac{c}{V_{\rho }\left( x,f\left( x,n\right)
\right) },  \tag{DLE}  \label{DLE}
\end{equation}%
where $f\left( x,.\right) $ is the inverse of $F\left( x,.\right) $ in the
second variable.
\end{proposition}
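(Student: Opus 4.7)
The plan is a two-line Cauchy-Schwarz bound fed by the exit time anti-concentration $\left(\ref{P>c}\right)$. By reversibility and the Chapman-Kolmogorov identity, $p_{2n}(x,x)=\sum_y p_n(x,y)^2\mu(y)$, so for any set $B\ni x$,
$$p_{2n}(x,x)\;\geq\;\sum_{y\in B}p_n(x,y)^2\mu(y)\;\geq\;\frac{\mathbb{P}_x(X_n\in B)^2}{\mu(B)},$$
where the second step is Cauchy-Schwarz applied to $\mathbb{P}_x(X_n\in B)=\sum_{y\in B}p_n(x,y)\mu(y)$. I will take $B=B_\rho(x,r)$.

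Next I match the radius $r$ to the time $n$. Writing $E_m(x,r)\asymp H(x,r)=F(x,r)^m$ via the Einstein relation (Theorem \ref{TER}), the prescription $n=\left(\tfrac12 C_0^{-2}E_m(x,r)\right)^{1/m}$ from the corollary to Lemma \ref{p<} is equivalent to $F(x,r)\asymp n$, i.e.\ $r\asymp f(x,n)$. That corollary then delivers $\mathbb{P}_x(T_B\geq n)\geq c_0$. Since $x\in B$ and $T_B$ is the first exit time, the event $\{T_B\geq n\}$ is contained in $\{X_n\in B\}$, so $\mathbb{P}_x(X_n\in B)\geq c_0$. Substituting into the display gives $p_{2n}(x,x)\geq c_0^2/V_\rho(x,r)$.

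To finish, I need $V_\rho(x,r)\asymp V_\rho(x,f(x,n))$. Because $F(x,r)^m=r^2V_\rho(x,r)$ grows at least quadratically in $r$, doubling of $r$ increases $F$ by a definite factor bigger than $1$, so the inverse $f(x,\cdot)$ is quasi-doubling; combined with $\left(VD\right)_\rho$, comparability of the arguments transfers to comparability of volumes and absorbs the constants. This is the only point in the argument that is not entirely mechanical, and it is the place where $\left(VD\right)_\rho$ enters in a nontrivial way; everything else is reversibility, Cauchy-Schwarz, and already-established results.
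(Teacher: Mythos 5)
Your argument is the standard Cauchy--Schwarz deduction from the exit-time lower bound $(\ref{P>c})$, which is precisely what the paper's one-line proof (``It is standard to deduce a diagonal lower estimate from $(\ref{P>c})$'') defers to. One small correction: $\{T_B \geq n\}$ is \emph{not} contained in $\{X_n \in B\}$, since on the event $\{T_B = n\}$ we have $X_n \notin B$; the correct containment is $\{T_B > n\} \subset \{X_n \in B\}$. This is harmless: choose $r$ so that the corollary to Lemma \ref{p<} applies at time $n+1$, giving $\mathbb{P}_x(T_B > n) = \mathbb{P}_x(T_B \geq n+1) \geq c_0$ and hence $\mathbb{P}_x(X_n \in B) \geq c_0$; the shift from $n$ to $n+1$ is absorbed by $(VD)_\rho$ and the doubling of $F$ exactly as you argue at the end. (Also, the Einstein relation you invoke for $E_m$ is Theorem \ref{TERm} combined with $(\ref{Rmr2})$, not Theorem \ref{TER}.)
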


\begin{remark}
\ If we consider the example, $V_{\rho }\left( x,r\right) \asymp r^{\alpha }$
then $H\left( x,r\right) =r^{2}V_{\rho }\left( x,r\right) \asymp r^{\alpha
+2},$ $F\left( x,r\right) \asymp r^{\frac{\alpha +2}{m}},f\left( x,n\right)
\asymp n^{\frac{m}{\alpha +2}}$%
\[
p_{2n}\left( x,x\right) \geq cn^{-\frac{\alpha }{\left( \alpha +2\right) }%
m}. 
\]%
Thus 
\[
\sum_{n}n^{m}p_{n}\left( x,x\right) =\infty . 
\]
\end{remark}

\begin{remark}
One may object that $m\left( 1-\frac{\alpha }{\left( \alpha +2\right) }%
\right) >-1$ holds for all $m$, that seemingly contradicts to the initial
argument, which indicates that $m$ had to be chosen enough large. \ Let us
notice, that if $m$ is not enough large then the whole above argument is
meaningless, the resolvent operator $G_{m}$ is bounded and we can not
subtract the needed asymptotic information from it. Among others the notion $%
R_{m}\left( x,y\right) $ similarly to the usual resistance metric $R\left(
x,y\right) $ in \ the transient case is meaningless, furthermore the key
observations $\left( \ref{key1}\right) \ $\ and $\left( \ref{key2}\right) $
are not in our possession.
\end{remark}

\subsection{Estimates of higher order time derivatives of the heat kernel
and relation to $\mathbb{\mathcal{E}}_{m}$}

A fairly simple but powerful method is developed in the mentioned works (see
also \cite{BJKS}). The key observation is the following (see for the
simplified proof \cite{K1} Theorem 10.4). Without any further assumption for
any finite set $A\subset \Gamma $%
\[
p_{n}\left( x,x\right) \leq \frac{2\overline{R}\left( x,A\right) }{n}+\frac{%
\sqrt{2}}{\mu \left( A\right) }, 
\]%
where $\overline{R}(x,A)=\sup_{y\in A}R(x,y).$ \ We have the following
version of the statement.

\begin{proposition}
\label{ppA}There is a $C>0$ such that, for any finite set $A\subset \Gamma $
and $x\in A,n>0$ integer%
\[
p_{n}\left( x,x\right) \leq C\left( \frac{\overline{R}_{m}\left( x,A\right) 
}{n^{m}}+\frac{1}{\mu \left( A\right) }\right) , 
\]%
where $\overline{R}_{m}(x,A)=\sup_{y\in A}R_{m}(x,y).$
\end{proposition}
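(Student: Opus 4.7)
The plan is to adapt Kigami's proof of the $m=1$ resistance-form estimate (\cite{K1} Theorem~10.4) to the present $m$-resolvent setting, with the Dirichlet form $\mathcal{E}_{1}$ and the resistance metric $R$ replaced by $\mathcal{E}_{m}$ and $R_{m}$. First I would apply the key inequality (\ref{key2}) to $u(\cdot):=p_{n}(x,\cdot)$, which lies in $\mathcal{F}_{m}$: for every $y\in A$,
\[
(p_{n}(x,x)-p_{n}(x,y))^{2}\;\le\;R_{m}(x,y)\,\mathcal{E}_{m}(u,u)\;\le\;\overline{R}_{m}(x,A)\,\mathcal{E}_{m}(u,u).
\]
Expanding the square and dropping the non-negative term $p_{n}(x,y)^{2}$ gives $p_{n}(x,x)^{2}\le 2\,p_{n}(x,x)\,p_{n}(x,y)+\overline{R}_{m}(x,A)\,\mathcal{E}_{m}(u,u)$. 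Multiplying by $\mu(y)$, summing over $y\in A$, and using $\sum_{y\in A}p_{n}(x,y)\mu(y)\le 1$ yields
\[
\mu(A)\,p_{n}(x,x)^{2}\;\le\;2\,p_{n}(x,x)+\overline{R}_{m}(x,A)\,\mathcal{E}_{m}(p_{n}(x,\cdot))\,\mu(A).
\]

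The critical step is to estimate $\mathcal{E}_{m}(p_{n}(x,\cdot))$. Using $Pu_{n}=u_{n+1}$ and the binomial expansion of $(I-P)^{m}$ one obtains
\[
\mathcal{E}_{m}(p_{n}(x,\cdot))\;=\;\sum_{k=0}^{m}(-1)^{k}\binom{m}{k}\,p_{2n+k}(x,x),
\]
an $m$-fold discrete difference of the diagonal heat kernel. Via the spectral theorem applied to the self-adjoint contraction $P$, this equals $\int_{-1}^{1}(1-\lambda)^{m}\lambda^{2n}\,d\nu_{x}(\lambda)$, where $\nu_{x}$ is the spectral measure at $x$. The elementary calculus bound $(1-\lambda)^{m}\lambda^{2n}\le (m/(2n))^{m}$ on $[0,1]$, together with the monotonicity of $k\mapsto p_{2k}(x,x)$ and the telescoping identity $\sum_{k\ge n}(p_{2k}(x,x)-p_{2k+2}(x,x))\le p_{2n}(x,x)$, should produce a bound of the form $\mathcal{E}_{m}(p_{n}(x,\cdot))\le C\,p_{n}(x,x)/n^{m}$.

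Substituting this estimate into the preceding display and dividing by $p_{n}(x,x)$ (trivial when it vanishes) yields $p_{n}(x,x)\le 2/\mu(A)+C\,\overline{R}_{m}(x,A)/n^{m}$, the claimed inequality up to the absolute constant. The main obstacle will be the $\mathcal{E}_{m}$--estimate: for $m=1$ the telescoping is almost immediate, but for general $m$ one must iterate Abel summation $m$ times (matching the $Q_{m}$ weights built into $G_{m}$ and $R_{m}$), and one must also control the contribution of the spectrum of $P$ near $\lambda=-1$, where the integrand does not decay in $n$. This subtlety is specific to discrete time and bipartite-type structures; it can be handled either by passing to the auxiliary operator $P^{2}$, whose spectrum lies in $[0,1]$, or, when $(p_{0})$ together with the loop structure provides enough laziness, by a direct spectral gap at $-1$.
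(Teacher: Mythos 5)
Your architecture matches the paper's: apply~(\ref{key2}) to $p_n(x,\cdot)$ to control $|p_n(x,x)-p_n(x,y)|^2$ by $\overline{R}_m(x,A)\,\mathcal{E}_m(p_n)$, use total mass $\sum_{y\in A}p_n(x,y)\mu(y)\le 1$ to produce the $1/\mu(A)$ term, bound $\mathcal{E}_m(p_n(x,\cdot))$ by $\sim p_{2n}(x,x)/n^m$, and close a quadratic inequality. The paper tests~(\ref{key2}) only at the minimizer $y^*=\mathrm{argmin}_{y\in A}p_{2n}(x,y)$ (yielding $p_{2n}(x,y^*)\le 1/\mu(A)$ and then $\tfrac12 p_{2n}^2\le p_{2n}^2(x,y^*)+\overline{R}_m\mathcal{E}_m$), while you average over all $y\in A$; both are fine. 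The substantive divergence is in the $\mathcal{E}_m$-estimate: the paper proves Lemma~\ref{pdiff} by an elementary inductive/telescoping argument and applies it through Lemma~\ref{Ledif}, whereas you invoke the spectral theorem. Your binomial computation $\mathcal{E}_m(p_n(x,\cdot))=\sum_{k=0}^m(-1)^k\binom{m}{k}p_{2n+k}(x,x)$ is the one that actually follows from $\mathcal{E}_m=((I-P)^m\cdot,\cdot)$ (the paper's Lemma~\ref{Ledif} writes $f_{2n+2i}$, which is what $((I-P^2)^m\cdot,\cdot)$ would give; this mismatch is worth flagging).

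The gap you identify near the bottom of the spectrum is genuine and not merely cosmetic, and your proposal does not resolve it. Spectrally $\mathcal{E}_m(p_n(x,\cdot))=\int_{-1}^1(1-\lambda)^m\lambda^{2n}\,d\nu_x$, and for $\lambda$ near $-1$ the integrand stays of order $2^m$ as $n\to\infty$, so the bound $\mathcal{E}_m(p_n)\le C\,p_{2n}(x,x)/n^m$ can fail. A concrete instance already at $m=1$: on $\mathbb{Z}$ with the simple walk, $\mathcal{E}_1(p_{2n}(0,\cdot))=p_{4n}(0,0)-p_{4n+1}(0,0)=p_{4n}(0,0)\sim n^{-1/2}$, while $p_{2n}(0,0)/n\sim n^{-3/2}$; the wanted bound is off by a full factor of $n$. (The Proposition itself still holds on $\mathbb{Z}$, so it is the intermediate step, not the statement, that needs repair.) The remedies you sketch — replacing $P$ by $P^2$, or invoking laziness — are the right direction but are not free: $R_m$ and~(\ref{key2}) are built from $(I-P)^m$, not $(I-P^2)^m$, so one must either redefine the quasi-metric from the spectrally even operator and re-derive the surrounding lemmas, or run the whole argument for $\widetilde p_n=p_n+p_{n+1}$, or add a hypothesis giving a spectral gap at $-1$ (the stated $(p_0)$ alone does not, since it allows bipartite graphs). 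You have correctly diagnosed the obstruction — which, as noted, is also latent in the paper's own Lemmas~\ref{Ledif}--\ref{pdiff} — but a complete proof requires one of these repairs carried through in detail, not just named.
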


Before we prove the statement we show how one can obtain the diagonal upper
bound from it.

\begin{theorem}
\label{tDUE}Assume $\left( VD\right) _{\rho }$ then%
\begin{equation}
p_{n}\left( x,x\right) \leq \frac{C}{V_{\rho }\left( x,f\left( x,n\right)
\right) },  \label{DUE1}
\end{equation}%
where $f\left( x,n\right) $ is the inverse of $F\left( x,r\right) $ in the
second variable, furthermore%
\begin{equation}
p_{n}\left( x,y\right) \leq \frac{C}{\sqrt{V_{\rho }\left( x,f\left(
x,n\right) \right) V_{\rho }\left( y,f\left( y,n\right) \right) }}.
\label{PUE1}
\end{equation}
\end{theorem}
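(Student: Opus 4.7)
The plan is to feed the right set $A$ into Proposition \ref{ppA} so that both terms match the desired bound, and then derive the off-diagonal estimate by the standard Cauchy--Schwarz trick.

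Concretely, for the diagonal estimate, I take $A = B_\rho(x, f(x,n))$. The two ingredients to control are $\overline{R}_m(x,A)$ and $\mu(A)$. For the former, since $y \in A$ implies $\rho(x,y) < f(x,n)$, Corollary \ref{Rmr2} gives
\[
\overline{R}_m(x,A) \;=\; \sup_{y \in A} R_m(x,y) \;\leq\; R_m\bigl(x, B_\rho^c(x, f(x,n))\bigr) \;\leq\; C f(x,n)^2.
\]
For the latter, $\mu(A) = V_\rho(x, f(x,n))$ by definition. Now the defining relation $F(x,r) = (r^2 V_\rho(x,r))^{1/m}$ together with $F(x, f(x,n)) = n$ gives the key identity
\[
f(x,n)^2 \, V_\rho(x, f(x,n)) \;=\; n^m,
\]
so that $f(x,n)^2 / n^m = 1 / V_\rho(x, f(x,n))$. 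Plugging these into Proposition \ref{ppA} yields
\[
p_n(x,x) \;\leq\; C\!\left(\frac{C f(x,n)^2}{n^m} + \frac{1}{V_\rho(x, f(x,n))}\right) \;\leq\; \frac{C'}{V_\rho(x, f(x,n))},
\]
which is (\ref{DUE1}).

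For the off-diagonal estimate (\ref{PUE1}), I use reversibility and Cauchy--Schwarz in the standard way. From the semigroup identity and reversibility,
\[
p_{2n}(x,y) \;=\; \sum_{z} p_n(x,z)\, p_n(z,y)\, \mu(z) \;\leq\; \sqrt{p_{2n}(x,x)}\,\sqrt{p_{2n}(y,y)},
\]
and an analogous splitting handles odd times (writing $n = \lfloor n/2 \rfloor + \lceil n/2 \rceil$ and absorbing the one-step factor, using condition $(p_0)$ if necessary). Applying the already-proven diagonal bound at $x$ and at $y$ yields the off-diagonal estimate, modulo replacing $f(x,2n)$ by $f(x,n)$ up to a constant. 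This last step is where I would be slightly careful: I need $V_\rho(x, f(x,cn)) \asymp V_\rho(x, f(x,n))$, which follows from $(VD)_\rho$ together with the doubling-type control that $F \in W_0$ forces on $f$ (i.e.\ $f(x,cn) \asymp f(x,n)$ up to constants depending on $c$).

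The main obstacle is not the computation itself but making sure the choice $A = B_\rho(x, f(x,n))$ is admissible in Proposition \ref{ppA} (finiteness of $A$, which holds on any locally finite graph for bounded $\rho$-radius, and nontriviality so that $\overline{R}_m(x,A)$ is finite) and that the constants from Corollary \ref{Rmr2} and from $(VD)_\rho$ compose cleanly. In particular, one should verify that $R_m(x, B_\rho^c(x,r)) \lesssim r^2$ holds uniformly in $x$, which is the upper half of (\ref{Rmr2}) and requires no extra work. Everything else is bookkeeping: invoking $(VD)_\rho$ once to compare volumes at $f(x,n)$ and $f(x,2n)$, and using the identity $F(x, f(x,n)) = n$.
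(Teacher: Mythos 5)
Your overall strategy coincides with the paper's: plug $A=B_\rho(x,f(x,n))$ into Proposition \ref{ppA}, use $F(x,f(x,n))=n$ to equalize the two terms, and pass to odd $n$ and off-diagonal via reversibility plus Cauchy--Schwarz. The bookkeeping about $f(x,cn)\asymp f(x,n)$ (coming from $F\in W_0$ and the trivial lower bound $F(x,2r)\geq 4^{1/m}F(x,r)$) is also the right thing to do.

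One step as written is not correct, although the conclusion it leads to is. You assert
\[
\overline{R}_m(x,A)=\sup_{y\in A}R_m(x,y)\;\leq\;R_m\bigl(x,B_\rho^c(x,f(x,n))\bigr),
\]
citing Corollary \ref{Rmr2}. The point-to-point quantity $\sup_{y\in B}R_m(x,y)$ and the point-to-complement quantity $R_m(x,B^c)$ are different objects, and no inequality between them is proved in the paper (nor holds in general: already for the $m=1$ resistance metric on $\mathbb{Z}$ one has $R(0,n-1)=n-1$ but $R(0,\{-n,n\})=n/2$). What you actually need is the direct consequence of $\left(\ref{rrr}\right)$: for $y\in A=B_\rho(x,f(x,n))$ one has $\rho(x,y)<f(x,n)$, hence $R_m(x,y)\leq C\rho^2(x,y)<Cf(x,n)^2$. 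This gives $\overline{R}_m(x,A)\leq Cf(x,n)^2$ with no detour through $R_m(x,B^c)$, and the rest of your computation is then correct. Replace the mis-cited intermediate inequality with this one-line application of $\left(\ref{rrr}\right)$ and the proof matches the paper's.
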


\begin{proof}
\ Let $A=B=B_{\rho }\left( x,r\right) $ and choose $r$ to have $\frac{C%
\overline{R}_{m}\left( x,B\right) }{n^{m}}=\frac{C^{\prime }}{V_{\rho
}\left( x,r\right) }$ $\ $and $n=\left\lceil Cr^{2}V_{\rho }\left(
x,r\right) \right\rceil ^{1/m}$ then%
\[
p_{2\left[ Cr^{2}V_{\rho }\left( x,r\right) \right] ^{1/m}}\left( x,x\right)
\leq \frac{C}{V_{\rho }\left( x,r\right) } 
\]%
\[
p_{2n}\left( x,x\right) \leq \frac{C}{V_{\rho }\left( x,f\left( x,n\right)
\right) }. 
\]%
This shows the statement for even $n,$ for odd $n$ it can be seen together
with \ $\left( \ref{PUE1}\right) $ as in \cite{GT2}.
\end{proof}

\begin{definition}
Let us introduce the time differential operator and its iterations for $%
k\geq 1,n\geq 0$%
\begin{eqnarray*}
\left( Df\right) _{n} &=&f_{n+2}-f_{n} \\
\left( D^{k}f\right) _{n} &=&\left( D\left( D^{k-1}f\right) \right) _{n} \\
f_{n}^{\left( k\right) } &=&\left( D^{k}f\right) _{n}.
\end{eqnarray*}
\end{definition}

\begin{lemma}
\label{pdiff}Let \ \ \ $f_{n}\left( y\right) =p_{n}\left( x,y\right) $ then
there is a $C>0$ such that for all $k\geq 0,n>0,x\in \Gamma $%
\begin{equation}
\left( -1\right) ^{k}f_{4n}^{\left( k\right) }\left( x\right) \leq \frac{1}{%
\left( 2n\right) ^{k}}f_{2n}\left( x\right)  \label{df}
\end{equation}
\end{lemma}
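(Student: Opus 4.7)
My plan is to use the spectral theorem for $P$. Since $P$ is self-adjoint on $l^2(\Gamma,\mu)$ with $\|P\|\leq 1$, there is a positive spectral measure $\nu_x$ on $[-1,1]$ such that $p_n(x,x)\mu(x)^2 = \int_{-1}^1 \lambda^n\,d\nu_x(\lambda)$. The difference operator $D$ corresponds on the spectral side to multiplication of the integrand by $\lambda^2-1$, so
\[
(-1)^k f_{4n}^{(k)}(x)\,\mu(x)^2 \;=\; \int_{-1}^1 \lambda^{4n}(1-\lambda^2)^k\,d\nu_x(\lambda),
\]
which is non-negative on $[-1,1]$, matching the $(-1)^k$ sign on the left-hand side.

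For the magnitude bound I would factor $\lambda^{4n}(1-\lambda^2)^k = \lambda^{2n}\cdot\bigl[\lambda^{2n}(1-\lambda^2)^k\bigr]$ and bound the bracket pointwise. Setting $t=\lambda^2\in[0,1]$, calculus shows that $t^n(1-t)^k$ is maximised at $t^\ast=n/(n+k)$ with value $n^n k^k/(n+k)^{n+k}$; after absorbing $k$-dependent factors into the constant $C$ advertised by the lemma, this gives $\lambda^{2n}(1-\lambda^2)^k \leq C/(2n)^k$. Integrating against $d\nu_x$ then yields
\[
(-1)^k f_{4n}^{(k)}(x) \;\leq\; \frac{C}{(2n)^k\,\mu(x)^2}\int_{-1}^1 \lambda^{2n}\,d\nu_x \;=\; \frac{C}{(2n)^k}\,f_{2n}(x).
\]

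The main obstacle is that the naive pointwise inequality $(2n)^k t^n(1-t)^k \leq 1$ fails uniformly in $k$ (the maximum above decays only like $(k/(en))^k$), so the constant $C$ truly must absorb $k$-dependent factors. A cleaner alternative is induction on $k$ based on the auxiliary sequence $g_k(m):=(-1)^k f_{2m}^{(k)}(x) = \mu(x)^{-2}\int\lambda^{2m}(1-\lambda^2)^k\,d\nu_x$, which is non-negative, non-increasing and convex in $m$ (each property inherited pointwise from the integrand). Consequently its forward differences $g_{k+1}(m)=g_k(m)-g_k(m+1)$ are non-negative and non-increasing, which gives the one-step estimate
\[
n\cdot g_{k+1}(2n)\;\leq\;\sum_{j=n}^{2n-1}g_{k+1}(j)\;=\;g_k(n)-g_k(2n)\;\leq\;g_k(n),
\]
that is, $g_{k+1}(2n)\leq g_k(n)/n$; iterating this together with the inductive hypothesis (and tracking the geometric accumulation of constants into $C$) closes the induction.
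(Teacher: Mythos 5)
Your spectral reading is exactly right and matches the paper's starting point: writing $p_n(x,x)$ via the spectral measure $\nu_x$ on $[-1,1]$, the operator $D$ corresponds to multiplication by $\lambda^2-1$, so $(-1)^k f_{2n}^{(k)}(x)$ is a non-negative, non-increasing, convex sequence in $n$. The paper leans on these same sign and monotonicity facts.

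Your first route (pointwise maximization of $t^n(1-t)^k$) is sound as stated, but the constant it produces grows like $(2k/e)^k$; you flag this honestly. Since the paper only invokes the lemma for the single fixed value $k=m$, a $k$-dependent constant is perfectly acceptable for the application, and in fact the paper's own proof also produces a $k$-dependent constant that got absorbed into the imprecise statement of the lemma. So as a means to the paper's end this works, and it is a genuinely different and much shorter argument than the paper's.

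Your "cleaner alternative," however, has a real gap. From $n\cdot g_{k+1}(2n)\leq\sum_{j=n}^{2n-1}g_{k+1}(j)\leq g_k(n)$ you get $g_{k+1}(2n)\leq g_k(n)/n$, which halves the time index at every step of the iteration. Unrolling $k$ steps lands you on $g_0(n/2^{k-1})=p_{n/2^{k-2}}(x,x)$, which is \emph{larger} than the desired $g_0(n)=p_{2n}(x,x)$; without any doubling hypothesis (and the lemma is stated unconditionally) these two quantities are not comparable by a constant. No amount of "tracking the geometric accumulation of constants" repairs this, because what is accumulating geometrically is the shrinkage of the time index, not merely numerical prefactors. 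The paper avoids this by averaging $g_{k+1}$ over a window of length $\lfloor c_k n\rfloor$ with $c_k=2^{-(k+2)}$ rather than over a window of length $n$: the one-step estimate then becomes $g_{k+1}(n)\leq g_k(n-\lfloor c_k n\rfloor)/\lfloor c_k n\rfloor$, and since $\sum_k c_k<1/2$ the cumulative reduction in the argument of $g_0$ stays below $n$, leaving $f$ evaluated at an index $\geq 2n$. That geometric choice of window length is the missing idea; with it your convexity-based induction becomes exactly the paper's proof.
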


\begin{proof}
From the spectral decomposition of $p_{2n}\left( x,x\right) $ we know that $%
h_{2n}^{\left( k\right) }=\left( -1\right) ^{k}f_{n}^{\left( k\right)
}\left( x\right) \geq 0$ for all $k\geq 0$ and the same implies that the map 
$n\rightarrow \left( -1\right) \left( h_{2n+2}^{\left( k-1\right)
}-h_{2n}^{\left( k-1\right) }\right) \left( x\right) $ non-decreasing. \ We
show the statement by induction using a slightly stronger statement. \
Assume it holds for all $0\leq i<k,$ 
\[
h_{4n}^{\left( i\right) }\leq \frac{1}{\left( 4\left\lfloor
S_{i}n\right\rfloor \right) ^{i}}f_{4n-4\left\lfloor S_{i}n\right\rfloor
}\left( x\right) , 
\]
where $c_{i}=2^{-\left( i+2\right) }$ , $S_{k}=S_{k-1}+c_{k},$ and note that
for $i=0$ the assumption holds. \ 
\begin{eqnarray*}
h_{4n}^{\left( k\right) } &=&\left[ h_{4n}^{\left( k-1\right)
}-h_{4n+2}^{\left( k-1\right) }\right] \\
&\leq &\frac{1}{4\left\lfloor c_{k}n\right\rfloor }\sum_{i=0}^{4\left\lfloor
c_{k}n\right\rfloor }\left[ h_{4n-2i}^{\left( k-1\right)
}-h_{4n+2-2i}^{\left( k-1\right) }\right] \\
&\leq &\frac{1}{4\left\lfloor c_{k}n\right\rfloor }\left[ h_{4n-4\left%
\lfloor c_{k}n\right\rfloor }^{\left( k-1\right) }-h_{2n+2}^{\left(
k-1\right) }\right] \\
&\leq &\frac{1}{4\left\lfloor c_{k}n\right\rfloor }h_{4n-4\left\lfloor
c_{k}n\right\rfloor }^{\left( k-1\right) }
\end{eqnarray*}%
now by induction, if $m=n-\left\lfloor c_{k}n\right\rfloor $ 
\[
h_{4n}^{\left( k\right) }\leq \frac{1}{4m}h_{4m}^{\left( k-1\right) }\leq 
\frac{1}{4m}\frac{1}{\left( 4\left\lfloor S_{k-1}m\right\rfloor \right)
^{k-1}}f_{4m-4\left\lfloor S_{k-1}m\right\rfloor }\left( x\right) 
\]%
Let us recall that $f_{2k}\left( x\right) $ is non-increasing in $k$ and
find that%
\begin{eqnarray*}
4m-4\left\lfloor S_{k-1}m\right\rfloor &=&4\left( n-\left\lfloor
c_{k}n\right\rfloor \right) -4\left\lfloor S_{k-1}\left( n-\left\lfloor
c_{k}n\right\rfloor \right) \right\rfloor \\
&\geq &4\left( n-\left\lfloor c_{k}n\right\rfloor \right) -4\left\lfloor
S_{k-1}n\right\rfloor \\
&\geq &4n-4\left( \left\lfloor c_{k}n\right\rfloor +\left\lfloor
S_{k-1}n\right\rfloor \right) \\
&\geq &4n-4\left\lfloor \left( S_{k-1}+c_{k}\right) n\right\rfloor \\
&=&4n-4\left\lfloor S_{k}n\right\rfloor
\end{eqnarray*}%
which leads to the needed inequality. 
\[
h_{4n}^{\left( k\right) }\leq \frac{1}{\left( 4\left\lfloor
S_{k}n\right\rfloor \right) ^{k}}f_{4n-4\left\lfloor S_{k}n\right\rfloor
}\left( x\right) . 
\]%
Finally observing that $S_{k}=\sum_{i=0}^{k}2^{-\left( i+2\right) }$ we have
that $S_{k}<\frac{1}{2}$ and we obtain $\left( \ref{df}\right) $.
\end{proof}

\begin{lemma}
\label{Ledif}Again, if $f_{n}\left( y\right) =p_{n}\left( x,y\right) $, then%
\[
\mathcal{E}_{m}\left( f_{n},f_{n}\right) =\left( -1\right) ^{m}\left(
D^{m}f\right) _{2n}\left( x\right) =\left( -1\right) ^{m}f_{2n}^{\left(
m\right) }\left( x\right) 
\]
\end{lemma}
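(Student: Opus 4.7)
The plan is to expand $(I-P)^m$ binomially inside the inner product defining $\mathcal{E}_m$ and then identify each resulting summand as a diagonal value of the heat kernel.

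First I would establish the semigroup identity $Pf_n = f_{n+1}$. This is where reversibility enters: detailed balance $\mu(y)P(y,z)=\mu(z)P(z,y)$ gives
\[
(Pf_n)(y) = \sum_z P(y,z)\,p_n(x,z) = \frac{1}{\mu(y)}\sum_z \mu(z)\,p_n(x,z)\,P(z,y),
\]
and the right-hand side is $P_{n+1}(x,y)/\mu(y) = p_{n+1}(x,y)$. Iterating yields $P^k f_n = f_{n+k}$ for every $k\ge 0$.

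With that in hand, the binomial theorem together with self-adjointness of $P$ on $\ell^{2}(\Gamma,\mu)$ produces
\[
\mathcal{E}_m(f_n,f_n) = \bigl((I-P)^m f_n,\, f_n\bigr) = \sum_{k=0}^{m}\binom{m}{k}(-1)^k (f_{n+k},\, f_n).
\]
Each of the inner products collapses by Chapman--Kolmogorov combined with the symmetry $p_n(x,y)=p_n(y,x)$:
\[
(f_{n+k},\, f_n) = \sum_y p_{n+k}(x,y)\,p_n(y,x)\,\mu(y) = p_{2n+k}(x,x) = f_{2n+k}(x).
\]
Substituting, $\mathcal{E}_m(f_n,f_n) = \sum_{k=0}^{m}\binom{m}{k}(-1)^k f_{2n+k}(x)$, which is exactly $(-1)^m(D^m f)_{2n}(x)$ by definition of the iterated discrete difference operator applied to the sequence $j\mapsto f_j(x)$ at the point $j=2n$.

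There is no serious obstacle. The only ingredients requiring care are the semigroup identity $Pf_n=f_{n+1}$---the precise point where reversibility is used, so that iterated applications of $P$ correspond to time-shifts of the heat kernel---and the sign bookkeeping in the binomial expansion, where the overall factor $(-1)^m$ matches the sign convention of $(D^m f)_{2n}$.
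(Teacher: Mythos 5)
Your computation up to the identity $(f_{n+k},f_n)=p_{2n+k}(x,x)=f_{2n+k}(x)$ is correct, and in fact it is more careful than the paper's own proof, which at the analogous line asserts $(f_{n+i},f_n)=f_{2n+2i}(x)$: the Chapman--Kolmogorov step produces a time shift of $k$, not $2k$, so your version is the right one. The genuine gap is the last sentence. You claim that
\[
\mathcal{E}_m(f_n,f_n)=\sum_{k=0}^{m}\binom{m}{k}(-1)^k f_{2n+k}(x)
\]
``is exactly $(-1)^m(D^m f)_{2n}(x)$ by definition of the iterated discrete difference operator.'' It is not, because $D$ is defined in this paper with a step of \emph{two}, $(Df)_n=f_{n+2}-f_n$, which unwinds to
\[
(-1)^m(D^m f)_{2n}(x)=\sum_{k=0}^m\binom{m}{k}(-1)^k f_{2n+2k}(x),
\]
involving $f_{2n+2k}$ rather than $f_{2n+k}$.

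These two sums do not agree in general. Already at $m=1$ your computation gives $\mathcal{E}_1(f_n,f_n)=p_{2n}(x,x)-p_{2n+1}(x,x)$, while $(-1)(Df)_{2n}(x)=p_{2n}(x,x)-p_{2n+2}(x,x)$; on any bipartite graph such as $\mathbb{Z}$ the first equals $p_{2n}(x,x)$ since $p_{2n+1}(x,x)=0$, whereas the second is strictly smaller. So what you have actually derived is incompatible with the statement of Lemma \ref{Ledif} under the paper's definition of $D$; rather than flagging this, the final sentence silently reinterprets $D$ as a step-one difference. It is worth noting that the paper's proof contains a compensating slip in the opposite direction (the factor of two is inserted in the Chapman--Kolmogorov step, where it does not belong, instead of coming from the step size of $D$), which is why your correct intermediate line $f_{2n+k}(x)$ and the paper's $f_{2n+2i}(x)$ differ even though both arguments nominally arrive at the same conclusion. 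The right move here would have been to observe the mismatch and either correct the exponent in the lemma's statement or query the definition of $D$, not to assert the identification ``by definition.''
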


\begin{proof}
\[
\mathcal{E}_{m}\left( f_{n,}f_{n}\right) =\left( \left( I-P\right)
^{m}f_{n},f_{n}\right) 
\]%
but%
\begin{eqnarray*}
\left( \left( I-P\right) ^{m}f_{n},f_{n}\right) &=&\left(
\sum_{i=0}^{m}\left( -1\right) ^{i}\binom{m}{i}f_{n+i},f_{n}\right) \\
&=&\sum_{i=0}^{m}\left( -1\right) ^{i}\binom{m}{i}f_{2n+2i}\left( x\right) \\
&=&\left( -1\right) ^{m}\left( D^{m}f\right) _{2n}\left( x\right)
\end{eqnarray*}
\end{proof}

\begin{proof}
(of Proposition \ref{ppA}) Let $A\subset \Gamma $ be a finite set and choose 
$y^{\ast }$ so that 
\begin{eqnarray*}
p_{2n}\left( x,y^{\ast }\right) &:&=\min_{y\in A}p_{2n}\left( x,y\right) \\
p_{2n}\left( x,y^{\ast }\right) \sum_{z\in A}\mu \left( z\right) &\leq
&\sum_{z\in A}p_{2n}\left( x,z\right) \mu \left( z\right) \\
&\leq &\sum_{z\in \Gamma }P_{2n}\left( x,z\right) \leq 1,
\end{eqnarray*}%
and 
\[
p_{2n}\left( x,y^{\ast }\right) \leq \frac{1}{\mu \left( A\right) } 
\]%
follows. \ Let us denote $f_{n}\left( y\right) =p_{n}\left( x,y\right) $. \
By elementary estimates we have that

\begin{eqnarray*}
\frac{1}{2}p_{2n}^{2}\left( x,x\right) &\leq &p_{2n}^{2}\left( x,y^{\ast
}\right) +\left\vert p_{2n}\left( x,x\right) -p_{2n}\left( x,y^{\ast
}\right) \right\vert ^{2} \\
&\leq &\frac{1}{\mu ^{2}\left( A\right) }+\overline{R}_{m}\left( x,A\right) 
\mathcal{E}_{m}\left( f_{2n},f_{2n}\right) \\
&\leq &\frac{1}{\mu ^{2}\left( A\right) }+\overline{R}_{m}\left( x,A\right) 
\frac{C}{n^{m}}p_{2n}\left( x,x\right) ,
\end{eqnarray*}%
where in the last step Lemma \ref{pdiff} is used. \ Solving this for $%
p_{2n}\left( x,x\right) $ we obtain%
\begin{eqnarray}
p_{2n}\left( x,x\right) &\leq &C_{1}\frac{\overline{R}_{m}\left( x,A\right) 
}{t^{m}}+\left( \frac{2}{\mu ^{2}\left( A\right) }+C_{2}\frac{\overline{R}%
_{m}^{2}\left( x,A\right) }{t^{2m}}\right) ^{1/2} \\
&\leq &C\left( \frac{\overline{R}_{m}\left( x,A\right) }{n^{m}}+\frac{1}{\mu
\left( A\right) }\right) .
\end{eqnarray}%
The proof is finished by noting that $p_{2n+1}\left( x,x\right) \leq
p_{2n}\left( x,x\right) .$
\end{proof}

\subsection{\protect\bigskip The off-diagonal upper estimate}

The off-diagonal estimate can be easily obtained from the diagonal one. \ 

\begin{theorem}
\label{tUE}Assume $\left( p_{0}\right) ,\left( VD\right) _{\rho }$ and $%
\left( DUE\right) _{F}$ then 
\begin{eqnarray*}
p_{n}\left( x,y\right) &\leq &\frac{C}{V\left( x,f\left( x,n\right) \right) }%
\exp \left( -ck\left( x,n,r\right) \right) \\
&\leq &\frac{C}{V_{\rho }\left( x,f\left( x,n\right) \right) }\exp \left(
-c\left( \frac{F_{\rho }\left( x,d\left( x,y\right) \right) }{n}\right) ^{%
\frac{1}{\beta _{m}-1}}\right) .
\end{eqnarray*}%
The proof is word by word the same as for Theorem 8.5 in \cite{ln}\ or an
alternative proof is combination of Theorem 8.6 and 8.10 in \cite{ln}.
\end{theorem}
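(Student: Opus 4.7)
The plan is to combine the diagonal upper estimate from Theorem \ref{tDUE} with the exit-time tail (\ref{rPH}) supplied by Theorem \ref{TP}, via a chaining argument in the spirit of Grigoryan--Telcs and Barlow--Bass. Two regimes must be handled, a near-diagonal one and a far one.

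In the near-diagonal regime $\rho(x,y) \leq \delta f(x,n)$, the factor $\exp(-c k_m(x,n,\rho(x,y)))$ is bounded, and the claim reduces to (\ref{PUE1}). That bound follows directly from (\ref{DUE1}) by writing $p_{2n}(x,y) = \sum_{z} p_n(x,z)\, p_n(y,z)\, \mu(z)$ and applying Cauchy--Schwarz in $\ell^2(\Gamma,\mu)$ to get $p_{2n}(x,y) \leq \sqrt{p_{2n}(x,x)\, p_{2n}(y,y)}$; inserting (\ref{DUE1}) at both endpoints yields (\ref{PUE1}), and $(VD)_\rho$ converts $V_\rho(y,f(y,n))$ into $V_\rho(x, f(x,n))$ up to constants.

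In the far regime $r = \rho(x,y) > \delta f(x,n)$, pick a $\rho$-near-geodesic chain $x = z_0, z_1, \ldots, z_k = y$ with $\rho(z_i, z_{i+1}) \leq r/k$, where $k = k_m(x,n,r)$ is as in (\ref{kdef}). The event $\{X_n = y\}$ forces, via the strong Markov property, that in one of the $k$ subintervals of length $n/k$ the walk exits a ball of radius $r/k$ around some intermediate point. Applying (\ref{rPH}) at scale $r/k$ and time $n/k$ at each link supplies a factor $\exp\!\bigl(-c\,(H(x,r/k)/(n/k)^m)^{1/(\beta_m-1)}\bigr)$, which by $(VD)_\rho$ together with the choice of $k$ is an $\exp(-c)$ per segment; multiplying the $k$ contributions yields the target $\exp(-c\,k_m(x,n,r))$. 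The pre-factor is closed by combining (\ref{PUE1}) on the final Markov step into $y$ with $(VD)_\rho$ to return to $V_\rho(x,f(x,n))$. The explicit form in the second line of the theorem then follows by rewriting $k_m(x,n,r)$ through the Einstein relation (\ref{ERrr}): $(VD)_\rho$ gives $E_m(B_\rho(\,\cdot\,, r/k)) \asymp H(x,r)\,k^{-\beta_m}$, so (\ref{kdef}) forces $k \asymp (H(x,r)/n^m)^{1/(\beta_m-1)} \asymp (F(x,r)/n)^{1/(\beta_m-1)}$, matching the stated exponent.

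The main technical obstacle is the coupling between the pre-factor $1/V_\rho(x, f(x,n))$ and the exponential across the regime transition: naively, iterating Markov splittings introduces volumes $V_\rho(z_i, f(z_i, n/k))$ at the intermediate chain points $z_i$, and these depend on the unknown geometry between $x$ and $y$. Controlling them uniformly is precisely where the fully local character of $(VD)_\rho$ (i.e., $V_\rho \in W_0$) is used; it permits replacement of each intermediate volume by $V_\rho(x, f(x,n))$ at the cost of a multiplicative distortion that can be absorbed into the exponent by adjusting the constant $c$. This bookkeeping is the content of \cite{ln} Thm.~8.5, and, as the author notes, an alternative route is to derive an integrated (Gaffney-type) bound via the machinery of \cite{ln} Thms.~8.6 and 8.10 and then invert to the pointwise inequality.
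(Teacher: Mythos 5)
Your near-diagonal step is sound: the reversibility identity plus Cauchy--Schwarz gives $p_{2n}(x,y)\leq\sqrt{p_{2n}(x,x)\,p_{2n}(y,y)}$, and $(VD)_{\rho}$ converts $V_{\rho}(y,f(y,n))$ into $V_{\rho}(x,f(x,n))$. You have also correctly identified the two ingredients to be combined: the diagonal bound $(\ref{DUE1})$ and the exit-time tail $(\ref{rPH})$, with the final rewriting of $k_{m}$ via the Einstein relation. However, the far-regime chaining as written does not close. Applying $(\ref{rPH})$ at scale $r/k$ and time $n/k$ on each link yields a factor of the form $C\exp(-c\,k')$ in which the defining condition $(\ref{kdef})$ makes $k'$ of order one; since the front constant $C$ in $(\ref{rPH})$ is $\geq 1$, each per-segment factor is then $\geq 1$, and multiplying $k$ of them does not produce $\exp(-ck)$. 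Moreover the segment exit times $\tau_{i}$ are not independent, so one cannot multiply marginal tail bounds in any case. What actually drives the Barlow--Bass mechanism per link is the coarse linear bound of Lemma \ref{p<}, $\mathbb{P}(\tau<t)\leq p+at^{m}<1$, fed into an exponential-Chebyshev argument on $\sum\tau_{i}^{m}$; but that whole computation is precisely what the proof of Theorem \ref{TP} does to \emph{produce} $(\ref{rPH})$, so re-chaining at scale $r/k$ here is both redundant and, as executed, invalid.

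The route the paper intends (via \cite{ln} Theorem 8.5, in the Grigor'yan--Telcs pattern) uses $(\ref{rPH})$ as a black box through a \emph{single} Markov decomposition at the full scale: for $\rho(x,y)\asymp r$, write
$p_{n}(x,y)\leq\sum_{j\leq n}\mathbb{P}_{x}\bigl(T_{B(x,r/2)}=j,\;X_{j}=z\bigr)\,p_{n-j}(z,y)$,
split $j\leq n/2$ from $j>n/2$, apply $(\ref{DUE1})$ together with $(VD)_{\rho}$ to the long time-half to control $p_{n-j}(z,y)$ by $C/V_{\rho}(x,f(x,n))$, and sum the exit-time probabilities to get $\mathbb{P}_{x}(T_{B(x,r/2)}<n/2)\leq C\exp(-c\,k_{m}(x,n,r))$ directly from $(\ref{rPH})$, with the symmetric splitting handling the other time-half. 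Replacing your per-link iteration of $(\ref{rPH})$ by this one-shot decomposition closes the gap; your concluding identification $k_{m}(x,n,r)\asymp(F(x,r)/n)^{1/(\beta-1)}$ via $(\ref{ERrr})$ and the $W_{0}$ regularity of $F$ is then the correct last step (noting that the $\beta$ appearing here is the doubling exponent of $F$ rather than that of $H=F^{m}$, a point the paper itself elides).
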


\subsection{Lower estimates}

The next task is to show the Near Diagonal Lower Estimate $\left(
NDLE\right) _{F}:$ There are $\delta $ and $c>0$ such that , for all $x\in
\Gamma ,r>0,y\in B\left( x,r\right) ,n>0$ if $\rho \left( x,y\right) \leq
\delta f\left( x,n\right) $ then, 
\[
\widetilde{p}_{n}\left( x,y\right) \geq \frac{c}{V_{\rho }\left( x,f\left(
x,n\right) \right) }. 
\]

\begin{theorem}
\label{tNDLE}If $\left( \Gamma ,\mu \right) $ satisfies $\left( VD\right)
_{\rho }$ and $\left( DUE\right) _{F}$ then, $\left( NDLE\right) _{F}$ holds.
\end{theorem}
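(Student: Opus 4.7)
The plan is to combine the diagonal lower estimate $(\ref{DLE})$ already available with an oscillation bound for $\widetilde{p}_n(x,\cdot)$ around $x$. The oscillation bound is an immediate application of the key inequality $(\ref{key2})$, in which the Dirichlet energy on the right is controlled by DUE via the spectral identities of Lemmas \ref{pdiff} and \ref{Ledif}, and the resolvent quasi-metric on the right is controlled by $\rho^2$ via $(\ref{rrr})$.

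First I would fix $x$ and let $h(z)=\widetilde{p}_n(x,z)$. Since $\widetilde{p}_n$ is, up to a shift, the diagonal of the spectral resolution, Lemma \ref{Ledif} (applied to $h$ rather than $p_n$) together with Lemma \ref{pdiff} yields an estimate of the form
\[
\mathcal{E}_m(h,h)\ \leq\ \frac{C}{n^{m}}\,\widetilde{p}_{cn}(x,x),
\]
and then $(\ref{DUE1})$ of Theorem \ref{tDUE} gives
\[
\mathcal{E}_m(h,h)\ \leq\ \frac{C}{n^{m}\,V_\rho(x,f(x,n))}.
\]
Feeding this into $(\ref{key2})$ and using $R_m(x,y)\leq C\rho^2(x,y)\leq C\delta^2 f^2(x,n)$ when $\rho(x,y)\leq \delta f(x,n)$ produces
\[
|\widetilde{p}_n(x,x)-\widetilde{p}_n(x,y)|^2\ \leq\ \frac{C\delta^2\,f^2(x,n)}{n^{m}\,V_\rho(x,f(x,n))}.
\]

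Next I would exploit the crucial algebraic identity built into the definition of $F$: since $F(x,r)=(r^2V_\rho(x,r))^{1/m}$ and $f(x,\cdot)=F^{-1}(x,\cdot)$, one has $f^2(x,n)V_\rho(x,f(x,n))=n^m$, so the previous display simplifies to
\[
|\widetilde{p}_n(x,x)-\widetilde{p}_n(x,y)|\ \leq\ \frac{C\delta}{V_\rho(x,f(x,n))}.
\]
Combining this Hölder-type estimate with the DLE $\widetilde{p}_n(x,x)\geq c/V_\rho(x,f(x,n))$ (obtained from Proposition \ref{DLE} after absorbing the parity shift into the tilde) gives
\[
\widetilde{p}_n(x,y)\ \geq\ \frac{c-C\delta}{V_\rho(x,f(x,n))},
\]
and choosing $\delta$ so that $C\delta\leq c/2$ finishes the proof.

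The main obstacle, in my view, is not any single step but the bookkeeping around parity and shifts: one must verify that Lemmas \ref{pdiff} and \ref{Ledif}, which are stated for $f_n(y)=p_n(x,y)$, transfer cleanly to $h(z)=\widetilde{p}_n(x,z)$, that the power of $n$ coming out of the iterated finite-difference argument in Lemma \ref{pdiff} is exactly $n^m$ and not a shifted version, and that the diagonal value $\widetilde{p}_{cn}(x,x)$ appearing after applying Lemma \ref{pdiff} can be replaced by $\widetilde{p}_{2n}(x,x)$ by monotonicity so that $(\ref{DUE1})$ is applicable at the correct time scale. Once this matching is done, the identity $f^2(x,n)V_\rho(x,f(x,n))=n^m$ does all the actual work and $\delta$ can be chosen as a function of the constants in $(VD)_\rho$, $(DUE)_F$ and $(\ref{rrr})$ alone.
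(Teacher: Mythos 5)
Your plan reproduces the paper's argument almost exactly: inequality $(\ref{key2})$ for the oscillation, Lemmas \ref{pdiff} and \ref{Ledif} to control $\mathcal{E}_m$ by the diagonal value, the DUE to bound that diagonal value, and the cancellation $f^2(x,n)\,V_\rho(x,f(x,n))=n^m$ built into $F$. The one place where your route diverges from the paper, and where the real work you flagged would actually lie, is the parity handling. You propose to apply Lemmas \ref{pdiff} and \ref{Ledif} directly to $h(z)=\widetilde{p}_n(x,z)=p_n(x,z)+p_{n+1}(x,z)$, but those lemmas are stated for $f_n=p_n(x,\cdot)$ and their proofs rest on the spectral positivity of $(-1)^k f^{(k)}_{2n}(x)$ along even times; expanding $\mathcal{E}_m(h,h)$ produces cross terms $\mathcal{E}_m(f_n,f_{n+1})$ that the spectral argument does not control in the same sign-definite way, and the diagonal $\widetilde{p}_n(x,x)$ is not on the even time grid where Lemma \ref{pdiff} is proved. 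The paper sidesteps this by running the whole argument for $p_{2n}(x,y)$ only (so that $d(x,y)$ is even and the spectral lemmas apply verbatim), concluding $p_{2n}(x,y)\geq \tfrac12 p_{2n}(x,x)\geq c/V_\rho(x,f(x,n))$, and then obtains the odd case from a single step of the chain using $(p_0)$: $p_{2n+1}(x,y)\geq p_0\,p_{2n}(z,y)$ for a neighbour $z$ of $x$. That one-step patch is strictly easier to justify than reproving the spectral lemmas for $\widetilde{p}_n$, and it is the device that makes the $\widetilde{p}$ appear in the conclusion rather than in the intermediate estimates. With that substitution your plan matches the paper.
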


\begin{proof}
First we prove 
\[
p_{2n}\left( x,y\right) \geq \frac{c}{V_{\rho }\left( x,f\left( x,n\right)
\right) }. 
\]
for $x,y\in \Gamma $ satisfying $d\left( x,y\right) \equiv 0$ $\func{mod}2$. 
$\ $Let us choose $\,r$ such that $n=F_{\rho }\left( x,r\right) =\left[
r^{2}V_{\rho }\left( x,r\right) \right] ^{1/m}$ and denote $f_{2n}\left(
y\right) =p_{2n}\left( x,y\right) ,$ then

\[
\left\vert f_{2n}\left( x\right) -f_{2n}\left( y\right) \right\vert ^{2}\leq
R_{m}\left( x,y\right) \mathcal{E}_{m}\left( f,f\right) . 
\]%
By Lemma \ref{Ledif} we have that 
\[
\left\vert f_{2n}\left( x\right) -f_{2n}\left( y\right) \right\vert ^{2}\leq
R_{m}\left( x,y\right) \mathcal{E}_{m}\left( f_{2n},f_{2n}\right)
=R_{m}\left( x,y\right) \left( -1\right) ^{m}f_{4n}^{\left( m\right) } 
\]%
and by Lemma \ref{pdiff} and the diagonal upper estimate%
\begin{eqnarray*}
\left\vert f_{2n}\left( x\right) -f_{2n}\left( y\right) \right\vert ^{2}
&\leq &R_{m}\left( x,y\right) \frac{1}{n^{m}}p_{2n}\left( x,x\right) \\
&\leq &\frac{\overline{R}_{m}\left( x,\delta r\right) }{\left[ F_{\rho
}\left( x,r\right) \right] ^{m}}p_{2n}\left( x,x\right) \\
&\leq &C\frac{\delta r^{2}}{r^{2}V_{\rho }\left( x,r\right) }p_{2n}\left(
x,x\right) \leq \frac{1}{4}p_{2n}^{2}\left( x,x\right) ,
\end{eqnarray*}%
if $\delta $ is small enough. \ The above inequality means that%
\[
p_{2n}\left( x,y\right) \geq \frac{1}{2}p_{2n}\left( x,x\right) \geq \frac{c%
}{V_{\rho }\left( x,r\right) }. 
\]%
Finally%
\begin{eqnarray*}
p_{2n+1}\left( x,y\right) &=&p\left( x,z\right) \mu \left( z\right)
p_{2n}\left( z,y\right) \\
&\geq &p_{0}p_{2n}\left( z,y\right) \\
&\geq &\frac{c}{V_{\rho }\left( x,r\right) }.
\end{eqnarray*}
\end{proof}

\section{\protect\bigskip Stability}

In this section we show that $\left( VD\right) _{\rho }$ implies the
parabolic Harnack inequality via the two-sided estimates. \ It is an
interesting by-product that in our scenarios the volume doubling property
implies the elliptic Harnack inequality.

It is already known that $\left( UE_{F}\right) $ and $\left( NDLE_{F}\right) 
$ are equivalent to the $F$-parabolic Harnack inequality (c.f.. \cite{ln})
where $F$ is properly regular function and both imply $\left( VD\right)
_{\rho }.$

\begin{definition}
The function class $W_{1}$ is defined as follows. \ $F\in W_{1}$ if there
are $\beta \geq \beta ^{\prime }>1,C\geq c>0$ such that for all $R>r>0,x\in
\Gamma ,y\in B\left( x,R\right) ,$%
\begin{equation}
c\left( \frac{R}{r}\right) ^{\beta ^{\prime }}\leq \frac{F\left( x,R\right) 
}{F\left( y,r\right) }\leq C\left( \frac{R}{r}\right) ^{\beta }.
\end{equation}
\end{definition}

\begin{definition}
\label{dpH}We say that $\left( \mathbf{PH}\right) _{F}$, the parabolic
Harnack inequality holds \ for a weighted graph $\left( \Gamma ,\mu \right) $
with respect to a function $F\in W_{1}$ if there is a constant $C>0$ such
that for any $x\in \Gamma ,R,k>0$ and any solution $u\geq 0$ of the heat
equation 
\[
\partial _{n}u=\Delta u 
\]%
on $\mathcal{D}=[k,k+F(x,R)]\times B(x,2R)$, the following is true. On
smaller cylinders defined by 
\begin{eqnarray*}
\mathcal{D}^{-} &=&[k+\frac{1}{4}F(x,R),k+\frac{1}{2}F(x,R)]\times B(x,R),%
\text{ } \\
\text{and }\mathcal{D}^{+} &=&[k+\frac{3}{4}F(x,R),k+F(x,R))\times B(x,R),
\end{eqnarray*}%
and taking $(n_{-},x_{-})\in \mathcal{D}^{-},(n_{+},x_{+})\in \mathcal{D}%
^{+},$%
\begin{equation}
d(x_{-},x_{+})\leq n_{+}-n_{-},  \label{smdist}
\end{equation}%
the inequality 
\[
u_{n_{-}}(x_{-})\leq C\widetilde{u}_{n_{+}}(x_{+}) 
\]%
holds, where the short notation $\widetilde{u}_{n}=u_{n}+u_{n+1}$ was used.
\end{definition}

\begin{remark}
At present it is not clear how the $\beta ^{\prime }>1$ condition follows
from the assumptions, while we expect it holds if $\mu $ is $\left(
VD\right) _{\rho }$.
\end{remark}

\begin{definition}
We say that the elliptic Harnack inequality, $\left( H\right) $ holds with
respect to $\mu ,$ and $\rho $ if there is a $C>0$ \ such that for all $x\in
\Gamma ,$ $r>0$ if $\ h$ is harmonic on $B_{\rho }\left( x,2r\right) :$%
\[
\left( I-P\right) h\left( x\right) =0\text{ for }x\in B\left( x,2r\right) 
\]%
then 
\[
\max_{z\in B\left( x,r\right) }h\left( z\right) \leq C\min_{y\in B\left(
x,r\right) }h\left( y\right) . 
\]
\end{definition}

\begin{theorem}
\label{tPH} Assume that $\left( \Gamma ,\mu \right) $ satisfies $\left(
p_{0}\right) $, then the following statements are equivalent.\newline
\newline
1. $\left( VD\right) _{\rho },$ and $F\in W_{1}$\newline
2. $\left( UE\right) _{F}$ \ and $\left( NDLE\right) _{F}$ hold for an $F\in
W_{1},$\newline
3. $\left( PH\right) _{F}$ holds for $F\in W_{1}$ with respect to $\rho $%
-balls.
\end{theorem}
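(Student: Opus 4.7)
The plan is to prove the theorem by establishing the cycle of implications $(1)\Rightarrow(2)\Rightarrow(3)\Rightarrow(1)$, each step relying mainly on work already done in the paper or on standard arguments in the cited references \cite{ln}, \cite{BCK}, \cite{Ku1}, adapted to the $m$-resolvent framework.

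For $(1)\Rightarrow(2)$, everything has been prepared: Theorem \ref{tDUE} gives the diagonal upper bound $(DUE)_F$ under $(VD)_\rho$, Theorem \ref{tUE} upgrades this to the off-diagonal upper estimate $(UE)_F$, and Theorem \ref{tNDLE} delivers the near diagonal lower estimate $(NDLE)_F$. The scaling function is $F(x,r)=[r^2V_\rho(x,r)]^{1/m}$. The only real work here is to check that this $F$ belongs to $W_1$, that is, that $F$ satisfies a two-sided power law in the ratio $R/r$ uniformly along balls. The upper bound $F(x,R)/F(y,r)\le C(R/r)^\beta$ is immediate from $(VD)_\rho$ together with the inclusion $B_\rho(y,r)\subset B_\rho(x,2R)$ and the definition of $\beta_m$ in $(\ref{vv})$, while the lower comparison $F(x,R)/F(y,r)\geq c(R/r)^{\beta'}$ with $\beta'>1$ requires the trivial monotonicity $F(x,R)\geq F(x,r)$ combined with the reverse doubling that comes out of the Einstein-type relation $(\ref{ERrr})$; this is the point flagged in the remark preceding the theorem and will need the extra care of distinguishing $F$ from $H$ (note $H(x,r)=r^2V_\rho(x,r)$ automatically grows, but we need the $1/m$-th root to still beat linearly, which forces $\beta'>1$ after choosing $m$ appropriately).

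For $(2)\Rightarrow(3)$, I would directly invoke the equivalence proved in \cite{ln}: once one has $(UE)_F$ together with $(NDLE)_F$ for a regular scaling function $F\in W_1$, the $F$-parabolic Harnack inequality of Definition \ref{dpH} holds by the standard oscillation/chaining argument (iterating $(NDLE)_F$ forward in time along a sequence of balls obeying the space--time constraint $(\ref{smdist})$, and using $(UE)_F$ to control the complement). Since the off-diagonal upper estimate proved in Theorem \ref{tUE} is already written in the exponent form $\exp(-c(F_\rho(x,d(x,y))/n)^{1/(\beta_m-1)})$ matching the assumptions of \cite{ln}, no modification is required beyond keeping track that the underlying balls are $\rho$-balls and not graph-distance balls.

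The direction $(3)\Rightarrow(1)$ is the standard route back and will be the main obstacle. From $(PH)_F$ one first extracts the on-diagonal estimates by applying the inequality to the heat kernel $u_n(y)=p_n(z,y)$: comparing $p_{n_-}(z,x_-)$ with $\tilde p_{n_+}(z,x_+)$ on appropriate cylinders produces $p_n(z,z)\asymp 1/V_\rho(z,f(z,n))$, whence volume doubling follows by the classical Grigor'yan--Telcs type argument (integrating the heat kernel over a ball and using conservativeness to reverse $V_\rho(x,r)/V_\rho(x,2r)$ into a lower bound). The regularity $F\in W_1$ follows because the Harnack inequality itself encodes the space-time scaling through its time cylinders of length $F(x,R)$, so iteration between different scales yields the two-sided power comparison. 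The delicate point, again, is the lower power $\beta'>1$: this is where a genuine new idea may be needed, and I would follow the strategy of proving a reverse volume doubling from $(PH)_F$ applied to solutions supported on annuli, similar to Hebisch--Saloff-Coste, rather than trying to deduce it directly from $(VD)_\rho$.
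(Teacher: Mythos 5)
Your overall architecture --- a cycle $(1)\Rightarrow(2)\Rightarrow(3)\Rightarrow(1)$, with $(1)\Rightarrow(2)$ drawn from Theorems \ref{tDUE}, \ref{tUE}, \ref{tNDLE} and the remaining two implications imported from the general scaling-function machinery of \cite{ln} --- is exactly the route the paper intends; the paper's own proof is simply the citation ``reproduce the proof of \cite{ln} Theorem 12.1,'' which establishes that cycle. So structurally you are on target.

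However, there is a conceptual misreading that leads you to manufacture work that is not there, and then to fill that imaginary gap with an argument the paper explicitly disavows. Observe that $F\in W_{1}$ appears as a \emph{hypothesis} in all three statements of the theorem: statement 1 is ``$(VD)_{\rho}$ and $F\in W_{1}$,'' statement 2 is ``$(UE)_{F}$ and $(NDLE)_{F}$ hold for an $F\in W_{1}$,'' and statement 3 is ``$(PH)_{F}$ holds for $F\in W_{1}$.'' The regularity class membership is therefore a standing assumption that is carried unchanged around the cycle; none of the three implications is asked to \emph{derive} it. You instead declare that ``the only real work'' in $(1)\Rightarrow(2)$ is to verify $F\in W_{1}$, and you propose to extract the lower power bound $F(x,R)/F(y,r)\geq c(R/r)^{\beta'}$ with $\beta'>1$ from the Einstein relation $(\ref{ERrr})$ together with ``reverse doubling'' and an appropriate choice of $m$. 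This directly contradicts the remark the author placed immediately before the theorem: ``At present it is not clear how the $\beta'>1$ condition follows from the assumptions, while we expect it holds if $\mu$ is $(VD)_{\rho}$.'' In other words, the paper explicitly leaves this open and sidesteps it by making $F\in W_{1}$ a shared hypothesis; your sketch asserts (without an actual argument) that the open problem is solvable by adjusting $m$. That step would not survive scrutiny and is not needed anyway.

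The same confusion resurfaces in your discussion of $(3)\Rightarrow(1)$, where you flag the derivation of $\beta'>1$ as ``where a genuine new idea may be needed'' and propose an annulus argument in the style of Hebisch--Saloff-Coste. Again, $F\in W_{1}$ is already part of statement 3, so all $(3)\Rightarrow(1)$ must produce is $(VD)_{\rho}$, which does follow by the standard route (comparing $p_{n}(z,z)$ against the ball volume via the two-sided diagonal control encoded in $(PH)_{F}$); no reverse-doubling-from-Harnack argument is required. Once you strip out the spurious $W_{1}$-verification, your sketch of $(1)\Rightarrow(2)$ reduces to a direct application of Theorems \ref{tDUE}, \ref{tUE}, \ref{tNDLE}, and the remaining implications are the ones the paper delegates wholesale to \cite{ln} Theorem 12.1; at that point your proposal and the paper's proof coincide.
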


\begin{proof}
The proof can be reproduced from the one of \cite{ln} Theorem 12.1.
\end{proof}

\begin{theorem}
\label{TH}Assume that $\left( \Gamma ,\mu \right) $ satisfies $\left(
p_{0}\right) $, \ then $\left( VD\right) _{\rho }$ and $F\in W_{1}$ implies $%
\left( H\right) .$
\end{theorem}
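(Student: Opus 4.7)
The natural route is through the parabolic Harnack inequality already established in Theorem~\ref{tPH}. Under $(VD)_\rho$ together with the existence of an $F \in W_1$, that theorem yields $(PH)_F$, and the elliptic Harnack inequality $(H)$ is a classical consequence of a parabolic Harnack inequality. The mechanism is to apply $(PH)_F$ to a harmonic function viewed as a stationary (time-independent) solution to the heat equation.

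First I would observe that if $h \geq 0$ is harmonic on $B_\rho(x_0, 2r)$, i.e.\ $\Delta h \equiv 0$ there, then $u_n(y) := h(y)$ satisfies $\partial_n u = 0 = \Delta u$ on every cylinder $[k, k+F(x_0, R)] \times B_\rho(x_0, 2R)$ with $R \leq r$. Hence $u$ is a legitimate non-negative solution of the heat equation on the space-time domain required by Definition~\ref{dpH}. Moreover $\widetilde{u}_n = u_n + u_{n+1} = 2h$, so the conclusion of $(PH)_F$ applied to $u$ reads, after a harmless factor of $2$, as the pointwise comparison $h(x_-) \leq C\, h(x_+)$ for admissible pairs $(n_-, x_-) \in \mathcal{D}^-$ and $(n_+, x_+) \in \mathcal{D}^+$.

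Next, I would choose $R = r$, matching the outer spatial ball $B_\rho(x_0, 2R)$ with the ball of harmonicity. For arbitrary $x_-, x_+ \in B_\rho(x_0, r)$ one then selects time labels $n_- \in [\tfrac14 F(x_0, r), \tfrac12 F(x_0, r)]$ and $n_+ \in [\tfrac34 F(x_0, r), F(x_0, r)]$ with the gap $n_+ - n_-$ as large as $F(x_0, r)$. Provided the compatibility condition $d(x_-, x_+) \leq n_+ - n_-$ can be secured, a single application of $(PH)_F$ gives $h(x_-) \leq C h(x_+)$; taking supremum over $x_-$ and infimum over $x_+$ yields $(H)$.

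The hard part will be guaranteeing the compatibility condition $d(x_-, x_+) \leq n_+ - n_-$, because this condition mixes the graph distance $d$ with the $\rho$-scale $F(x_0, r)$. Under $(VD)_\rho$, $(p_0)$ and $F \in W_1$, one can control $d(x_-, x_+)$ by a constant multiple of $F(x_0, r)$ using the connection between the mean exit time $E_\rho$ and $F$ supplied by Theorem~\ref{TER}, together with the fact that $(p_0)$ forces edge weights to be visible in single random-walk steps. If $d(x_-, x_+)$ nonetheless exceeds $\tfrac14 F(x_0, r)$, one instead chains $(PH)_F$ along a graph geodesic $x_- = z_0, z_1, \ldots, z_K = x_+$, applying parabolic Harnack at each $z_i$ on a small ball staying inside $B_\rho(x_0, 2r)$; the bounded covering consequence of $(VD)_\rho$ (together with $F \in W_1$) bounds $K$, and hence the accumulated constant, uniformly. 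Once the time-distance compatibility is secured in either fashion, $(H)$ follows immediately.
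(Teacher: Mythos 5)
Your proposal follows the same route as the paper: invoke Theorem~\ref{tPH} to obtain $(PH)_F$ from $(VD)_\rho$ and $F \in W_1$, then specialize the parabolic Harnack inequality to time-independent solutions to extract $(H)$. The paper dismisses the second step as ``evident''; your elaboration of the compatibility constraint $d(x_-,x_+)\le n_+-n_-$ and the chaining fallback is a reasonable fleshing-out of that claim, not a departure from it.
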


\begin{proof}
It is evident that $\left( PH\right) _{F}$ \ implies $\left( H\right) $ and
from Theorem \ref{tPH} we know that $\left( VD\right) _{\rho }$ and $F\in
W_{1}$ implies $\left( PH\right) _{F}$.\newline
\end{proof}

\begin{definition}
Two weighted graphs $\Gamma $ with $\mu $ and $\Gamma ^{\prime }$ with $\mu
^{\prime }$ are roughly isometric (or quasi isometric) with respect to the
metrics $d,d^{\prime }$ (c.f. \cite[Definition 5.9]{HK}) if there is a map $%
\phi $ from $\Gamma $ to $\Gamma ^{\prime }$ such that there are $a,b,c,M>0$
for which
\end{definition}

\begin{equation}
\frac{1}{a}d\left( x,y\right) -b\leq d^{\prime }\left( \phi \left( x\right)
,\phi \left( y\right) \right) \leq ad\left( x,y\right) +b  \label{r1}
\end{equation}%
for all $x,y\in \Gamma ,$

\begin{equation}
d^{\prime }\left( \phi \left( \Gamma \right) ,y^{\prime }\right) \leq M
\label{r2}
\end{equation}%
for all $y^{\prime }\in \Gamma ^{\prime }$ and 
\begin{equation}
\frac{1}{c}\mu \left( x\right) \leq \mu ^{\prime }\left( \phi \left(
x\right) \right) \leq c\mu \left( x\right)  \label{r3}
\end{equation}%
for all $x\in \Gamma .$

\begin{theorem}
\label{stab}The $F$-parabolic Harnack inequality is rough isometry invariant
with respect to $\rho $ and $\rho ^{\prime }$.
\end{theorem}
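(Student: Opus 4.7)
The plan is to apply Theorem \ref{tPH} at both ends. That theorem characterizes $(PH)_F$ intrinsically by $(VD)_\rho$ together with $F\in W_1$, so it suffices to verify that both of these conditions are rough isometry invariants. Then a transported function $F' \in W_1$ on $(\Gamma',\mu',\rho')$ satisfying $(VD)_{\rho'}$ exists, and a second application of Theorem \ref{tPH} to the target graph yields $(PH)_{F'}$, as required.

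First I would transfer volume doubling. From (r1), pushing forward $\rho$-balls via $\phi$ gives the inclusion
\[
\phi\bigl(B_\rho(x, (r-b)/a)\bigr) \subset B_{\rho'}(\phi(x), r),
\]
while using $M$-density (r2), every point of $B_{\rho'}(\phi(x), r)$ lies within $\rho'$-distance $M$ of $\phi\bigl(B_\rho(x, a(r+M)+ab)\bigr)$. The weight comparison (r3) together with $(p_0)$, which forces $\mu(x) \asymp \mu(y)$ for $x\sim y$ and hence controls the total mass in any $M$-neighborhood, yields $V_{\rho'}(\phi(x), r) \asymp V_\rho(x, r)$ up to a radius rescaling depending only on $a,b,M$. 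Volume doubling of $V_\rho$ therefore transfers to $V_{\rho'}$ for radii above a threshold $r_0(a,b,M)$.

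Next I would define the target scaling function by setting $F'(\phi(x), r) := F(x, r)$ on the image of $\phi$, and extending to a general $y' \in \Gamma'$ by choosing a nearest representative in $\phi(\Gamma)$ (which exists by (r2)). The defining two-sided bound of $W_1$ is inherited from $F$ by translating pairs $(x', y') \in \Gamma' \times \Gamma'$ to pairs in $\phi(\Gamma)$ at $\rho'$-distance at most $M$, pushing back through $\phi$ using (r1), and then applying the $W_1$ inequality for $F$ on $(\Gamma, \rho)$. The additive shifts in radii produced by (r1) are absorbed by the volume doubling bounds obtained in the previous step, so the exponents $\beta, \beta'$ carry over unchanged and only the constants $c, C$ are modified.

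The main obstacle is the short-scale regime, at radii comparable to or smaller than $b+M$, where the additive errors of the rough isometry dominate the multiplicative inequality and the transport definition of $F'$ is not meaningful. In that regime I would appeal to $(p_0)$ and the local finiteness forced by connectedness: volumes, mean exit times, and resolvent quantities at bounded radius are uniformly bounded above and below, so $(VD)_{\rho'}$ and $F' \in W_1$ hold there trivially after enlarging constants. Once both conditions are verified on $(\Gamma',\mu',\rho')$, Theorem \ref{tPH} applied on the target graph completes the proof of $(PH)_{F'}$.
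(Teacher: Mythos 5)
Your proposal takes essentially the same approach as the paper: both reduce $(PH)_F$ to the conjunction $(VD)_\rho$ and $F\in W_1$ via the characterization in Theorem \ref{tPH}, transfer those conditions across the rough isometry, and then apply Theorem \ref{tPH} again on the target graph. The paper simply asserts the transfer of $(VD)$ as known, whereas you fill in the standard ball-comparison and short-scale arguments; this is a useful elaboration but not a different route.
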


\begin{proof}
We know that $\left( VD\right) _{\rho }$ on $\left( \Gamma ,\mu \right) $ \
if and only if $\left( VD\right) _{\rho ^{\prime }}$ on $\left( \Gamma ,\mu
\right) .$ \ The equivalence of $\left( VD\right) _{\rho }$ and $\left(
PH\right) _{F}$ on both graphs are given by Theorem \ref{tPH} and the
statement follows.
\end{proof}

\subsection{Comments}

Kigami in \cite{K1} constructed a metric which is quasisymmetric to \ $%
RV_{R}\left( x,d\left( x,y\right) \right) +RV_{R}\left( x,d\left( y,x\right)
\right) $ . This procedure can be applied to $\rho ^{2}\left( x,y\right)
V_{\rho }\left( x,\rho \left( x,y\right) \right) +\rho ^{2}\left( x,y\right)
V_{\rho }\left( y,\rho \left( x,y\right) \right) $ \ as well. All the
conditions are satisfied to obtain a new metric $\sigma $ which is quasi
symmetric to $\rho $. \ We know that $\left( VD\right) _{\rho }$ implies $%
\left( VD\right) _{\sigma }$ and by Kigami's result \cite{K1} we have 
\begin{eqnarray*}
p_{n}\left( x,y\right) &\leq &\frac{C}{V_{\sigma }\left( x,g^{-1}\left(
n\right) \right) }\exp \left( -c\left( \frac{\sigma \left( x,y\right) }{n}%
\right) ^{\frac{1}{\beta -1}}\right) \\
\widetilde{p}_{n}\left( x,y\right) &\geq &\frac{c}{V_{\sigma }\left(
x,g^{-1}\left( n\right) \right) },
\end{eqnarray*}

where $g^{-1}\left( n\right) $ the inverse of $g\left( r\right)
=r^{a}V_{\sigma }\left( x,r\right) $ and $a$ is the exponent determined by
construction of $\sigma $ from $\rho .$ The lower estimate and parabolic
Harnack inequality follows for $\sigma $ as well. \ It should be noted here,
that with the introduction of the second new metric the dependence from $x$
in the exponential term is eliminated and $F\left( x,r\right) $ replaced by $%
g\left( r\right) $.

\section{Examples}

There are several examples of fractals and fractal like graph possessing
two-sided heat kernel estimates (see \cite{Bwhich}) and examples on which
direction dependence destroy it \cite{HK}. The major contribution of
resistance metric in showing heat kernel estimates is emphasized in \cite%
{BCK} and \cite{K1}. \ The same arguments apply to our work. \ 

\begin{example}
The graphs in Example 4. \cite{BCK}, are strongly recurrent. \ There, $\beta
>\alpha $ i.e. strong recurrence is assumed, that is not needed anymore. For
instance the high dimensional graphical Sierpinski carpet, can be handled by
our method.
\end{example}

Kigami constructed $\mathcal{G}$ a family of fractal structures in \cite{K1}%
. \ The structures can be discretized and get the so called graphical
Sierpinski gaskets, graphs. \ The typical structures are recurrent but it is
easy to lift up them and get transient graphs. \ In \cite{Bwhich} Barlow has
Proposition 5 as follows.

\begin{proposition}
Let $\alpha \geq 1$ and a graph $\left( \Gamma ,\mu \right) \in \mathcal{G}$
which satisfies $V\left( x,r\right) \simeq r^{\alpha }$ and $E\left(
x,r\right) \simeq r^{\beta }$ \ with respect to the shortest path metric,
furthermore the graph is very strongly recurrent (see the definition there).
\ Let $\lambda >0.$ \ Then there is a weighted graph $\left( \widetilde{%
\Gamma },\widetilde{\mu }\right) $ such that $V\left( x,r\right) \simeq
r^{\alpha +\lambda }$ but $E\left( x,r\right) \simeq r^{\beta }$ and
satisfies the elliptic Harnack inequality.
\end{proposition}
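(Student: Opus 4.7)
The plan is to follow Barlow's construction in \cite{Bwhich}, and then to verify the elliptic Harnack inequality by invoking the framework of the present paper, in particular Theorem \ref{TH}. First I would construct the weighted graph $(\widetilde{\Gamma},\widetilde{\mu})$ by attaching to each vertex $x \in \Gamma$ a \emph{decoration} $T_x$: a thin (e.g.\ tree-like) subgraph whose cardinality within graph distance $r$ of $x$ grows like $r^{\lambda}$. Set $\widetilde{\Gamma} = \Gamma \cup \bigcup_{x \in \Gamma} T_x$, glue $T_x$ to $x$ by a single edge, and keep the original edge weights on $\Gamma$ unchanged. The conductances inside and leading into $T_x$ are then taken small enough so that the random walk makes only short excursions into the decorations while still contributing $\asymp r^{\lambda}$ points to the counting measure of balls at scale $r$.

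Second, I would verify $V_{\widetilde{\mu}}(x,r) \asymp r^{\alpha+\lambda}$ with respect to the shortest-path metric. This is a direct counting argument: the ball at scale $r$ contains $\asymp r^{\alpha}$ skeleton vertices from $\Gamma$, and each of these carries a decoration contributing $\asymp r^{\lambda}$ mass, giving the claimed exponent after summation (where weight choices on the decoration edges are tuned so that $\widetilde{\mu}$ acts as counting up to constants).

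Third, I would verify $E(x,r) \asymp r^{\beta}$ in $\widetilde{\Gamma}$. Decompose $T_{B(x,r)}$ in $\widetilde{\Gamma}$ into the time spent on the skeleton $\Gamma$ and the time spent on excursions into the decorations. The first part is $\asymp r^{\beta}$ by the assumption on $\Gamma$, while the small conductances into $T_x$ make the excursion contributions a bounded multiple of the skeleton time. Comparing Dirichlet forms $\mathcal{E}_{\widetilde{\Gamma}}$ and $\mathcal{E}_{\Gamma}$ together with Lemma \ref{LE<ggm} and Theorem \ref{TERm} applied to both graphs gives matching two-sided bounds.

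Fourth, I would derive the elliptic Harnack inequality via the present paper's machinery instead of Barlow's direct argument. It suffices, by Theorem \ref{TH}, to verify $(VD)_{\rho}$ for the resolvent metric $\rho$ of $\widetilde{\Gamma}$ and membership of the associated scaling function in $W_{1}$. Since the resolvent metric is controlled, via $(\ref{rrr})$ and $(\ref{Rmr2})$, by the quantity $R_{m}(x,B^c)$, which by the Einstein-type relation $(\ref{ERm})$ is equivalent to $E_{m}(x,r)/V_{\widetilde{\mu}}(x,r)$, the estimates of the previous step transfer volume doubling from the shortest-path metric to $\rho$. The main obstacle will be exactly this transfer: the resolvent metric is sensitive to the full Dirichlet form, so one has to show that the attached decorations do not distort $R_{m}$ beyond bounded factors. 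This requires comparing test functions that are essentially constant on each $T_x$ against the minimizers on the skeleton, and exploiting the smallness of the decoration conductances so that restricting to $\Gamma$ and extending by the value at the attachment point is almost optimal in $\mathcal{E}_{m}$.
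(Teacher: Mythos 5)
You have constructed a different graph than the one the proposition is actually about, and the new graph probably does not have the property you need it to have. The paper does not prove this proposition at all: it is a verbatim citation of Barlow's Proposition~5 in \cite{Bwhich}, and the only content the paper adds is the sentence immediately after it --- that $(\widetilde{\Gamma},\widetilde{\mu})$ is the \emph{product} of $(\Gamma,\mu)$ with an ultrametric space, and that volume doubling in the resolvent metric can then be read off from the asymptotic spherical symmetry of the Green kernel on that product. Your construction --- gluing a dangling tree-like decoration $T_x$ onto each $x\in\Gamma$ by a single low-conductance edge --- is not that construction, and the distinction matters.

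The difficulty is precisely the elliptic Harnack inequality, the one property Barlow's product construction is designed to preserve and yours is likely to destroy. To make $V_{\widetilde{\mu}}(x,r)\asymp r^{\alpha+\lambda}$ with $\lambda>0$, each $T_y$ must have unbounded depth (a tree with $\asymp r^{\lambda}$ vertices within distance $r$ must contain vertices at arbitrarily large distance from the attachment point). Consequently decorations rooted at vertices of $B(x,r)$ will pierce $\partial B(x,2r)$, and a nonnegative function harmonic inside $B(x,2r)$ may be prescribed arbitrarily on the part of $\partial B(x,2r)$ lying in $T_y$. Whether the resulting oscillation inside $B(x,r)\cap T_y$ stays bounded depends delicately on the geometry and conductance profile of the trees; choosing conductances ``small enough'' to keep the exit time at $r^{\beta}$ is exactly the kind of tuning (decaying conductances along a dead-end branch) that makes harmonic functions decay fast along the branch and ruins the Harnack constant. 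This attach-a-decoration mechanism is the standard recipe for graphs that \emph{fail} EHI; Barlow's product with an ultrametric space avoids it because every slice over a skeleton vertex looks identical, so there is no preferred dead end along which a harmonic function can be pushed to zero.

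There is a second, logical, gap in step four. You want to obtain $(VD)_{\rho}$ on $\widetilde{\Gamma}$ by routing through $(\ref{rrr})$, $(\ref{Rmr2})$ and $(\ref{ERm})$. But $(\ref{Rmr2})$ and $(\ref{ERm})$ are proved in the paper \emph{under the hypothesis} $(VD)_{\rho}$ (or bounded covering), so invoking them as a means to establish $(VD)_{\rho}$ is circular. The paper sidesteps this on Barlow's graphs by appealing to the near-spherical symmetry of the Green kernel of the product, which gives direct control on $R_{m}(x,y)$ and hence on $\rho$-balls without first assuming the doubling you are trying to prove; your decorated graph has no such symmetry to exploit, and ``restricting to $\Gamma$ and extending by the value at the attachment point'' does not by itself compare $\mathcal{E}_m$ (which involves $(I-P)^m$, not the single-step form) across the two graphs.
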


The graph $\left( \widetilde{\Gamma },\widetilde{\mu }\right) $ is product
of $\left( \Gamma ,\mu \right) $ and an ultrametric space. \ The choice of $%
\lambda >0$ can ensure that $\beta <\alpha +\lambda .$ \ From Proposition 3
of \cite{Bwhich} we know that such graphs are transient hence the resistance
metric is not applicable, while the resolvent metric does if volume doubling
holds for it. \ Polynomial volume growth in the resolvent metric follow from
the asymptotic spherical symmetry of the Green kernel and from polynomial
volume growth in the original metric.

\end{document}